\documentclass[smallextended,referee,envcountsect,]{svjour3}
\smartqed

\usepackage{amsmath, amssymb, amsthm}
\usepackage[a4paper, margin=2.5cm]{geometry}  
\usepackage{graphicx}
\usepackage{comment}
\usepackage{placeins}
\usepackage{enumerate}
\usepackage{hyperref}

\usepackage[short]{optidef}	

\usepackage{xcolor}

\newcommand{\R}{\mathbb{R}}
\newcommand{\A}{\mathcal{A}}

\newcommand{\dd}{\mathrm{d}}

\newcommand{\I}{{\mathcal{I}}}
\newcommand{\J}{{\mathcal{J}}}

\newcommand{\B}{{\mathcal{B}}}

\newcommand{\BNLP}{{\mathrm{BNLP}}}

\definecolor{darkgreen}{rgb}{0.0,0.5,0.0}

\journalname{-}

\begin{document}

\title{Local Convergence Results for Sequential Quadratic Programming with Complementarity Constraints}

\author{Armin Nurkanovi\'c}

\institute{A. Nurkanovi\'c, Corresponding author  \at
             University of Freiburg\\
              Freiburg, 79110 Freiburg, Germany\\
              armin.nurkanovic@imtek.uni-freiburg.de
}

\date{Received: 04.2026 / Accepted: date}

\maketitle

\begin{abstract}
Mathematical programs with complementarity constraints (MPCCs) are a challenging class of nonlinear optimization problems, because their nonlinear programming reformulations violate standard constraint qualifications at every feasible point.
This paper analyzes sequential quadratic programming with complementarity constraints (SQPCC). 
In this method, the complementarity constraints are retained in the subproblems, yielding quadratic programs with complementarity constraints (QPCCs). 
Therefore, SQPCC is a reasonable extension of classical sequential quadratic programming (SQP) to MPCCs. 
The main contribution of the paper is a new local convergence result for the SQPCC method to S-stationary points. 
We show that there exists at least one sequence of QPCC S-stationary points converging to a reference S-stationary point of the MPCC, and we further characterize conditions under which each such sequence converges and under which such a sequence is locally unique.
In contrast to previous results, the analysis is established under weaker second-order sufficient conditions and requires neither classical strict complementarity nor upper-level strict complementarity. 
Our result builds upon local convergence results for the classical SQP method. 
In addition, we present improved local convergence results for classical SQP within the kappa-omega setting, which also cover linearly convergent variants. 
The kappa-omega assumptions, which are widely used in the analysis of Newton-type methods, provide a natural framework for quantifying subproblem approximation errors, their effect on the convergence speed, and the effect of nonlinearity on the size of the local convergence region. 
Furthermore, we establish an active-set stabilization result for SQPCC, identifying conditions under which the optimal complementarity and inequality active sets are identified after finitely many iterations, and conditions under which such identification occurs only asymptotically. 
Numerical examples illustrate the theoretical findings and highlight some advantages of SQPCC over classical SQP applied to MPCCs.
\end{abstract}
\keywords{MPCC \and nonlinear programming \and complementarity constraints \and sequential quadratic programming}
\subclass{90C30 \and 90C33 \and 49M37 \and 65K10\and 90C11  }


\section{Introduction}\label{sec:intro}
In this paper, we study the local convergence properties of a method for computing a stationary point of a mathematical program with complementarity constraints (MPCC):
\begin{mini!}[2]
	{\substack{w\in \R^{n}}}{f(w)\label{eq:mpcc_intro_obj}}
	{\label{eq:mpcc_intro}}{}
	\addConstraint{h(w)}{=0 \label{eq:mpcc_intro_eq}}
	\addConstraint{g(w)}{\leq 0 \label{eq:mpcc_intro_ineq}}
	\addConstraint{0 \leq G(w) \perp H(w)}{\geq 0, \label{eq:mpcc_intro_comp}}
\end{mini!}
where $f:\R^n \to \R$ is the objective function, $h:\R^n \to \R^{m_h}$ defines the equality constraints, and $g:\R^n \to \R^{m_g}$ defines the inequality constraints.
The functions $G,H:\R^n \to \R^m$ define the complementarity constraints. 
All problem functions are assumed to be twice continuously differentiable.
The notation in Eq. \eqref{eq:mpcc_intro_comp} means that $G(w), H(w) \geq 0$ and that $G_i(w)H_i(w) = 0$ for $i = 1,\ldots,m$.
By replacing~\eqref{eq:mpcc_intro_comp} with these conditions, the MPCC~\eqref{eq:mpcc_intro} becomes a standard nonlinear program (NLP). 
However, this reformulation, as well as other equivalent NLP reformulations; cf.~Sec.~\ref{sec:mpcc_theory}, violates standard constraint qualifications such as the Mangasarian--Fromovitz constraint qualification (MFCQ) at every feasible point~\cite{Jane2005,Scheel2000}. 
This obstructs the direct application of classical NLP theory and algorithms~\cite{Luo1996,Scheel2000}. 
Nevertheless, MPCCs admit a rich tailored theory and a broad range of numerical methods that address the limitations of standard NLP approaches; see, for example,~\cite{Kim2020,Luo1996,Scheel2000}.

Complementarity constraints are an expressive modeling tool, and MPCCs arise in many applications, including, but not limited to, bilevel optimization~\cite{Kim2020}, process engineering~\cite{Biegler2010}, robotics with contact and friction~\cite{Posa2014,Nurkanovic2024}, optimal control of hybrid dynamical systems~\cite{Nurkanovic2023f}, and optimal power flow~\cite{Rosehart2005}. 
Closely related problem classes include mathematical programs with vanishing constraints~\cite{Achtziger2008} and sparsity optimization problems~\cite{Feng2018,Kanzow2024}, both of which can be reformulated into an MPCC.

Most MPCC-tailored algorithms either regularize the complementarity constraints or exploit their piecewise structure, and then solve a (finite) sequence of standard NLPs converging to a stationary point of the original MPCC. 
Easy-to-implement representatives include penalty, smoothing, and regularization methods~\cite{Scholtes2001,Steffensen2010,Kanzow2013,Ralph2004,Raghunathan2005,Kadrani2009,Pozharskiy2026}; surveys comparing their numerical and theoretical properties can be found in~\cite{Hoheisel2013,Kanzow2015,Nurkanovic2024b}.
Active-set methods attempt to identify feasible or optimal active sets of the complementarity constraints, thereby reducing the MPCC locally to a standard NLP on which the iterations are performed. 
Classical approaches of this type are reported in~\cite{Giallombardo2008,Izmailov2008,Lin2006}, while more recent variants~\cite{Guo2022,Kirches2022,Nurkanovic2025} solve more elaborate subproblems and guarantee convergence to B-stationary points under weaker assumptions.

Most of the methods above require the repeated solution of NLPs associated with the MPCC to convergence, or at least to high accuracy. 
There is also a class of sequential quadratic programming (SQP) methods tailored to MPCCs~\cite{Chen2007,Fletcher2006,Giallombardo2008,Luo1996}.

In~\cite{Luo1996}, the authors propose a piecewise SQP method and establish its local convergence. 
The idea is to apply the classical SQP method to the ``pieces'' of the MPCC, namely to so-called branch NLPs; see also Sec.~\ref{sec:mpcc_theory}. 
These branch NLPs are obtained by fixing each complementarity pair either to $G_i(w)\geq 0,\ H_i(w)=0$ or to $G_i(w)=0,\ H_i(w)\geq 0$. 
Based on the values of the complementarity functions at the current iterate, a partition is selected and SQP is applied to the corresponding branch. 
Consequently, once the correct branch has been identified, the method inherits the local convergence properties of the classical SQP method~\cite[Theorem~6.4.3]{Luo1996}. 
Its main limitation is that this favorable behavior relies on the iterate being sufficiently close to a solution so that the optimal active set of nondegenerate complementarity pairs, that is, those satisfying $G_i(\bar w)+H_i(\bar w)>0$, has already been identified. 
Globalization strategies for such methods are discussed in~\cite{Giallombardo2008,Scholtes1999}.

Fletcher et al.~\cite{Fletcher2006} study the application of the classical SQP method directly to NLP reformulations of MPCCs. 
Their analysis assumes standard regularity conditions together with the availability of a quadratic programming (QP) solver capable of detecting a linearly independent active-set basis. 
This requirement is arguably restrictive in practice, since many modern QP solvers that perform well in applications~\cite{Kouzoupis2018} do not satisfy it. 
As in piecewise SQP, the theory also depends on assumptions ensuring that the relevant complementarity structure is identified in early iterates.

In this paper, we consider sequential quadratic programming with complementarity constraints (SQPCC), introduced by Scholtes~\cite{Scholtes2004}. SQPCC is arguably the most direct extension of SQP to MPCCs, since it retains the complementarity structure in the subproblems, which are then quadratic programs with complementarity constraints (QPCCs). 
A detailed local convergence theory was not the focus of~\cite{Scholtes2004}. 
There, only a sketch of a local convergence proof was provided, relying on strong second-order sufficient conditions, strict complementarity, and strict upper-level complementarity~\cite[Sec.~5]{Scholtes2004}; cf.~Sec.~\ref{sec:mpcc_theory}. 
By contrast, we show that SQPCC admits local convergence rates analogous to classical SQP for all locally convergent sequences of QPCC S-stationary points.
In particular, no explicit active-set stabilization or branch-selection assumptions are needed, since this role is handled implicitly by the QPCC subproblems. 
Moreover, the local convergence region can be strictly larger; cf.~Sec.~\ref{sec:sqpcc}.

It is clear that solving QPCC subproblems can be more difficult than solving QPs~\cite{Bai2013}. 
However, substantial recent progress in tailored QPCC solvers~\cite{Aydinoglu2024,DeMarchi2023,DeMarchi2025,Hall2024,Jia2023,Kirches2010y,Pozharskiy2026} has led to orders-of-magnitude speedups over naive approaches, making SQPCC increasingly practical from a computational perspective.

Our interest in the local convergence theory of SQPCC is also motivated by dynamic optimization, in particular by model predictive control. 
In standard parametric NLPs arising in dynamic optimization, the QP subproblems are closely related to implicit-function-theorem-based sensitivity results~\cite{Fiacco1983} for parametric NLPs~\cite{Diehl2001}. 
This link is lost for parametric MPCCs, and neither classical SQP nor piecewise SQP retains it. 
However, it was shown recently in~\cite{Nurkanovic2026a} that an analogous relation is preserved by QPCCs, which provides a basis for model predictive control of dynamic complementarity systems~\cite{Nurkanovic2023f}. 

The goal of the present paper is to provide a detailed understanding of the local convergence of the SQPCC method, rather than an extensive numerical benchmarking study. However, nontrivial optimal control examples solved with SQPCC are available in the GitHub repository of \texttt{nosnoc}~\cite{Nurkanovic2022b}.

\subsection{Contributions.}
The main contributions of this paper are as follows:
\begin{enumerate}
	\item In Theorem~\ref{th:sqp_local_convergence}, we reformulate a local convergence result for the classical SQP method for NLPs from~\cite{Bonnans1994} so that it also covers linearly convergent variants, such as Gauss--Newton Hessian approximations. 
	In comparison to~\cite{Bonnans1994}, our analysis is formulated in terms of the $\kappa$- and $\omega$-assumptions~\cite{Bock1987}, which are widely used in the local convergence analysis of Newton-type and SQP-type methods~\cite{Deuflhard2011,TranDinh2012b,Verschueren2016,Zanelli2021c}. 
	No strict complementarity or active-set stabilization assumptions are required. 
	The proof uses only elementary tools from nonlinear programming theory.
	
	\item In Section~\ref{sec:sqpcc} and Theorem~\ref{th:sqpcc_local_convergence}, we establish a new local convergence result to S-stationary points for the SQPCC method. 
	Our assumptions are MPCC counterparts of the regularity conditions used in the classical SQP theory, in particular MPCC--SSOSC and MPCC--LICQ (or the weaker MPCC-SMFCQ); see~Sec.~\ref{sec:mpcc_theory} for definitions.
	In contrast to previous results~\cite{Scholtes2004}, our analysis does not require any strict complementarity and allows for complementarity active-set changes during the iterations. 
	Moreover, we prove that stationary points weaker than S-stationary points can, if sufficiently close, attract iterates to S-stationary points, thereby enlarging the local convergence region.
	
	\item In addition, we clarify how multiple local sequences of QPCC S-stationary points may coexist, when spurious attracting sequences can occur, and under which conditions such sequences are excluded (cf. Corollary~\ref{cor:no_spurious_sequences}). 
	Moreover, we discuss conditions ensuring local uniqueness of the SQPCC iterates.
	
	\item In Proposition~\ref{th:active_set_stabilization_sqpcc}, we establish an active-set stabilization result for SQPCC. 
	More precisely, we identify conditions on the MPCC--Lagrange multipliers under which the optimal complementarity and inequality active sets are identified after finitely many iterations, as well as conditions under which this identification occurs only asymptotically.
	
	\item Numerical examples illustrate qualitative advantages of SQPCC over classical SQP applied to MPCC reformulations. 
	In particular, we show that Theorem~\ref{th:sqpcc_local_convergence} is not affected by a counterexample due to Leyffer and Munson~\cite{Leyffer2007}. 
	Moreover, we show that SQPCC can converge to an S-stationary point even in cases where SQP converges to a non-S-stationary point. 
	
\end{enumerate}

\subsection{Outline.}
Section~\ref{sec:nlp_theory} reviews basic concepts from nonlinear programming, and Section~\ref{sec:sqp} presents SQP methods together with a local convergence result and an active-set stabilization result. 
The MPCC counterparts of NLP regularity notions are provided in Section~\ref{sec:mpcc_theory}. 
Section~\ref{sec:sqpcc} derives the SQPCC method, provides a local convergence proof, an active-set stabilization result, and illustrative examples.
Section~\ref{sec:conclusion} discusses directions for future research.

\subsection{Notation.} An open ball centered at $\bar w \in \R^n$ with radius $\varepsilon >0$ is denoted by $\B_{\varepsilon}(\bar w) := \{ w \in \R^n \mid \| w- \bar w \| < \varepsilon \}$.
The concatenation of two vectors $ x \in \R^{n_x}, y \in \R^{n_y}$ is compactly denoted by $ (x,y) := [x^\top, y^\top]^\top$. 
The concatenation of several vectors is defined accordingly.
Subscripts of the form $w^k$ denote iterates, whereas subscripts of the form $w_j$ denote the $j$th component of a vector.
For the Jacobian of a function $f: \R^n \to \R^m$ we will use the notation $\nabla f(w) = \frac{\partial f}{\partial w}(w)^\top \in \R^{n \times m}$.
We use the symbol $L$ for the usual NLP Lagrangian, and calligraphic $\mathcal{L}$ for the MPCC--Lagrangian.

\section{Preliminaries on nonlinear programming}\label{sec:nlp_theory}
We state basic optimality and sensitivity results for standard nonlinear programs (NLPs). 
This is necessary to present the local convergence results for SQP and their MPCC counterparts in a self-contained manner.
\subsection{Regularity and local optimality}
Consider the following NLP:
\begin{mini!}[2]
	{\substack{w \in \R^{n}}}{f(w)\label{eq:nlp_obj}}
	{\label{eq:nlp}}{}
	\addConstraint{h(w)}{=0 \label{eq:nlp_eq}}
	\addConstraint{g(w)}{\leq0, \label{eq:nlp_ineq}}
\end{mini!}
where $f : \R^n \to \R$ and $h : \R^n \to \R^{m_h}$, $g : \R^n \to \R^{m_g}$ are twice continuously differentiable functions.
The Lagrange multipliers for the equality and inequality constraints are denoted by $\lambda \in \R^{m_h}$ and $\mu \in \R^{m_g}$, respectively.
The feasible set of the NLP~\eqref{eq:nlp} is denoted by $\Omega := \{ w\in \R^n \mid h(w) = 0, g(w) \leq 0\}$.
The Lagrange function $L: \R^n \times \R^{m_h} \times \R^{m_g} \to \R$ of the NLP~\eqref{eq:nlp} is defined as:
\begin{align*}
	L(w,\lambda, \mu) &= f(w) + \lambda^\top h(w) + \mu^\top g(w). 
\end{align*}

The Karush–Kuhn–Tucker (KKT) conditions read as:
\begin{subequations}\label{eq:kkt}
	\begin{align}
		&\nabla f(w) + \nabla h(w) \lambda + \nabla g(w) \mu = 0, \\
		&h(w) = 0,\\
		&0 \leq -g(w) \perp \mu \geq 0.
	\end{align}
\end{subequations}
A point $\bar z:= (\bar w,\bar \lambda,\bar \mu)$ satisfying these conditions is called a KKT point, and $\bar w$ is called a stationary point.
If in addition a constraint qualification holds (defined below), the KKT conditions are first-order necessary conditions for optimality.
The active set at a feasible point $w \in \Omega$ is defined as:
\begin{align}\label{eq:active_sets}
	\A(w) = \{ i \in \{1,\ldots,m_g \} \mid g_i(w) = 0\}.
\end{align}
Its complement $\A^{\mathrm{C}}(w) = \{1,\ldots, m_g \} \setminus \A(w)$ is the set of inactive constraints.
The strictly and weakly active sets at a KKT point $(w,\lambda,\mu)$ are defined as:
\begin{align}\label{eq:active_sets_strict}
	&\A_+(w,\mu) = \{ i \in \A(w) \mid \mu_i > 0\},\quad
	\A_0(w,\mu) = \{ i \in \A(w) \mid \mu_i = 0\}.
\end{align}
We say that strict complementarity holds for a KKT point if 
\(
\mu_i> 0, \ \forall i \in \A(w).
\)
Next, we define some basic regularity concepts in nonlinear programming.

\begin{definition}[LICQ]
	At a feasible point $w \in \Omega$, the linear independence constraint qualification (LICQ) is said to hold if the set of vectors $\{ \nabla h_i(w) \mid i = 1,\ldots, m_h\} \cup \{ \nabla g_i(w) \mid i \in \A(w)\}$ are linearly independent.
\end{definition}
The uniqueness of the Lagrange multipliers is equivalent to the strict Mangasarian--Fromovitz constraint qualification (SMFCQ)~\cite{Izmailov2014,Wachsmuth2013}, which is implied by LICQ.
In this paper, we will use LICQ over SMFCQ, since it is stated only in terms of the problem functions and does not require the multipliers~\cite{Wachsmuth2013}.
However, in all assumptions below the LICQ may be replaced by the SMFCQ (both in the NLPs and the MPCC counterparts).

We use the notation $\nabla g_{\I}(w) \in \R^{n \times |\I|}$ for a matrix whose columns are the vectors $\nabla g_i(w),\ i \in \I \subseteq \{1,\ldots,m_g\}$.
Next, we define some second-order sufficient conditions (SOSC).
For this purpose we define the critical cone at KKT point $(w,\lambda,\mu)$:
\begin{align*}
	\mathcal{C}(w,\mu) = \{ d\in \R^n \mid \nabla h(w)^\top d = 0,\ \nabla g_{\A_+(w,\mu)}(w)^\top d = 0,\ 
	\nabla g_{\A_0(w,\mu)}(w)^\top d \leq 0
	\}.
\end{align*}
The tangent space of all equality and strictly active constraints is denoted by:
\begin{align}\label{eq:strong_critical_std}
	\mathcal{C}^\mathrm{S}(w,\mu) = \{ d\in \R^n \mid \nabla h(w)^\top d = 0,\ \nabla g_{\A_+(w,\mu)}(w)^\top d = 0 
	\}.
\end{align}
This cone is also called the strong critical cone.
If strict complementarity holds we have that $\mathcal{C}^{\mathrm{S}}(w,\mu) = \mathcal{C}(w,\mu)$.
Otherwise, we have that 	
$\mathcal{C}(w,\mu) = \{d \in \mathcal{C}^{\mathrm{S}}(w,\mu)  \mid \nabla g_{\A_0(w,\mu)}(w)^\top d \leq 0 \} \subset \mathcal{C}^\mathrm{S}(w,\mu)$.
We will also use the so-called strong second-order sufficient conditions (SSOSC).
\begin{definition}
	The SSOSC is said to hold at a stationary point $w \in \Omega$ if there exist corresponding multipliers $(\lambda,\mu)$ such that $(w,\lambda,\mu)$ is a KKT point, and if the following inequality holds:
	\begin{align}\label{eq:ssosc}
		d^\top \nabla_{ww}^2 L(w,\lambda,\mu) d > 0,\ \forall d \in \mathcal{C}^{\mathrm{S}}(w,\mu) \setminus\{0\}.
	\end{align}
\end{definition}
A less restrictive notion is the usual second-order sufficient conditions, which is obtained if we replace $\mathcal{C}^{\mathrm{S}}(w,\mu)$ by $\mathcal{C}(w,\mu)$ in \eqref{eq:ssosc}.
Under strict complementarity the SOSC and SSOSC coincide.

\subsection{Stability of solutions}
Now we modify the NLP~\eqref{eq:nlp} by allowing that all problem functions also depend on a parameter $p \in \R^{n_p}$.
In our context, such formulations are useful for the convergence analysis of the SQP method.
For a parameter $p \in \R^{n_p}$, we regard the following NLP:
\begin{mini!}[2]
	{\substack{w\in \R^{n}}}{f(w,p)\label{eq:pnlp_obj}}
	{\label{eq:pnlp}}{}
	\addConstraint{h(w,p)}{=0 \label{eq:pnlp_eq}}
	\addConstraint{g(w,p)}{\leq0,\label{eq:pnlp_ineq}}
\end{mini!}
with $f: \R^n \times \R^{n_p} \to \R$, and $h:\R^n \times \R^{n_p}  \to \R^{m_h},\ g:\R^{n} \times \R^{n_p}  \to \R^{m_g}$ are twice continuously differentiable functions.
All notions introduced above are defined with respect to $w$, and their definitions extend readily to the parametric setting.
Next, we restate a classical result on the stability and sensitivity of solutions of parametric NLPs~\cite{Fiacco1983}.

\begin{theorem}[Theorem~2.4.5, \cite{Fiacco1983}]\label{th:fiacco_nlp_sensitivity}
	If $f$, $g$, and $h$ are twice continuously differentiable in $(w,p)$ in a neighborhood of $(\bar w,\bar p)$, if the SSOSC hold for problem \eqref{eq:pnlp} at a KKT point $(\bar w,\bar{\lambda},\bar{\mu})$ corresponding to a fixed $\bar p$, and if the LICQ holds at $\bar{w}$, then:
	\begin{enumerate}[(a)]
		
		\item $\bar{w}$ is an isolated local optimal solution of~\eqref{eq:pnlp} for $p = \bar p$, with $(\bar \lambda,\bar \mu)$ being the unique Lagrange multiplier vector associated with $\bar{w}$;
		
		\item there exist a neighborhood of $\bar p$, and in this neighborhood there exists a unique continuous vector function
		\(
		z(p) = (w(p), \lambda(p),  \mu(p)) 
		\)
		satisfying the SSOSC for a local minimum of	problem \eqref{eq:pnlp} such that
		$z(\bar p) = (\bar w ,\bar \lambda ,\bar \mu )$, and hence $w(p)$ is a locally unique minimizer of \eqref{eq:pnlp} with the parameter $p$, with associated unique Lagrange multipliers $\lambda(p)$ and $\mu(p)$;
		
		
		\item the solution map $z(p)$ is Lipschitz in $p$, i.e., there exist constants $0<\gamma<\infty$ and $\delta>0$ such that for every $p$ with $\|p-\bar{p}\|<\delta$ it holds that:
		\begin{align}\label{eq:lipschitz_solution_fiacco}
			\|z(p)-\bar z\|\leq \gamma \|p-\bar{p}\|
		\end{align}
	\end{enumerate}
\end{theorem}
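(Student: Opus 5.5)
The plan is to follow the classical route of Robinson and Kojima. First recast the KKT system as a generalized equation, then show it is strongly regular at $\bar z$ under LICQ and SSOSC. Strong regularity, together with Robinson's implicit function theorem for generalized equations, then yields the existence, uniqueness, continuity and Lipschitz continuity in (b) and (c).

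Part (a) comes first and is the elementary part.
\begin{itemize}
\item Uniqueness of $(\bar\lambda,\bar\mu)$: LICQ makes the columns of $\nabla h(\bar w,\bar p)$ and $\nabla g_{\A(\bar w)}(\bar w,\bar p)$ linearly independent. The stationarity equation therefore determines the multipliers uniquely, with inactive multipliers equal to zero.
\item Strict local minimality: SSOSC implies SOSC, since $\mathcal{C}\subseteq\mathcal{C}^{\mathrm S}$. The standard second-order sufficiency argument then applies: take a contradiction sequence $w^k\to\bar w$ with $f(w^k)\le f(\bar w)$ and directions $(w^k-\bar w)/\|w^k-\bar w\|\to d\in\mathcal{C}(\bar w,\bar\mu)$, and expand the Lagrangian to second order. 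This gives a strict local minimizer.
\item Isolatedness as a stationary point: this will follow from the local uniqueness in (b) with $p=\bar p$.
\end{itemize}

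For (b), write the KKT conditions as $0\in F(z,p)+N_K(z)$. Here $F(z,p)=(\nabla_w L,\,h,\,-g)$ and $K=\R^n\times\R^{m_h}\times\R^{m_g}_+$, with the sign convention arranged so that the complementarity $0\le -g\perp\mu\ge0$ is encoded by the normal cone to $\R^{m_g}_+$ in $\mu$.

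The key step, and the main obstacle, is proving strong regularity: the linearized generalized equation
\[
0\in F(\bar z,\bar p)+\nabla_z F(\bar z,\bar p)^\top(z-\bar z)-\delta+N_K(z)
\]
must have a locally unique, Lipschitz solution $z(\delta)$ for small perturbations $\delta$. I would proceed in two steps.
\begin{itemize}
\item \emph{Reduction to finitely many equality-constrained QPs.} For small $\delta$, constraints with $\bar\mu_i>0$ stay active, by continuity of the multipliers. Inactive constraints stay inactive. Each weakly active index $i\in\A_0$ may be either active or inactive. The linearized system is thus the KKT system of a QP whose solutions are pieced together from finitely many equality-constrained QPs, one for each subset $J\subseteq\A_0(\bar w,\bar\mu)$ treated as active.
\item \emph{Nonsingularity of each piece.} Each such QP has KKT matrix
\[
\begin{pmatrix}\nabla^2_{ww}L & \nabla h & \nabla g_{\A_+\cup J}\\ \nabla h^\top & 0 & 0\\ \nabla g_{\A_+\cup J}^\top & 0&0\end{pmatrix}.
\]
This matrix is nonsingular because LICQ gives full column rank of the constraint gradients. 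Moreover, the Hessian is positive definite on the null space of these gradients, since that null space is contained in $\mathcal{C}^{\mathrm S}$. This containment is exactly where SSOSC rather than SOSC is needed.
\end{itemize}

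Nonsingularity of every piece gives a piecewise affine, hence Lipschitz, solution map. Uniqueness follows because the reduced QP is strictly convex on the relevant subspace, so its KKT point is unique (Kojima's coherent-orientation argument gives the same conclusion). Robinson's theorem then transfers strong regularity to the nonlinear perturbation $F(z,p)$, since $F$ is $C^1$ in $(z,p)$. This yields a unique continuous $z(p)$ near $\bar z$ with
\[
\|z(p)-\bar z\|\le\gamma\|F(\bar z,p)-F(\bar z,\bar p)\|\le\gamma'\|p-\bar p\|,
\]
which is (c).

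Finally, I verify that $z(p)$ satisfies LICQ and SSOSC, so that $w(p)$ is a locally unique minimizer with unique multipliers. LICQ persists by continuity, since the active set of $w(p)$ is contained in $\A(\bar w)$ for $p$ near $\bar p$. For SSOSC, note that the strictly active set of $z(p)$ contains $\A_+(\bar w,\bar\mu)$, because $\mu_i(p)\to\bar\mu_i>0$. Hence $\mathcal{C}^{\mathrm S}(w(p),\mu(p))$ lies within a small perturbation of a subspace of $\mathcal{C}^{\mathrm S}(\bar w,\bar\mu)$. Positive definiteness there follows from a compactness argument on the unit sphere together with continuity of $\nabla^2_{ww}L$.
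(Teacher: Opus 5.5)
The paper gives no proof of this theorem. It is quoted from Fiacco's monograph, and the standard proof of that result is exactly the strong-regularity argument you outline:
\begin{enumerate}[(1)]
\item write the KKT system as a generalized equation over $\R^n\times\R^{m_h}\times\R^{m_g}_+$;
\item derive strong regularity of the linearization from LICQ and SSOSC;
\item apply Robinson's implicit function theorem;
\item check that LICQ and SSOSC persist along $z(p)$.
\end{enumerate}
So the route is the right one. Your treatment of (a), of the estimate in (c), and of the persistence of SSOSC by compactness on the unit sphere is fine.

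The step that fails as written is ``nonsingularity of every piece gives a piecewise affine, hence Lipschitz, solution map.'' Nonsingular pieces give neither existence, nor single-valuedness, nor continuity. For example, the scalar problem $0\le z\perp q-z\ge 0$ has two nonsingular pieces (determinants $1$ and $-1$), yet it has two solutions for $q>0$ and none for $q<0$. You already have the missing ingredient, but you use it only for uniqueness. Order the argument as follows.
\begin{enumerate}[(i)]
\item LICQ makes the reduced feasible set nonempty for every right-hand side.
\item Positive definiteness of $\nabla^2_{ww}L$ on the null space of the gradients of $h$ and $g_{\A_+(\bar w,\bar\mu)}$ (that null space is $\mathcal{C}^{\mathrm S}(\bar w,\bar\mu)$) makes the objective strictly convex and coercive on the corresponding affine subspace. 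This gives existence and uniqueness of the primal solution, and LICQ gives unique multipliers.
\item Closed graph plus local boundedness then gives continuity.
\item Only then can you conclude Lipschitz continuity, because a continuous selection of finitely many affine maps is automatically Lipschitz.
\end{enumerate}

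Alternatively, you can invoke coherent orientation, which does hold here. It does not hold for the symmetric matrices you wrote, whose determinants alternate in sign with $|J|$. It holds for the unsymmetrized Jacobian $\nabla_z F(\bar z,\bar p)^\top$, whose $g$-rows carry a minus sign: its principal minor on the $w$-, $\lambda$- and $\mu_S$-coordinates has sign $(-1)^{m_h}$ for every $S$ with $\A_+(\bar w,\bar\mu)\subseteq S\subseteq\A(\bar w)$.

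Two smaller points:
\begin{itemize}
\item ``Constraints with $\bar\mu_i>0$ stay active'' should be a localization to solutions in a fixed neighborhood of $\bar z$. You then check afterwards, using the continuity just proved, that the reduced solution lies in that neighborhood. As written, it appeals to a continuity you have not yet established.
\item In (a), isolatedness of $\bar w$ as a local minimizer needs more than local uniqueness of the KKT triple. You also need the fact that, under LICQ, the multipliers of stationary points near $\bar w$ are close to $(\bar\lambda,\bar\mu)$.
\end{itemize}
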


It is well known that, due to continuity of the solution map $z(p)$ of~\eqref{eq:pnlp}, strictly active constraints remain active and inactive constraints remain inactive for all $p \in \B_{\rho}(\bar p)$ for some $\rho > 0$.
The size of this neighborhood depends on the associated Lagrange multipliers and the slack of the inactive constraints.
We state this fact explicitly for later reference.
\begin{corollary}\label{lem:active_set_stabilization}
	Regard the parametric NLP~\eqref{eq:pnlp} and suppose the assumptions of Theorem~\ref{th:fiacco_nlp_sensitivity} hold true for~\eqref{eq:pnlp}, with $(\bar w, \bar \lambda, \bar \mu)$ being the solution at a fixed $\bar p$.
	Then there exist a constant $\rho>0$, such that for every $p \in \B_{\rho} (\bar p)$ it holds that:
	\begin{align}\label{eq:active_set_general}
		\mathcal{A}_+(\bar w,\bar\mu)=\mathcal{A}_+(w(p),\mu(p)),
		\qquad
		\mathcal{A}^\mathrm{C}(\bar w) = \mathcal{A}^\mathrm{C}(w(p)).
	\end{align}
\end{corollary}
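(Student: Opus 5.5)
The plan is to read off both identities in \eqref{eq:active_set_general} from the continuity, indeed Lipschitz continuity, of the solution map $z(p)=(w(p),\lambda(p),\mu(p))$ given by Theorem~\ref{th:fiacco_nlp_sensitivity}(b)--(c). I understand the equalities in \eqref{eq:active_set_general} as the paper uses them: every index that is strictly active at $\bar p$ stays strictly active, and every index that is inactive at $\bar p$ stays inactive. The only indices whose status is not fixed are those in $\A_0(\bar w,\bar\mu)$. These are neither in $\A_+(\bar w,\bar\mu)$ nor in $\A^{\mathrm C}(\bar w)$, so the statement makes no claim about them.

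First I fix the margins. Set
\[
\mu_{\min}:=\min_{i\in\A_+(\bar w,\bar\mu)}\bar\mu_i>0,
\qquad
s_{\min}:=\min_{i\in\A^{\mathrm C}(\bar w)}\bigl(-g_i(\bar w,\bar p)\bigr)>0,
\]
with the convention that a minimum over an empty set is $+\infty$. Both quantities are finite and positive because the index sets are finite.

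Next I use the Lipschitz bound \eqref{eq:lipschitz_solution_fiacco}. It gives $|\mu_i(p)-\bar\mu_i|\le\gamma\|p-\bar p\|$. Hence $\mu_i(p)>0$ for all $i\in\A_+(\bar w,\bar\mu)$ as soon as $\|p-\bar p\|<\mu_{\min}/\gamma$. Complementarity in the KKT system \eqref{eq:kkt} at $p$ then forces $g_i(w(p),p)=0$, so $i\in\A_+(w(p),\mu(p))$.

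For the inactive indices I use that $g$ is continuous in $(w,p)$ and that $w(p)\to\bar w$ as $p\to\bar p$. Together these give $\rho_2>0$ with $g_i(w(p),p)<-s_{\min}/2<0$ for all $i\in\A^{\mathrm C}(\bar w)$ and all $\|p-\bar p\|<\rho_2$. Such indices are inactive, and complementarity forces $\mu_i(p)=0$. Taking $\rho:=\min\{\delta,\mu_{\min}/\gamma,\rho_2\}$, where $\delta$ is from Theorem~\ref{th:fiacco_nlp_sensitivity}(c), shows that for $p\in\B_\rho(\bar p)$ both index classes keep their status. This is exactly the content of \eqref{eq:active_set_general}.

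The only delicate point is the bookkeeping for indices in $\A_0(\bar w,\bar\mu)$. Such an index may become strictly active, stay weakly active, or become inactive at $p$, so no neighborhood forces its status. The proof should therefore state explicitly that these indices lie outside both sets named in \eqref{eq:active_set_general}, and that the statement is about preserving the status of the strictly active and inactive indices. For the ``inactive'' part, a uniform estimate in $p$ is needed; I get it from the joint continuity of $g_i(\cdot,\cdot)$ at $(\bar w,\bar p)$, composed with the continuity of $w(\cdot)$. The sizes $\mu_{\min}$ and $s_{\min}$ make precise the remark before the corollary that $\rho$ depends on the multipliers and on the slack of the inactive constraints.
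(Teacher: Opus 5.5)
Your proof is correct and follows essentially the paper's argument. The paper merges your two cases into the sign of a single switching function $\varphi_i(p)=\mu_i(p)+g_i(w(p))$, which is positive iff the constraint is strictly active and negative iff it is inactive. Its Lipschitz continuity, inherited from Theorem~\ref{th:fiacco_nlp_sensitivity}(c), gives the explicit radii $\bar\mu_i/L_{\varphi_i}$ and $-g_i(\bar w)/L_{\varphi_i}$. Your radius $\rho_2$ for the inactive indices comes from plain continuity and is not actually explicit in $s_{\min}$; a local Lipschitz constant of $g$ would make it explicit.

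As in the paper, what you actually establish are the inclusions $\A_+(\bar w,\bar\mu)\subseteq\A_+(w(p),\mu(p))$ and $\A^{\mathrm C}(\bar w)\subseteq\A^{\mathrm C}(w(p))$, and you should say so instead of asserting that the statement ``makes no claim'' about $\A_0(\bar w,\bar\mu)$. Read literally, the set equalities forbid weakly active indices from changing status, and they are false without strict complementarity. For example, take $\min_w w^2+w^4-pw$ s.t.\ $w\ge 0$ at $\bar p=0$: the constraint is inactive for $p>0$ and strictly active for $p<0$.
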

\textit{Proof.}
Let us define the switching function
$
\varphi_i(p) := \mu_i(p) + g_i(w(p)).
$
It follows from Theorem \ref{th:fiacco_nlp_sensitivity}~(c) that there exists a $\delta > 0$ such that, for all $p\in \B_{\delta}(\bar p)$, the solution map $z(p)$ is Lipschitz continuous.
Consequently, both $g(w(p))$ and $\mu(p)$ are locally Lipschitz continuous functions of $p$, and hence so is $\varphi_i(p)$.

Due to continuity we have for all strictly active constraints it holds for all $p$ near $\bar p $ that $\varphi_i(p) > 0$.
We can compute explicitly the radius for which this holds true for each constraint.
In particular, from Lipschitz continuity of the switching function, we have for a Lipschitz constant $L_{\varphi_i}>0$:
$
\varphi_i(p) - \varphi_i(\bar p) \geq - L_{\varphi_i}\|p - \bar p\|.
$
The constraint $g_i(w(p)) = 0 $ stays strictly active if 
\begin{align}\label{eq:active_set_on_mu}
	\|p - \bar p \| < \frac{\varphi_i(\bar p)}{L_{\varphi_i}} = \frac{\bar \mu_i}{L_{\varphi_i}} := \rho^\mu_i.
\end{align} 
By repeating a similar argument for inactive constraints and $\A^{\mathrm{C}}(\bar{w}) = \{i \mid \varphi_i( \bar p) < 0 \}$, using Lipschitz continuity we have that it remains $g_i(w(p)) > 0$ for:
\begin{align}\label{eq:active_set_on_g}
	\|p - \bar p \| < \frac{-\varphi_i(\bar p)}{L_{\varphi_i}} = \frac{ - g_i (\bar w)}{L_{\varphi_i}} := \rho^g_i.
\end{align}
By picking $\rho = \min\Big( \delta,  \min_{i \in \A^+(\bar w, \bar \mu)} \rho_i^\mu, \min_{i \in \A^\mathrm{C}(\bar w)} \rho_i^g  \Big)$ we have that 
for $p \in \B_{\rho}(\bar p)$ equation~\eqref{eq:active_set_general} holds true.
\qed

\section{Sequential quadratic programming (SQP)}\label{sec:sqp}
In this section, we explicitly recall the derivation of the SQP method for NLPs, as it will be useful for explaining the derivation of the SQPCC methods for MPCCs in Section~\ref{sec:sqpcc}.
Moreover, we provide local convergence and active-set stabilization results.
These results are needed to state and prove analogous results for the SQPCC method.
\subsection{Method statement}
We consider a \textit{piecewise linearization} of the KKT system~\eqref{eq:kkt} at the point $(w^k,\lambda^k,\mu^k)$.
In particular, the bilinear term in the complementarity conditions is not linearized; instead, each function entering the complementarity condition is linearized separately.
Denoting the displacement by $\Delta w^k = w^{k+1} - w^k$, this yields:
\begin{subequations}\label{eq:qp_kkt}
	\begin{align}
		&\nabla f(w^k) + \nabla h(w^k)\lambda^{k+1} + \nabla g(w^k)\mu^{k+1}
		+ \nabla_{ww}^2 L(w^k,\lambda^k,\mu^k) \Delta w^k = 0,\\
		&h(w^k) + \nabla h(w^k)^\top \Delta w^k = 0,\\
		&0 \leq -g(w^k) - \nabla g(w^k)^\top \Delta w^k \perp \mu^{k+1} \geq 0.
	\end{align}
\end{subequations}
Clearly, for $\Delta w = 0$, we have $(w^{k+1},\lambda^{k+1},\mu^{k+1}) = (\bar{w},\bar{\lambda},\bar{\mu})$, and we recover the KKT conditions of the NLP at $(\bar{w},\bar{\lambda},\bar{\mu})$.

In general, the piecewise linearized KKT conditions~\eqref{eq:qp_kkt} at a linearization point $(w^k,\lambda^k,\mu^k)$ represent the KKT conditions of a quadratic program (QP).
Additionally, we may approximate the exact Hessian $\nabla_{ww}^2 L(w^k,\lambda^k,\mu^k)$ by a positive definite matrix $H^k$, which may depend on $w^k$ or on $(w^k,\lambda^k,\mu^k)$.
This yields the following QP:
\begin{mini!}[2]
	{\substack{\Delta w \in \R^{n}}}{\nabla f(w^k)^\top \Delta w + \frac{1}{2} \Delta w^\top H^k \Delta w \label{eq:qp_obj}}
	{\label{eq:qp}}{}
	\addConstraint{h(w^k) + \nabla h(w^k)^\top \Delta w}{=0 \label{eq:qp_eq}}
	\addConstraint{g(w^k) + \nabla g(w^k)^\top \Delta w}{\leq 0 \label{eq:qp_ineq}},
\end{mini!}
with Lagrange multipliers $\lambda^{k+1}$ and $\mu^{k+1}$ associated with the equality and inequality constraints, respectively.

In the full-step SQP method, a sequence of iterates $\{z^{k}\}$ is generated by solving the QP~\eqref{eq:qp} at each linearization point $(w^k,\lambda^k,\mu^k)$, yielding a step $\Delta w^k$, updating $w^{k+1} = w^k + \Delta w^k$, and setting the multipliers at iteration $k\!+\!1$ equal to the QP multipliers.
If the initial iterate $z^0$ is chosen sufficiently close to a solution $\bar{z}$, then under appropriate conditions the sequence $\{z^{k}\}$ converges (super)linearly or quadratically to $\bar{z}$, as shown in Theorem~\ref{th:sqp_local_convergence} below.

\begin{remark}\label{rem:adjoint_sqp}
	The constraint Jacobians in~\eqref{eq:qp}~may be approximated by $A^k \approx \nabla h(w^k)^\top$ and $B^k \approx \nabla g(w^k)^\top$, which in turn requires a modification of the objective gradient in~\eqref{eq:qp_obj}.
	This yields so-called adjoint-based SQP methods~\cite{Diehl2010a}.
	All results in this paper, for both the SQP and the SQPCC method, extend to this case with straightforward modifications.
	In particular, the constraint qualification should not only hold with the gradients of $\nabla h_i (w^k)$ and $\nabla g_i (w^k)$, but also for their approximations $A_i^k$ and $B_i^k$, respectively.
	To keep the notation light and the exposition clear, we restrict our exposition to the case of exact constraint Jacobians as in~\eqref{eq:qp}.
\end{remark}

\subsection{Local convergence}
The local convergence properties of the SQP method are well understood, and many variants of the classical results are available~\cite{Izmailov2014}. 
Among the sharpest of these is the result of Bonnans~\cite{Bonnans1994}. 
Here, we present a variant of this result under the commonly used ``kappa'' and ``omega'' assumptions from the convergence analysis of Newton-type methods~\cite{Bock2007,TranDinh2012b,Verschueren2016,Zanelli2021c}; see, for example,~\cite[Theorem~8.7]{Rawlings2017}, as well as \cite[Section~3.1]{Bock1987} and \cite[Section~4.3]{Deuflhard2011}. 
To the best of our knowledge, an elementary SQP convergence result in the $\kappa$--$\omega$ setting, but without strict complementarity, and hence without assuming immediate identification of the optimal active set $\A(\bar w)$ by the SQP iterates, has not been stated explicitly in the literature. 
We derive such a result here, both because these assumptions provide useful insight into the convergence mechanism and because the same framework will later be extended to the SQPCC method in Sec.~\ref{sec:sqpcc}. 
Our proof builds on the techniques introduced in~\cite{Bonnans1994}, where superlinear and quadratic convergence are established.

For notational convenience, we collect all defining functions of the KKT conditions~\eqref{eq:kkt} into a single mapping $\Phi: \R^{n_z} \to \R^{n_z}$ with $n_z = n + m_h + m_g$:
\begin{align}\label{eq:kkt_ge_definition}
	\Phi(z) := \Big(
	\nabla_{w} L(w,\lambda,\mu),
	h(w),
	- g(w)
	\Big).
\end{align}
This mapping is commonly used to express the KKT conditions as a generalized equation and to interpret the SQP method as a Newton--Josephy method~\cite{Izmailov2014}.
In this paper, however, we use only elementary results from nonlinear programming theory, and the generalized equation framework is not required.

It is not difficult to see that the constraint Jacobians, $\nabla h(w)$ and $\nabla g(w)$, and the Lagrangian Hessian, $\nabla_{ww}^2 L(w,\lambda,\mu)$, are collected in the Jacobian matrix $\nabla \Phi(z)^\top$.
If a Hessian approximation $H^k \approx \nabla_{ww}^2 L(w^k,\lambda^k,\mu^k)$ is used instead, the corresponding matrix is denoted by $\nabla \tilde{\Phi}(z^k)^\top \in \R^{n_z\times n_z}$.

The ``omega'' condition is simply a Lipschitz continuity assumption on $\nabla \Phi(z)^\top$, with constant $\omega > 0$; see~\eqref{eq:omega_cond}.
This assumption is commonly used to obtain quadratic convergence of Newton methods and related algorithms~\cite{Deuflhard2011}.
Relaxing it to mere continuity at the solution typically weakens the result to superlinear convergence~\cite{Izmailov2014}.
A smaller $\omega$ can be interpreted as indicating a ``less nonlinear'' problem and, as we discuss after the proof, leads to a larger convergence region.

We also impose a compatibility ``kappa'' condition that bounds the error in the derivative approximations; see~\eqref{eq:kappa_cond}.
In the context of constrained optimization, to the best of the author's knowledge, this assumption was first considered in~\cite{Bock1987}.
We discuss its effect on the size of the convergence region and on the convergence speed after the proof below.

A similar convergence result to ours was established in~\cite[Theorem~3.7]{TranDinh2012b}.
However, this result does not directly recover superlinear or quadratic convergence of the SQP method because it uses a more restrictive ``kappa'' assumption, comparing $\nabla \tilde \Phi(z^k)^\top$ with the value at the solution, that is, $\nabla \Phi(\bar z)^\top$, rather than, as is more common~\cite{Bock1987,Deuflhard2011}, with the value at each iterate.
Weakening this assumption requires somewhat different arguments in the convergence proof.
Moreover, \cite[Theorem~3.7]{TranDinh2012b} assumes strong regularity, whereas here we use only elementary tools, which also allow weaker assumptions; cf.~Remark~\ref{rem:sqp_ass}.

\begin{theorem}[SQP local convergence]\label{th:sqp_local_convergence}
	Let $f:\R^n\to\R$, $h:\R^n\to\R^{m_h}$, and $g:\R^n\to\R^{m_g}$	be twice differentiable in a neighborhood of a point $\bar{w}\in\R^n$, with their second derivatives being continuous at $\bar{w}$. 
	Let $\bar{w}$ be a local solution of problem~\eqref{eq:nlp}, satisfying the LICQ and the SSOSC for the associated unique Lagrange multiplier	$(\bar\lambda,\bar\mu)\in\R^{m_h}\times\R^{m_g}$.
	Further, suppose there exist constants $\omega > 0$ and $\bar{\kappa} < \frac{1}{3\gamma}$, where $\gamma > 0$ is the Lipschitz constant of the solution map of the parametric NLP~\eqref{eq:perturbed_nlp}, and a sequence $\{\kappa^k\}$ with $\kappa^k \in [0,\bar \kappa]$ such that, for all $z^{k}$ and $z$, the following inequalities hold:
	\begin{subequations}\label{eq:kappa_omega_sqp}
		\begin{align}
			&\| \nabla \Phi (z)^\top - \nabla \Phi (z^k)^\top \| \leq \omega  \| z  - z^k \|, \label{eq:omega_cond}\\
			&\| \nabla \Phi (z^k)^\top - \nabla \tilde \Phi (z^k)^\top \| \leq \kappa ^k. \label{eq:kappa_cond}
		\end{align}
	\end{subequations}
	Then there exists a constant $\varepsilon>0$ such that for any starting point $z^0 = (w^0,\lambda^0,\mu^0)\in \B_{\varepsilon}(\bar z)$ with $\bar z = (\bar{w},\bar\lambda,\bar\mu)$, there exists a sequence $\{(w^k,\lambda^k,\mu^k)\}\subset\R^n\times\R^{m_h}\times\R^{m_g}$ such that, for $k=0,1,\dots$, the point $\Delta w^{k}$ is a stationary point of the QP~\eqref{eq:qp} with a positive definite Hessian approximation $H^k$, and $(\lambda^{k+1},\mu^{k+1})$ is an associated Lagrange multiplier.
	Any such sequence converges to $(\bar{w},\bar\lambda,\bar\mu)$, and the rate of convergence is linear satisfying the contraction inequality: 
	\begin{align}\label{eq:sqp_contraction_estimate}
		\| z^{k+1} - \bar z \| \leq \alpha^k \| z^{k} - \bar z \| + \beta  \| z^{k+1} - \bar z \|^2.
	\end{align}
	with constants $\alpha^k \in [0,1)$ for all $k$ and $\beta > 0$.
	If, in addition, $\{ \kappa^k \} \to 0$ then $\alpha^k \to 0$ and the convergence rate is superlinear, and if $\bar \kappa = 0$, then $\alpha^k = 0$ and the convergence rate is quadratic.
\end{theorem}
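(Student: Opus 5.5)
Following Bonnans, we interpret the QP~\eqref{eq:qp} at $z^k$ as a perturbation of the original NLP. The theorem refers to a parametric NLP~\eqref{eq:perturbed_nlp}; I take it to be the NLP~\eqref{eq:nlp} with a canonical perturbation of the KKT residuals by a parameter $p=(p_1,p_2,p_3)$:
\[
\nabla_w L(w,\lambda,\mu)=p_1,\qquad h(w)=p_2,\qquad g(w)\le p_3 ,
\]
together with the correspondingly perturbed complementarity. This perturbed problem is the NLP with objective $f(w)-p_1^\top w$ and constraints $h(w)-p_2=0$, $g(w)-p_3\le 0$. At $p=0$ it has the same Hessian and the same constraint gradients as the original NLP, so LICQ and SSOSC carry over. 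Theorem~\ref{th:fiacco_nlp_sensitivity} then yields a locally unique, Lipschitz solution map $z(p)$ with constant $\gamma$ on $\B_\delta(0)$.

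\textbf{Step 1: existence of iterates.} The key observation is that the KKT system~\eqref{eq:qp_kkt} of the QP, with $H^k$ in place of the exact Hessian, can be rewritten as the KKT system of the perturbed NLP at $z^{k+1}$ with parameter
\[
p^k(z^{k+1}) = \Phi(z^{k+1}) - \Phi(z^k) - \nabla\tilde\Phi(z^k)^\top (z^{k+1}-z^k)
\]
(up to sign conventions). Only the first two blocks enter, since the complementarity part is kept exact in the piecewise linearization. Using~\eqref{eq:omega_cond} and~\eqref{eq:kappa_cond} and splitting $\nabla\tilde\Phi=\nabla\Phi+(\nabla\tilde\Phi-\nabla\Phi)$ together with the integral mean value theorem, one obtains
\[
\|p^k(z)\| \le \tfrac{\omega}{2}\|z-z^k\|^2 + \kappa^k\|z-z^k\| .
\]
Existence of a QP stationary point near $\bar z$ then requires a fixed-point argument: the map $z\mapsto z(p^k(z))$ sends a small ball into itself and is continuous, so Brouwer's theorem gives a fixed point. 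Alternatively, one can argue directly that the QP with Hessian $H^k$ near $\nabla^2_{ww}L(\bar z)$ satisfies LICQ and SSOSC-type conditions and hence has a local solution nearby, via Theorem~\ref{th:fiacco_nlp_sensitivity} applied to the QP family parametrized by $z^k$. I will use the second route, with $(z^k,H^k)$ as parameter, since it is elementary. One point needs care: the statement requires $H^k$ positive definite, whereas SSOSC only gives positivity on $\mathcal{C}^{\mathrm S}$. I would take $H^k$ close to $\nabla^2_{ww}L$ and read "positive definite" as holding on the relevant tangent space, or else augment $H^k$ appropriately.

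\textbf{Step 2: contraction for any such sequence.} Take any QP stationary point $z^{k+1}$ that lies in $\B_\delta$ and whose induced parameter lies in $\B_\delta(0)$. Local uniqueness in Theorem~\ref{th:fiacco_nlp_sensitivity}(b) identifies $z^{k+1}=z(p^k)$, and part (c) gives $\|z^{k+1}-\bar z\|\le\gamma\|p^k\|$. Writing $z^{k+1}-z^k=(z^{k+1}-\bar z)-(z^k-\bar z)$ and expanding $\Phi$ around $\bar z$ rather than $z^k$ gives
\[
\|z^{k+1}-\bar z\| \le \gamma\Big(\kappa^k\|z^{k+1}-z^k\| + \tfrac{\omega}{2}\,(\cdots)\Big),
\]
from which
\[
\|z^{k+1}-\bar z\| \le \gamma\kappa^k\big(\|z^{k+1}-\bar z\|+\|z^k-\bar z\|\big) + \gamma\omega\,(\text{quadratic terms}).
\]
Absorbing the $\gamma\kappa^k\|z^{k+1}-\bar z\|$ term on the left yields~\eqref{eq:sqp_contraction_estimate}. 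If the quadratic term is expressed in $\|z^k-\bar z\|$, the coefficient becomes
\[
\alpha^k=\frac{\gamma\kappa^k+c\,\|z^k-\bar z\|}{1-\gamma\kappa^k}.
\]
The condition $\bar\kappa<1/(3\gamma)$ gives
\[
\frac{\gamma\bar\kappa}{1-\gamma\bar\kappa}<\frac12,
\]
which leaves room to choose $\varepsilon$ small enough that $\alpha^k<1$ and iterates stay in the ball, by induction. The rates follow immediately: $\kappa^k\to0$ gives $\alpha^k\to0$, and $\kappa^k\equiv0$ leaves only quadratic terms.

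\textbf{Main obstacle.} The hardest part is the "any such sequence" claim together with the uniform radius. A QP stationary point far away is not controlled. I would handle this by restricting to QP stationary points within the $\delta$-neighborhood where local uniqueness applies, and by showing that the perturbation $p^k$ stays within $\B_\delta(0)$ for all $k$ once $\varepsilon$ is small. This should follow from the estimate on $\|p^k\|$, since $\kappa^k\|z^{k+1}-z^k\|\le\bar\kappa\cdot 2\varepsilon$ and the $\omega$-term is $O(\varepsilon^2)$.
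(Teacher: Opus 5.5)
Your route is the paper's: the same canonically perturbed NLP~\eqref{eq:perturbed_nlp} with residual $r^k=\Phi(z^{k+1})-\Phi(z^k)-\nabla\tilde\Phi(z^k)^\top(z^{k+1}-z^k)$, Theorem~\ref{th:fiacco_nlp_sensitivity} applied once to that NLP (constant $\gamma$) and once to the QP family (existence of a nearby QP solution), the same $\kappa$--$\omega$ bound on $\|r^k\|$, and the same absorption step. Deriving the contraction directly from $\gamma\bar\kappa/(1-\gamma\bar\kappa)<\tfrac12$, instead of via the paper's intermediate bound $\|r^k\|\le\frac{1}{3\gamma}\|z^{k+1}-z^k\|$, is only a cosmetic difference.

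The genuine gap is the claim that \emph{any} such sequence converges. You concede that distant QP stationary points are ``not controlled''. You then only treat those already lying in the neighborhood where Theorem~\ref{th:fiacco_nlp_sensitivity}(b) gives local uniqueness, which proves a strictly weaker statement. The missing idea, which the paper uses, is convexity. Since $H^k\succ 0$, the QP~\eqref{eq:qp} is strictly convex, so its primal stationary point is unique. Its multipliers are then unique by linear independence of the linearized active constraints near $\bar w$. Hence the nearby solution from your Step~1 is the only one, every admissible sequence coincides with the constructed one, and the localization you want to impose holds automatically.

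For the same reason, your suggested fallback of requiring positive definiteness only on the relevant tangent space is not harmless: without convexity the ``any such sequence'' claim is false. Take $\min_w\, w-\tfrac12 w^2$ s.t.\ $0\le w\le 10$. The bound is strictly active at the local minimizer $\bar w=0$ with $\bar\mu=1$, so $\mathcal{C}^{\mathrm{S}}=\{0\}$; hence LICQ, SSOSC, and the $\kappa$--$\omega$ conditions with $\kappa^k=0$ all hold. Yet the exact-Hessian QP at every $w^k$ also has the distant KKT points $w^{k+1}=1$ and $w^{k+1}=10$. So keep $H^k\succ0$ as the statement does, and use it.

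A smaller point concerns your existence step. Applying Theorem~\ref{th:fiacco_nlp_sensitivity} with parameter $(z^k,H^k)$ at the single reference pair $(\bar z,\nabla^2_{ww}L(\bar z))$ only covers Hessians close to $\nabla^2_{ww}L(\bar z)$. The $\kappa$-condition, however, allows a fixed deviation up to $\bar\kappa>0$, which is exactly the linearly convergent regime the theorem adds. You therefore need a Lipschitz estimate for the QP solution in $z^k$ that is uniform over the admissible $H^k$. Note that $\Delta w=0$ with $(\bar\lambda,\bar\mu)$ solves the QP at $z^k=\bar z$ for every such $H$, so the reference solution does not move with $H^k$.
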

\textit{Proof.}
Regard the nonlinear optimization problem
\begin{mini!}[2]
	{\substack{w\in \R^{n}}}{f(w)\label{eq:perturbed_nlp_obj} - r_f^\top w}
	{\label{eq:perturbed_nlp}}{}
	\addConstraint{h(w)-r_g}{=0 \label{eq:perturbed_nlp_eq}}
	\addConstraint{g(w)+r_h}{\leq 0 \label{eq:perturbed_nlp_ineq}},
\end{mini!}
with parameter $r = (r_f, r_g, r_h) \in \R^{n} \times \R^{m_h} \times \R^{m_g}$.
Define the parameter $r^k$ as
\begin{align}\label{eq:pertubation_definition}
	r^k := \Phi(z^{k+1}) - \Phi(z^{k}) - \nabla \tilde{\Phi}(z^{k})^\top (z^{k+1}-z^{k}),
\end{align}
where $\Phi$ is defined in Eq.~\eqref{eq:kkt_ge_definition}.
It can be verified by direct computation that the KKT conditions of the QP~\eqref{eq:qp} coincide with the KKT conditions of the NLP~\eqref{eq:perturbed_nlp} with $r = r^k$, and hence both problems have the same solution $z^{k+1}$.
Moreover, for $z^{k+1} = z^k = \bar z$, one has $\bar r = 0$, and~\eqref{eq:perturbed_nlp} reduces to the NLP~\eqref{eq:nlp} with  the solution $\bar z$.

Since for $\bar r = 0$ the NLP~\eqref{eq:perturbed_nlp} satisfies LICQ and SSOSC at $\bar z$, Theorem~\ref{th:fiacco_nlp_sensitivity}~(c) can be applied.  
Therefore, there exist constants $\delta_1 > 0$ and $0 < \gamma < \infty$ such that, for any solution $z(r)$ of~\eqref{eq:perturbed_nlp} near $\bar z = z(0)$ with $\|r\| \leq \delta_1$, it holds that
\begin{align}\label{eq:lipschitz_perturbed_nlp}
	\| z(r) - \bar z \| \leq \gamma \| r \|.
\end{align}

Next, observe that the QP~\eqref{eq:qp} is an optimization problem parametric in $z^k$.  
By introducing the variable change $\Delta w = w - w^k$, an equivalent QP is obtained, and for $z^k = \bar{z}$ the solution of this QP coincides with the solution of the NLP~\eqref{eq:nlp}.  
In this notation, the QP solution mapping satisfies $z(\bar{z}) = \bar z$ and $z(z^k) = z^{k+1}$.  
Since $H^k$ is positive definite for all $k$, SSOSC and LICQ hold at the QP solution $z(\bar z)$ for the parameter value $\bar z$.  
Hence, the assumptions of Theorem~\ref{th:fiacco_nlp_sensitivity}~(c) are satisfied for the QP~\eqref{eq:qp}.  
Therefore, there exist constants $\delta_2 > 0$ and $0 < \tilde \gamma < \infty$ such that for all $z^k$ with $\| z^k - \bar z \| \leq \delta_2$, there exists a solution $z^{k+1} = z(z^k)$ of the QP that satisfies, by Theorem~\ref{th:fiacco_nlp_sensitivity}~(c):
\begin{align}\label{eq:qp_lipschitz}
	\| z^{k+1} - \bar{z} \| \le \tilde \gamma \| z^k - \bar z \|.
\end{align}
Moreover, under these conditions and by convexity of the QP for each $k$, Theorem~\ref{th:fiacco_nlp_sensitivity}~(b) implies that the solution $z^{k+1} = z(z^k)$ is unique for all $\| z^{k} - \bar z \| \leq \delta_2$.

Next, by using the triangle inequality and~\eqref{eq:qp_lipschitz}, a bound on the step size is obtained:
\begin{align}\label{eq:step_size_bound}
	\| z^{k+1} - z^k \| \le \| z^{k+1} - \bar z \| + \| \bar z - z^k \|
	\le (1+\tilde \gamma)\,\delta_2.
\end{align}

We proceed by bounding $\| r^k \|$. 
Add and subtract $\nabla \Phi(z^{k})^\top(z^{k+1}-z^{k})$ on the right-hand side of~\eqref{eq:pertubation_definition}, regroup terms, and apply the triangle inequality:
\begin{align*}
	\| r^k \|
	&\le \| (\nabla \Phi(z^{k})^\top - \nabla \tilde \Phi(z^{k})^\top)(z^{k+1}-z^{k}) \| \\
	&\quad + \| \Phi(z^{k+1}) - \Phi(z^{k}) - \nabla \Phi(z^{k})^\top(z^{k+1}-z^{k}) \|.
\end{align*}
To bound the second term, apply the mean value theorem together with the $\omega$-condition~\eqref{eq:omega_cond}:
\begin{align*}
	&\| \Phi(z^{k+1}) - \Phi(z^{k}) - \nabla \Phi(z^{k})^\top(z^{k+1} - z^{k}) \| \\
	&= \Big\| \int_{0}^{1}
	\big(\nabla \Phi(z^{k} + t(z^{k+1} - z^{k}))^\top - \nabla \Phi(z^{k})^\top \big) (z^{k+1} - z^{k}) \, \dd t \Big\| \\
	&\le \int_{0}^{1}
	\omega \, \| t (z^{k+1} - z^{k}) \| \, \dd t \cdot \|z^{k+1} - z^{k} \|
	\le \frac{\omega}{2} \|z^{k+1} - z^{k} \|^2.
\end{align*}
Additionally, using~\eqref{eq:kappa_cond} for the first term yields
\begin{align}\label{eq:kappa_omega_rk_bound}
	\| r^k \|
	\le \Big( \kappa^k + \frac{\omega}{2} \|z^{k+1} - z^{k} \| \Big)
	\|z^{k+1} - z^{k} \|.
\end{align}
Applying the step-size bound~\eqref{eq:step_size_bound} gives
\begin{align}\label{eq:bound_rk_via_delta2}
	\| r^k \|
	\le \Big( \kappa^k + \frac{\omega}{2}(1+\tilde \gamma)\delta_2 \Big)
	\|z^{k+1} - z^{k} \|.
\end{align}
We have by assumption that $\kappa^k < \bar \kappa < \frac{1}{3\gamma}$ and, if necessary, we shrink $\delta_2$ so that
\begin{align}\label{eq:convergence_radius_discussion}
	\delta_2 \leq \frac{2}{\omega(1+\tilde{\gamma})}
	\left(\frac{1}{3\gamma}-\kappa^k\right).
\end{align}
Then, from~\eqref{eq:bound_rk_via_delta2} we obtain 
\begin{align}\label{eq:r_k_gamma_bound}
	\| r^k \| \le \frac{1}{3 \gamma } \|z^{k+1} - z^{k} \|.
\end{align}

In order to combine \eqref{eq:lipschitz_perturbed_nlp} with \eqref{eq:r_k_gamma_bound}, it must hold that $\| r^k \| \leq \delta_1$.  
This is ensured for sufficiently small $\delta_2$, since
\(
\| r^k \| \leq \frac{1}{3\gamma}\|z^{k+1}-z^{k}\|
\leq \frac{(1+\tilde \gamma)\delta_2}{3\gamma} \leq \delta_1.
\)
Therefore, it holds that 
\begin{align*}
	\| z^{k+1} - \bar z\| \leq \frac{1}{3} \| z^{k+1} - z^{k}\| \leq \frac{1}{3}\| z^{k+1} - \bar z\|	+ \frac{1}{3}\| z^{k} - \bar z\|,
\end{align*}
which implies the contraction estimate
\begin{align}\label{eq:sqp_contraction}
	\| z^{k+1} - \bar z\| \le \frac{1}{2} \| z^{k} - \bar z\|.
\end{align}
By choosing $\| z^0 - \bar z\| \le \varepsilon \le \delta_2$ and applying \eqref{eq:sqp_contraction} inductively, the sequence $\{z^{k}\}$ converges linearly to $\bar z$.  
It remains to prove the claimed order of convergence.

We combine \eqref{eq:lipschitz_perturbed_nlp} with \eqref{eq:kappa_omega_rk_bound} and use the triangle and Cauchy–Schwarz inequalities to obtain
\begin{align*}
	\| z^{k+1} - \bar z\|
	\leq 
	\gamma \kappa^k \| z^{k+1} - \bar z\|
	+ \gamma \kappa^k \| z^{k} - \bar z\|
	+ \gamma \omega \| z^{k+1} - \bar z\|^2
	+ \gamma \omega \| z^{k} - \bar z\|^2.
\end{align*}
Since~\eqref{eq:sqp_contraction} holds, there exists $\eta > 0$ with $\|z^{k+1} - \bar z \| < \eta$, which can be made sufficiently small, such that
$\gamma \omega \| z^{k+1} - \bar z\|^2 \le \gamma \omega \eta \| z^{k+1} - \bar z\|$.
Collecting the terms in $\| z^{k+1} - \bar z\|$ and moving them to the left-hand side yields
\begin{align*}
	\| z^{k+1} - \bar z\|
	\le \frac{\gamma \kappa^k}{1 - \gamma \kappa^k - \gamma \omega \eta}
	\| z^{k} - \bar z\|
	+ \frac{\gamma \omega}{1 - \gamma \kappa^k - \gamma \omega \eta}
	\| z^{k} - \bar z\|^2.
\end{align*}
Define $\alpha^k := \frac{\gamma \kappa^k}{1 - \gamma \kappa^k - \gamma \omega \eta}$ and
$\beta := \frac{\gamma \omega}{1 - \gamma \bar \kappa - \gamma \omega \eta}$ to obtain~\eqref{eq:sqp_contraction_estimate}.
If $\bar \kappa = 0$, then $\alpha^k = 0$ for all $k$, and quadratic convergence follows.
If $\kappa^k \to 0$, then $\alpha^k \to 0$ and  $\| z^{k+1} - \bar z\| \leq \underbrace{(\alpha^k + \beta \| z^{k} - \bar z\|)}_{\to 0} \| z^{k} - \bar z\|$, which implies superlinear convergence.

\qed 

We interpret the $\kappa$--$\omega$ assumptions.
First, for the exact-Hessian SQP method, we have $\bar \kappa = 0$ (and thus $\alpha^k =0$), which recovers a local quadratic convergence rate.
For quasi-Newton methods, where $H^k \to \nabla_{ww}^2 L(w^k,\lambda^k,\mu^k)$, the error in~\eqref{eq:kappa_cond} shrinks due to $\kappa^k \to 0$, and the usual superlinear convergence is recovered.
If $\kappa^k \not\to 0$, for example when using a constant, or a Gauss--Newton Hessian approximation, only linear convergence is retained.

Second, the Lipschitz constant $\gamma$ of $z(r)$ in~\eqref{eq:lipschitz_perturbed_nlp} characterizes how sensitive the solution is to perturbations.
The less sensitive it is, the larger $\kappa^k < \frac{1}{3\gamma}$ may be, that is, the larger the admissible error in the Hessian approximation.
Moreover, a smaller derivative approximation error leads to a smaller $\kappa^k$, and consequently to a smaller $\alpha^k < 1$, so that the resulting linear convergence rate is faster.

Finally, we comment on the influence of our assumptions on the convergence radius, which is, by~\eqref{eq:convergence_radius_discussion}, upper bounded by $ \varepsilon < \frac{2}{\omega(1+\tilde{\gamma})} \left(\frac{1}{3\gamma}-\kappa^k\right)$.
It follows that the smaller the error in the QP data, that is, the smaller $\kappa^k$, the larger the admissible local convergence radius.
Likewise, the less the Jacobian of $\Phi(z)$ changes, that is, the smaller $\omega$, the larger this radius becomes.
An analogous interpretation applies to the constants $\gamma$ and $\tilde \gamma$.
In the limiting case of an exact Hessian and $\omega = 0$ (that is, constant constraint Jacobians and Lagrangian Hessian), the NLP reduces to a QP, and the local convergence radius is unbounded above.
In other words, starting from any initial point, the SQP method applied to a QP converges in one iteration.

\begin{remark}\label{rem:sqp_ass}
	For ease of exposition, we assumed SSOSC and LICQ in order to apply Theorem~\ref{th:fiacco_nlp_sensitivity} directly and obtain the estimates \eqref{eq:lipschitz_perturbed_nlp} and \eqref{eq:qp_lipschitz} in the proof below.
	Alternatively, semistability and hemistability arguments as in~\cite{Bonnans1994} can be used; these yield the same estimates under the weaker assumptions of SOSC and SMFCQ; cf.~\cite[Proposition~1.37]{Izmailov2014} and \cite[Proposition~3.37]{Izmailov2014}.
	In particular, the main change in the proof is that the estimates \eqref{eq:lipschitz_perturbed_nlp} and \eqref{eq:qp_lipschitz} would follow from semistability (\cite[Proposition~1.37]{Izmailov2014}) rather than from Theorem~\ref{th:fiacco_nlp_sensitivity}~(c).
	Thus, the assumptions stated here can be relaxed to SOSC and SMFCQ, recovering the sharpest known local convergence results for the SQP method~\cite{Bonnans1994}.
	In addition to the results in~\cite{Bonnans1994}, the present result also covers the linearly convergent constrained Gauss--Newton method, which is often used in state estimation and model predictive control applications~\cite{Rawlings2017}.
\end{remark}

\subsection{Active-set stabilization for the SQP method}

Notably, Theorem~\ref{th:sqp_local_convergence} does not require that the solutions of the QP~\eqref{eq:qp} and the NLP~\eqref{eq:nlp} at $\bar z$ share the same active set at every iterate $k$. 
In fact, the active sets may coincide only in the limit and may differ at all preceding iterates~\cite{Bonnans1994}. 
Using Corollary~\ref{lem:active_set_stabilization}, we recall a known active-set stabilization property of the SQP method~\cite{Bonnans1994,Izmailov2014}. 
We provide an explicit simple proof and derive explicit stabilization radii, which will be useful in several subsequent arguments for the SQPCC case in Sec.~\ref{sec:sqpcc}. 
The proof shows that the farther the solution is from an active-set change, as quantified by large margins $-g_i(\bar{w})>0$ for inactive constraints and large multipliers $\bar{\mu}_i>0$ for active constraints, the larger the radius $\rho^{\mathrm{s}}>0$ on which the active set remains stable.

\begin{proposition}[SQP active-set stabilization]\label{th:active_set_stabilization}
	Suppose that the assumptions of Theorem~\ref{th:sqp_local_convergence} hold.
	Let $(w^0,\lambda^0,\mu^0)\in \mathcal{B}_{\varepsilon}(\bar z)$.
	
	Then there exists a constant $\rho^{\mathrm{s}}>0$ such that, after finitely many iterations, the iterates satisfy
	$
	(w^k,\lambda^k,\mu^k)\in \mathcal{B}_{\rho^{\mathrm{s}}}(\bar z) \cap \mathcal{B}_{\varepsilon}(\bar z),
	$
	and the active set stabilizes in the sense that
	\begin{align}\label{eq:active_set_stabilization}
		\mathcal{A}_+(\bar w,\bar\mu)=\mathcal{A}_+(w^k,\mu^k),
		\qquad
		\A_+(\bar w,\bar \mu ) \subseteq  \mathcal{A}(w^k)\subseteq
		\mathcal{A}_+(\bar w,\bar\mu)\cup\mathcal{A}_0(\bar w,\bar\mu).
	\end{align}
	
	If, in addition, strict complementarity holds at $\bar w$, then
	$
	\mathcal{A}(w^k)=\mathcal{A}(\bar w).
	$
\end{proposition}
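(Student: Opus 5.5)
The plan is to view each SQP iterate as the solution of a parametric problem and transfer the stability of active sets from Corollary~\ref{lem:active_set_stabilization}. In the proof of Theorem~\ref{th:sqp_local_convergence}, the new iterate $z^{k+1}$ is a KKT point of the perturbed NLP~\eqref{eq:perturbed_nlp} with parameter $r=r^k$. Moreover, $\|z^{k+1}-\bar z\|\le\gamma\|r^k\|$ and $\|r^k\|\le\frac{1}{3\gamma}\|z^{k+1}-z^k\|\le \frac{1}{3\gamma}(\|z^{k+1}-\bar z\|+\|z^k-\bar z\|)\to 0$. The constraint of~\eqref{eq:perturbed_nlp} reads $g(w)+r_h\le 0$. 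Its active set at $w^{k+1}$ is therefore exactly the set of indices where $g_i(w^k)+\nabla g_i(w^k)^\top\Delta w^k=0$. This is the QP active set, and it carries the multipliers $\mu^{k+1}$.

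I will apply Corollary~\ref{lem:active_set_stabilization} to~\eqref{eq:perturbed_nlp} at $\bar r=0$, whose LICQ and SSOSC hold because they hold for~\eqref{eq:nlp}. This yields a radius $\rho>0$ such that, for $\|r\|<\rho$, the strictly active set and the inactive set of $z(r)$ coincide with $\A_+(\bar w,\bar\mu)$ and $\A^{\mathrm C}(\bar w)$. The radii have the explicit form $\bar\mu_i/L_{\varphi_i}$ and $-g_i(\bar w)/L_{\varphi_i}$. Since the iterates converge linearly with $\|z^{k}-\bar z\|\le 2^{-k}\varepsilon$ by~\eqref{eq:sqp_contraction}, I define $\rho^{\mathrm s}:=\frac{3\gamma\rho}{2}$, up to constants. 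Then for all $k$ with $z^k\in\B_{\rho^{\mathrm s}}(\bar z)$ we get $\|r^k\|<\rho$, and such $k$ is reached after finitely many iterations.

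The key subtlety is that Corollary~\ref{lem:active_set_stabilization} speaks about the unique local solution map $z(r)$. I must make sure that $z^{k+1}=z(r^k)$, not merely some KKT point of the perturbed problem. To handle this, I will use local uniqueness in Theorem~\ref{th:fiacco_nlp_sensitivity}(b): $z^{k+1}$ lies in a small neighborhood of $\bar z$ (by the contraction estimate), and $r^k$ is small. Shrinking $\rho^{\mathrm s}$ so that both fall inside the uniqueness neighborhood, $z^{k+1}$ must coincide with $z(r^k)$. I expect this identification to be the main obstacle.

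Once the identification holds, I translate the active-set statements. The equality $\A_+(w^{k},\mu^{k})=\A_+(\bar w,\bar\mu)$ and the inclusion $\A(w^k)\subseteq \A(\bar w)=\A_+\cup\A_0$ follow from the preserved inactive set. The inclusion $\A_+\subseteq\A(w^k)$ holds because strictly active indices have $\mu_i^k>0$, and complementarity then forces activity. This gives~\eqref{eq:active_set_stabilization}, with the index shifted by one relative to the iteration at which the ball is entered. Finally, under strict complementarity we have $\A_0(\bar w,\bar\mu)=\emptyset$, so the chain of inclusions collapses to $\A(w^k)=\A(\bar w)$.
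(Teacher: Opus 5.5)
Your route is the paper's route. You read $z^{k+1}$ as a KKT point of \eqref{eq:perturbed_nlp} with $r=r^k$, apply Corollary~\ref{lem:active_set_stabilization} at $\bar r=0$, and convert the radius in $r$ into a radius in $\|z^k-\bar z\|$ through the bound on $\|r^k\|$. Two of your points are more careful than the paper. First, the active set of $g(w)+r^k_h\le 0$ at $w^{k+1}$ is exactly the linearized (QP) active set. Second, $z^{k+1}$ must be identified with $z(r^k)$. LICQ plus SSOSC give strong regularity, hence a uniform neighborhood in which the perturbed KKT point is unique; the paper already uses this tacitly in \eqref{eq:lipschitz_perturbed_nlp}.

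The genuine gap is your claim that the equality $\mathcal{A}_+(w^k,\mu^k)=\mathcal{A}_+(\bar w,\bar\mu)$ follows. The radii $\bar\mu_i/L_{\varphi_i}$ and $-g_i(\bar w)/L_{\varphi_i}$ only control indices in $\mathcal{A}_+(\bar w,\bar\mu)\cup\mathcal{A}^{\mathrm C}(\bar w)$. They therefore yield only $\mathcal{A}_+(\bar w,\bar\mu)\subseteq\mathcal{A}_+(w^k,\mu^k)$ and $\mathcal{A}(w^k)\subseteq\mathcal{A}(\bar w)$. For $i\in\mathcal{A}_0(\bar w,\bar\mu)$ the switching function vanishes at $\bar r=0$, and nothing prevents $\mu_i^k>0$.

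The equality you quote from Corollary~\ref{lem:active_set_stabilization} is not what its proof shows, and it is false. For $\min \tfrac12(w-p)^2$ s.t.\ $w\le 0$, one has $\bar\mu=0$ at $\bar p=0$ but $\mu(p)=p>0$ for $p>0$.

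The same failure occurs along SQP iterates. Take $f(w)=\tfrac12(w_1-w_2^2)^2+\tfrac12 w_2^2$ and $g(w)=-w_1$. Then $\bar w=0$, $\bar\mu=0$, LICQ holds, and $\nabla^2 f(0)=I$. Start from $w^0=(0,t_0)$ with $t_0\neq 0$ small. Each exact-Hessian QP is positive definite (determinant $2t_k^2+1$) and has its constraint active. The iterates are $w^{k+1}=(0,t_{k+1})$ with $t_{k+1}=4t_k^3/(6t_k^2+1)\neq 0$, and $\mu^{k+1}=(t_k^2-2t_k^4)/(6t_k^2+1)>0$. Hence $\mathcal{A}_+(w^k,\mu^k)=\{1\}\neq\emptyset=\mathcal{A}_+(\bar w,\bar\mu)$ for every $k\ge 1$, whatever $\rho^{\mathrm s}$ is.

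The paper's proof has exactly the same gap: it also shows only persistence of strictly active and inactive indices. So the statement itself must be weakened. What your argument, and the paper's, actually proves is $\mathcal{A}_+(\bar w,\bar\mu)\subseteq\mathcal{A}_+(w^k,\mu^k)\subseteq\mathcal{A}(w^k)\subseteq\mathcal{A}_+(\bar w,\bar\mu)\cup\mathcal{A}_0(\bar w,\bar\mu)$. Under strict complementarity this chain collapses to equalities, so your final claim is fine.

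If you only want the qualitative \emph{after finitely many iterations} statement, you can even drop the identification step. The limits $\mu^k\to\bar\mu$ and $g_i(w^{k-1})+\nabla g_i(w^{k-1})^\top\Delta w^{k-1}\to g_i(\bar w)$ already give both inclusions. The perturbation route is needed only for the explicit radii.
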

\textit{Proof.}
We make use of Corollary~\ref{lem:active_set_stabilization} and apply it to the perturbed NLP~\eqref{eq:perturbed_nlp}, parameterized by $p = r^k$.
Note that $r^k \to 0$ as $k \to \infty$, since the right-hand side of~\eqref{eq:kappa_omega_rk_bound} is uniformly bounded and $\|z^{k+1}-z^{k}\|\to 0$.
By adapting~\eqref{eq:active_set_on_mu} to this setting we obtain that a strictly active constraint $g_i(w^k) = 0$ with $\bar \mu_i >0$ stays such for:
\begin{align}\label{eq:active_set_on_r}
	\| r\| < \frac{\bar \mu_i}{L_{\varphi_i}}.
\end{align}
Next, we want to express this in the distance of $z^{k}$ to $\bar{z}$ for some finite $k$.
We know from~\eqref{eq:step_size_bound} in the proof of Theorem~\ref{th:sqp_local_convergence} that 
$\|z^{k+1} - z^{k} \| \leq (1+\tilde \gamma )\varepsilon^k$, where $\varepsilon^k:=\| z^{k} - \bar z \|$ and $\varepsilon^k \to 0$, with $\varepsilon^0 = \varepsilon$.
Further, for a sufficiently large $k$ there exist a constant $\eta>0$ satisfying $2\eta > (1+\tilde \gamma )\varepsilon^k $.
This implies that $\| z^{k+1}- \bar z\| \leq 2\eta$ and from \eqref{eq:kappa_omega_rk_bound} we obtain
\begin{align}\label{eq:bound_r_active_set}
	\|r^k \| \leq (\kappa^k + {\omega \eta}) (1+\tilde \gamma )\varepsilon^k
\end{align}
It follows from~\eqref{eq:active_set_on_r} and~\eqref{eq:bound_r_active_set} that $g_i(w^k) = 0$ remains strictly active at $w^k$ if it holds that
$
(\kappa^k + {\omega \eta}) (1+\tilde \gamma )\varepsilon^k < \frac{\bar \mu_i}{L_{\varphi_i}}.
$
Because $\varepsilon^k$ is shrinking this can always be made true for a sufficiently large but finite $k$.
In particular, we need:
\begin{align}\label{eq:epsilon_cond}
	\varepsilon^{k} <  \frac{\bar\mu_i}{L_{\varphi_i} (\kappa^k + {\omega \eta}) (1+\tilde \gamma )} := \rho_i^\mu, \ \forall  i \in \A_+(\bar w,\bar \mu).
\end{align}
Similarly, all inactive constraints should stay inactive, and using similar reasoning for $i \in \A^{\mathrm{C}}(\bar{w})$ we have from $\|r \| \leq \frac{-g(\bar w)}{L_{\phi_i}}$:
\begin{align}\label{eq:epsilon_cond_inactive}
	\varepsilon^{k} <  \frac{-g_i(\bar w)}{L_{\varphi_i} (\kappa^k + {\omega \eta}) (1+\tilde \gamma )}  := \rho_i^g,\ \forall  i \in \A^{\mathrm{C}}(\bar{w}).
\end{align}
Moreover, from Theorem~\ref{th:fiacco_nlp_sensitivity}, the Lipschitz estimate on $z(r)$ is only valid for $\|r \| < \delta_1$, which combined with~\eqref{eq:epsilon_cond} yields the requirement $\varepsilon^k <  \frac{\delta_1}{(\kappa^k + {\omega \eta}) (1+\tilde \gamma )} =: \hat{\delta}$.

For large enough but finite $k$, we can make sure both \eqref{eq:epsilon_cond} and \eqref{eq:epsilon_cond_inactive} hold true.
In particular, we have that for
$
\rho^{\mathrm{s}} = \varepsilon^k < \min \Big(  \min_{ i \in \A_+(\bar{w},\bar{\mu}) } \rho_i^\mu , \min_{ i \in \A^\mathrm{C}(\bar{w})} \rho^g_i , \hat{\delta} \Big),	
$
and for $\|z^{k} - \bar{z} \| \leq \rho^{\mathrm{s}}$ it holds that  $\A_+(w^k,\mu^k) = \A_+(\bar{w},\bar{\mu})$ and
$\A^\mathrm{C}(w^k) = \A^\mathrm{C}(\bar{w})$.

On the other hand, for weakly active constraints, the switching function $\varphi_i(r) = \mu_i(r) + g_i(w(r))$ may change its sign arbitrarily close to $r=0$, and hence such constraints may be either active or inactive at $w^{k}$.
From this, the conclusion of the theorem and \eqref{eq:active_set_stabilization} follow.
Under strict complementarity $\A_0(\bar{w},\bar{\mu}) = \emptyset$ we obtain from \eqref{eq:active_set_stabilization} that 	$\mathcal{A}(w^k)=\mathcal{A}(\bar w)$, and the proof is complete.
\qed 

Note that, depending on the values of $\mu_i$ and $g_i(\bar w)$, no a priori relation between stabilization radius $\rho^{\mathrm{s}}$ and SQP local convergence  radius $\varepsilon$ can be guaranteed.
Consequently, full active-set stabilization may occur immediately, intermediately or only asymptotically, and weakly active constraints may become active only in the limit.

\begin{example}(Active-set stabilization)
	Consider the convex NLP from~\cite[Example 4.15]{Izmailov2014}:
	\begin{align*}
		\min_{w\in \R}\; w^2 + w^4
		\quad \mathrm{s.t.} \quad w\geq 0.
	\end{align*}
	The unique local minimizer is $\bar w = 0$ with multiplier $\bar \mu = 0$, and it satisfies LICQ and SSOSC.
	If the SQP method is initialized at a sufficiently small $w^0 > 0$, then the iterates converge quadratically to $\bar w = 0$, but satisfy $w^{k+1}= \frac{4(w^k)^3}{6(w^k)^2+1} > 0$ for all $k$, and identify the weakly active constraint only in the limit (with $\A(w^k) = \emptyset$ for all $k$,  and $\A(\bar w) = \{1\}$), as predicted by the estimate~\eqref{eq:active_set_stabilization}.
	If the objective is modified to $(w+1)^2 + (w+1)^4$, the solution remains $\bar w = 0$, but now $\bar \mu = 6$, so strict complementarity holds.
	In line with Proposition~\ref{th:active_set_stabilization}, it can be verified that in this case the SQP method identifies the active set in finitely many iterations.
\end{example}

\section{Mathematical programs with complementarity constraints}\label{sec:mpcc_theory}
The MPCC~\eqref{eq:mpcc_intro} can be reformulated into a standard nonlinear program (NLP), by replacing \eqref{eq:mpcc_intro_comp} by a set of inequality constraints in~\eqref{eq:mpcc_nlp_compG}--\eqref{eq:mpcc_nlp_bilinear}:
\begin{mini!}[2]
	{\substack{w\in \R^{n}}}{f(w)\label{eq:mpcc_nlp_obj}}
	{\label{eq:mpcc_nlp}}{}
	\addConstraint{h(w)}{=0 \label{eq:mpcc_nlp_eq}}
	\addConstraint{g(w)}{\leq0 \label{eq:mpcc_nlp_ineq}}
	\addConstraint{G(w)}{\geq0 \label{eq:mpcc_nlp_compG}}
	\addConstraint{H(w)}{\geq0 \label{eq:mpcc_nlp_compH}}
	\addConstraint{G_i(w)H_i(w)}{\leq0, \quad i=1,\ldots,m. \label{eq:mpcc_nlp_bilinear}}
\end{mini!}
The nonnegativity of $G(w)$ and $H(w)$ allow a few other equivalent NLP reformulations, e.g., \eqref{eq:mpcc_nlp_bilinear} can be replaced by $G_i(w)H_i(w) = 0$, or an aggregated formulation with $G(w)^\top H(w) \leq 0$ or $G(w)^\top H(w) = 0$, but none of them avoids the degeneracy of NLP reformulations.
In particular,  most of the standard NLP theory cannot be applied directly, since the constraints \eqref{eq:mpcc_nlp_compG}-\eqref{eq:mpcc_nlp_bilinear} lead to violation of the MFCQ (and thus SMFCQ and LICQ) at all feasible points.
This means the set of Lagrange multipliers is necessarily unbounded, complicating computational and theoretical considerations for this problem class.
Instead, one usually regards tailored MPCC notions and optimality conditions.
They all rely on the piecewise nature of the MPCC's feasible set.

\subsection{First-order optimality conditions}
Algebraic conditions for stationarity of regular NLPs are given by the KKT conditions~\eqref{eq:kkt}. 
For MPCCs, there are more nuances, and there are several first-order stationarity concepts, which we review briefly here. 
For surveys see~\cite{Kim2020,Scheel2000}.
Unlike the standard KKT conditions, some of the MPCC-multiplier-based stationarity concept may allow first-order descent directions~\cite{Leyffer2007,Scheel2000}.

We call an NLP regular if it satisfies some first- and second-order regularity conditions, e.g., LICQ and SSOSC.
Most of the MPCC theory relies on defining auxiliary regular NLPs related to the MPCC.
For this purpose, we define the following index sets which depend on a feasible point $w$ of the MPCC~\eqref{eq:mpcc_intro}:
\begin{align*}
	\I_{0+}(w) &= \{i \in \{1, \ldots, m \} \mid G_i(w)=0, H_i(w)>0\},\\
	\I_{+0}(w) &=	\{i \in \{1, \ldots, m\} \mid G_i(w)>0, H_i(w)=0\},\\
	\I_{00}(w) &= \{i \in \{1, \ldots, m \} \mid G_i(w)=0, H_i(w)=0\}.
\end{align*}

If clear from the context, we omit the argument $w$ in the index sets.
For the standard inequality constraints $g(w) \leq 0$, we define the active sets as in~\eqref{eq:active_sets}--\eqref{eq:active_sets_strict}.

We call indices $i \in \I_{00}(w)$ degenerate index pairs, and indices $i \in \{1,\ldots,m\} \setminus \I_{00}(w)$ nondegenerate.
Most theoretical difficulties arise in the presence of degenerate indices.
If $\I_{00}(w) = \emptyset$, then the lower-level strict complementarity (LLSC) condition is said to hold. 
This assumption is usually regarded as too restrictive and is therefore almost never imposed in theoretical considerations~\cite{Scheel2000}.

To compactly define branch NLPs below, we first define a few more index sets.
Let $\tilde{\mathcal{P}}(w) = \{ \tilde \I(w)  \subseteq  \I_{00}(w)\}$ denote the powerset of the degenerate indices.
Then, at a feasible point $w\in \Omega$, we define the set of index sets: 
\begin{align*}
	\mathcal{P}(w)  = \{ \I \subseteq \{1,\ldots,m\}\mid \exists\, \tilde{\I}  \in \tilde{\mathcal{P}}(w),\, \I = \I_{0+}(w)\cup \tilde{\I}\}.
\end{align*}
Moreover, for a given $\I \in \mathcal{P}(w)$ we define its complement $\I^\mathrm{C} = \{1,\ldots,m\} \setminus \I$.
Clearly, it holds that $\I^\mathrm{C} \subseteq \I_{+0}(w)\cup \I_{00}(w)$.
It can be seen that the number of sets in $\mathcal{P}(w)$ is equal to $N = 2^{c}$, where $c$ is the number of degenerate indices.

Let $\bar w$ be a feasible point of the MPCC~\eqref{eq:mpcc_intro}.
Using the partitions $\I \in \mathcal P(\bar w)$, we define the $N$ so-called branch NLPs, denoted by $\BNLP_\I$, obtained by fixing the corresponding branches of the L-shaped complementarity set:
\begin{mini!}[2]
	{\substack{w\in \R^{n}}}{f(w)\label{eq:bnlp_obj}}
	{\label{eq:bnlp}}{}
	\addConstraint{h(w)}{=0 \label{eq:bnlp_eq}}
	\addConstraint{g(w)}{\leq0 \label{eq:bnlp_ineq}}
	\addConstraint{G_i(w) =0,\ H_i(w) }{\geq 0,\quad \forall i \in \I(\bar{w}) \label{eq:bnlp_comp1}}
	\addConstraint{G_i(w) \geq 0,\ H_i(w)} {= 0,\quad \forall i \in \I^\mathrm{C}(\bar{w}). \label{eq:bnlp_comp2}}
\end{mini!}
Another useful NLP associated to the MPCC~\eqref{eq:mpcc_intro} is the so-called relaxed NLP (RNLP), which reads as:
\begin{mini!}[2]
	{\substack{w\in \R^{n}}}{f(w)\label{eq:rnlp_obj}}
	{\label{eq:rnlp}}{}
	\addConstraint{h(w)}{=0 \label{eq:rnlp_eq}}
	\addConstraint{g(w)}{\leq0 \label{eq:rnlp_ineq}}
	\addConstraint{G_i(w) =0,\ H_i(w) }{\geq 0,\quad \forall i \in \I_{0+}(\bar{w}) \label{eq:rnlp_comp1}}
	\addConstraint{G_i(w) \geq 0,\ H_i(w)} {= 0,\quad \forall i \in \I_{+0}(\bar{w}) \label{eq:rnlp_comp2}}
	\addConstraint{G_i(w) \geq 0,\ H_i(w)} {\geq 0,\quad \forall i \in \I_{00}(\bar{w}). \label{eq:rnlp_bi}}
\end{mini!} 
Observe that if LLSC holds, i.e., $\I_{00}(\bar{w}) = \emptyset$, then there is only one BNLP and it coincides with the RNLP.
Clearly, the feasible set of each $\BNLP_\I$ is contained in the feasible set of the MPCC, which in turn is contained in the feasible set of the RNLP.
Consequently, if $\bar{w}$ is a local minimizer of the RNLP, then it is a local minimizer of the MPCC. 
The converse is not true. 
The point $\bar{w}$ is a local minimizer of the MPCC if and only if it is a local minimizer of every $\BNLP_\I$~\cite{Luo1996,Scheel2000}. 
In view of this, these auxiliary NLPs provide a natural framework for defining tailored MPCC notions, some of which we list next.

\begin{definition}
	The MPCC--Lagrangian is the \textit{standard} Lagrangian for the RNLP, and reads as:
	\begin{align*}
		\mathcal{L}(w,\lambda,\mu,\xi,\nu) = f(w) + \lambda^\top h(w) + \mu^\top g(w)- \xi^\top G(w)- \nu^\top H(w),
	\end{align*}
	with the MPCC--Lagrange multipliers $\lambda \in \R^{m_h}$, $\mu \in \R^{m_g}$, $\xi \in \R^{m}$ and $\nu \in \R^{m}$.
\end{definition}
Note that the MPCC--Lagrangian $\mathcal L(\cdot)$ differs from the standard Lagrangian $L(\cdot)$ for the NLP reformulation of the MPCC \eqref{eq:mpcc_nlp}, by omitting the terms $G_i(w)H_i(w)$ and their multipliers.
Observe that the MPCC--Lagrangian coincides with the usual Lagrangian of the $\BNLP_\I$ and is independent of the index~$\I$.

\begin{definition}
	The MPCC \eqref{eq:mpcc_intro} is said to satisfy MPCC--LICQ, MPCC--SMFCQ, or MPCC--MFCQ at a feasible point $\bar w$ if the corresponding RNLP associated with $\bar w$ satisfies, respectively, the LICQ, the SMFCQ (with the corresponding MPCC--Lagrange multipliers), or the MFCQ at the same point $\bar w$.
\end{definition}

By applying the KKT conditions to the RNLP~\eqref{eq:rnlp}, we obtain the so-called strong stationarity (S-stationarity) concept.
In particular, a feasible point $\bar{w}$ is called S-stationary if there exist MPCC--Lagrange multipliers $\bar{\lambda},\bar{\mu},\bar{\xi}$ and $\bar{\nu}$ such that:
\begin{align*}
	&\nabla_{w} \mathcal{L}(\bar{w},\bar{\lambda},\bar{\mu},\bar{\xi}, \bar{\nu}) = 0,\\
	&h(\bar{w}) = 0,\\
	&0 \leq \bar{\mu} \perp -g(\bar{w}) \geq 0,\\
	& G_i(\bar{w}) = 0,\; \bar{\xi}_i \in \R,\; H_i(\bar{w}) \geq0,\; \bar{\nu}_i = 0,\;  & \forall i \in 	\I_{0+}(\bar{w}),\\
	& G_i(\bar{w}) \geq 0,\; \bar{\xi}_i = 0,\; H_i(\bar{w}) = 0,\; \bar{\nu}_i \in  \R,\;  & \forall i \in 	\I_{+0}(\bar{w}),\\
	& G_i(\bar{w}) =0,\; \bar \xi_i \geq 0, \; & \forall i \in \mathcal{I}_{00}(\bar{w}),\\
	& H_i(\bar{w}) =0,\; \bar \nu_i \geq 0, \; &\forall i \in 	\mathcal{I}_{00}(\bar{w}).
\end{align*}
Note that the last two conditions originate from the complementarity slackness, but are reduced since active sets are known at $\bar w$.
Weaker stationarity concepts are derived from alternative auxiliary NLPs and by employing tools from nonsmooth calculus~\cite{Outrata1999,Scheel2000}.
They are characterized by relaxed sign restrictions on $\bar\xi_i$ and $\bar\nu_i$ for $i \in \I_{00}(\bar w)$, and read as follows:

\begin{itemize}
	\item Weak Stationarity (W-stationarity): if $\bar{\xi}_i \in \R$ and $\bar{\nu}_i \in \R$ for all $i \in \mathcal{I}_{00}(\bar{w})$.
	\item Abadie Stationarity (A-stationarity): if $\bar{\xi}_i \geq 0$ or $\bar{\nu}_i \geq0$ for all $i \in \mathcal{I}_{00}(\bar{w})$.
	\item Clarke Stationarity (C-stationarity):  if $\bar{\xi}_i\bar{\nu}_i \geq0$ for all $i \in \mathcal{I}_{00}(\bar{w})$.
	\item Mordukhovich Stationarity (M-stationarity):  if either $\bar{\xi}_i >0$ and $\bar{\nu}_i >0$ or $\bar{\xi}_i\bar{\nu}_i =0$ for all $i \in \mathcal{I}_{00}(\bar{w})$.
\end{itemize}
The stationarity concepts are ordered according to decreasing strength as follows: $\mathrm{S}$, $\mathrm{M}$, $\mathrm{C}$, $\mathrm{A}$, and $\mathrm{W}$.
Observe that if LLSC holds, then all multiplier-based stationarity concepts coincide and they reduce to S-stationarity.
Except for S-stationarity, all other stationarity concepts may admit first-order descent directions~\cite{Leyffer2007,Scheel2000}, even if MPCC--LICQ holds.
A stationarity concept, which does not suffer from this is the so-called B-stationarity, which we recall next.

Let $\Omega$ denote the feasible set of the MPCC~\eqref{eq:mpcc_intro}.
Denote by $\mathcal{T}_{\Omega}(w)$ the tangent cone at $w\in\Omega$, cf.~\cite[Definition 12.2]{Nocedal2006}.
A first-order necessary condition for locally optimality reads as: 
if a point $\bar{w}$ is locally optimal, then there is no feasible first-order descent direction,
that is, $ \nabla f(\bar{w})^\top d \geq 0\; \text{ for all } d \in \mathcal{T}_{\Omega}(\bar{w})$.
A point satisfying this variational inequality is called B-stationary.
For regular NLPs (e.g., satisfying LICQ), B-stationarity is simply equivalent to the KKT conditions. 
For MPCCs, there also exist purely algebraic but combinatorial characterizations of B-stationarity~\cite{Scheel2000}.
Here, we consider a useful characterization due to Hu and Ralph~\cite{Hu2002}, formulated in terms of BNLPs in the following definition.
\begin{definition}\label{def:b_stat_bnlp}
	A feasible point $\bar{w} \in \Omega$ of the MPCC~\eqref{eq:mpcc_intro} is a piecewise stationary or B-stationary point of the MPCC~\eqref{eq:mpcc_intro} if $\bar{w}$ is a stationary point of the $\BNLP_\I$ for each $\I \in \mathcal{P}(\bar{w})$, that is for each partition, there exists Lagrange multipliers $\bar{\lambda}^{\I},\, \bar{\mu}^{\I},\, \bar{\xi}^{\I}$ and $\bar{\nu}^{\I}$ such that:
	\begin{subequations}\label{eq:kkt_bnlp}
		\begin{align}
			&\nabla_{w} \mathcal{L}(\bar{w},\bar{\lambda}^\I,\bar{\mu}^\I,\bar{\nu}^\I,\bar{\xi}^\I) = 0,\\
			&h(\bar{w}) = 0,\\
			&0 \leq \bar{\mu}^\I \perp -g(\bar{w}) \geq 0,\\
			&G_i(\bar w) = 0,  & &\forall i \in \I,\\
			&H_i (\bar w) = 0  & &\forall i \in \I^{\mathrm{C}},\\
			&0 \leq  H_i(\bar{w}) \perp \bar{\nu}_i^\I \geq 0,  & &\forall i \in \I,\\
			&0 \leq G_i(\bar{w}) \perp \bar{\xi}_i^\I \geq 0,   & &\forall i \in \I^{\mathrm{C}}.
		\end{align}
	\end{subequations}
\end{definition}
S-stationarity is the only stationarity concept that can be shown to be equivalent to B-stationarity in a straightforward manner; see~\cite[Theorem~4]{Scheel2000}. 
If $\bar{w}$ is an S-stationary point of the MPCC~\eqref{eq:mpcc_intro}, then it is also B-stationary. 
Conversely, if the MPCC--LICQ holds at $\bar{w}$, then every B-stationary point is S-stationary. 
The MPCC--LICQ condition cannot be relaxed; if it fails to hold, there may exist B-stationary points that are not S-stationary. 
The next weaker constraint qualification, MPCC-MFCQ, is already insufficient to guarantee S-stationarity; for example, under MPCC--MFCQ a B-stationary point may be only M-stationary, see~\cite[Example~3]{Scheel2000}.

Additionally, if $\bar w$ is a B-stationary point at which MPCC--LICQ holds, then 
for each $\I \in \mathcal P(\bar w)$ there exists a unique multiplier vector $(\bar\lambda^{\I}, \bar\mu^{\I}, \bar\xi^{\I}, \bar\nu^{\I})$ associated with $\bar w$ and the corresponding $\BNLP_\I$; since the gradient of the MPCC--Lagrangian is independent of $\I$ in~\eqref{eq:kkt_bnlp}, and active sets coincide for all $\BNLP_\I$ at $\bar w$, these multipliers are also independent of $\I$, that is, they are identical for all branch NLPs.
Notably, this avoids the combinatorial complexity of verifying B-stationarity, since it is not necessary to check \eqref{eq:kkt_bnlp} for all $\BNLP_\I$.
We highlight this fact in the following proposition, which will be useful in the analysis below.
\begin{proposition}(\cite[Proposition 4.3.5 ]{Luo1996})\label{prop:uniqe_mpcc_multipliers}
	Let $\bar w\in\Omega$ be a B-stationary point of the MPCC \eqref{eq:mpcc_intro} at which the MPCC--LICQ holds. 
	Then the multiplier vector $(\bar\lambda,\bar\mu,\bar\xi,\bar\nu)$ satisfying the KKT system \eqref{eq:kkt_bnlp} is unique. 
	In particular, if $(\bar\lambda^{\I},\bar\mu^{I},\bar\xi^{\I},\bar\nu^{\I})$ is any multiplier quadruple satisfying \eqref{eq:kkt_bnlp} for some index $\I \in \mathcal{P}(\bar w)$, then $(\bar\lambda^{\I},\bar\mu^{\I},\bar\xi^{\I},\bar\nu^{\I}) = (\bar\lambda,\bar\mu,\bar\xi,\bar\nu)$.
\end{proposition}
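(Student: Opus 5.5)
The plan is to reduce everything to a statement about a single linear system in the multipliers, and then use the MPCC--LICQ to conclude that this system has at most one solution. Fix an arbitrary $\I \in \mathcal P(\bar w)$ and let $(\bar\lambda^{\I},\bar\mu^{\I},\bar\xi^{\I},\bar\nu^{\I})$ satisfy \eqref{eq:kkt_bnlp}. First, I would record which multiplier components are forced to vanish by the complementarity conditions in \eqref{eq:kkt_bnlp}.
\begin{itemize}
\item[(i)] For $i \notin \A(\bar w)$, one has $g_i(\bar w)<0$, hence $\bar\mu^{\I}_i=0$.
\item[(ii)] For $i\in\I$ with $H_i(\bar w)>0$, that is $i\in\I_{0+}(\bar w)$, one has $\bar\nu^{\I}_i=0$.
\item[(iii)] For $i\in\I^{\mathrm C}$ with $G_i(\bar w)>0$, that is $i\in\I_{+0}(\bar w)$, one has $\bar\xi^{\I}_i=0$.
\item[(iv)] Since $\I\supseteq \I_{0+}(\bar w)$ and $\I^{\mathrm C}\subseteq \I_{+0}(\bar w)\cup\I_{00}(\bar w)$, every $i\in\I_{0+}(\bar w)$ lies in $\I$, so $\bar\xi^{\I}_i$ is free and $\bar\nu^{\I}_i=0$.
\item[(v)] Symmetrically, every $i\in\I_{+0}(\bar w)$ lies in $\I^{\mathrm C}$, so $\bar\nu^{\I}_i$ is free and $\bar\xi^{\I}_i=0$.
\item[(vi)] For $i\in\I_{00}(\bar w)$, both $\bar\xi^{\I}_i$ and $\bar\nu^{\I}_i$ may be nonzero, whichever side of the partition $i$ lies on.
\end{itemize}
Thus, independently of $\I$, the only components that can be nonzero are $\bar\lambda^{\I}$, $\bar\mu^{\I}_{\A(\bar w)}$, $\bar\xi^{\I}_{\I_{0+}\cup\I_{00}}$ and $\bar\nu^{\I}_{\I_{+0}\cup\I_{00}}$.

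These index sets are precisely the constraints active at $\bar w$ in the RNLP~\eqref{eq:rnlp}. Substituting the zero components into the stationarity equation $\nabla_w\mathcal L=0$ therefore gives
\begin{align*}
\nabla f(\bar w)+\nabla h(\bar w)\bar\lambda^{\I}+\nabla g_{\A(\bar w)}(\bar w)\bar\mu^{\I}_{\A(\bar w)}-\sum_{i\in\I_{0+}\cup\I_{00}}\bar\xi^{\I}_i\nabla G_i(\bar w)-\sum_{i\in\I_{+0}\cup\I_{00}}\bar\nu^{\I}_i\nabla H_i(\bar w)=0.
\end{align*}
This is a linear system whose coefficient matrix consists of the gradients of all constraints active at $\bar w$ in the RNLP. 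The coefficient matrix does not depend on $\I$.

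By the MPCC--LICQ, these gradients are linearly independent, so the system has at most one solution. Two multiplier quadruples satisfying \eqref{eq:kkt_bnlp}, for the same or for different $\I$, agree on the possibly nonzero components because the system has a unique solution. They agree on the remaining components because those are zero for both. Since $\bar w$ is B-stationary, a solution exists for every $\I$, so the common vector $(\bar\lambda,\bar\mu,\bar\xi,\bar\nu)$ is well defined and coincides with each $(\bar\lambda^{\I},\bar\mu^{\I},\bar\xi^{\I},\bar\nu^{\I})$. This proves the claim.

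The main obstacle is the bookkeeping in the first step. One must verify that the zero pattern imposed by \eqref{eq:kkt_bnlp} is identical for all partitions $\I$, which relies on $\I_{0+}\subseteq\I$ and $\I_{+0}\subseteq\I^{\mathrm C}$. One must also verify that the surviving index set coincides exactly with the active set of the RNLP, so that the MPCC--LICQ applies to that set of gradients. The sign conditions in \eqref{eq:kkt_bnlp} are not needed for the uniqueness argument itself.
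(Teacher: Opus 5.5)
Your proposal is correct and follows the reasoning the paper gives in the paragraph just before the proposition; the paper otherwise only cites Luo et al.\ for this result. That reasoning is that the stationarity equation does not depend on $\I$, and the active constraint gradients at $\bar w$ are the same for every $\BNLP_\I$ and equal those of the RNLP, so MPCC--LICQ leaves only one possible multiplier vector. Your explicit check of the zero pattern, using $\I_{0+}\subseteq\I$ and $\I_{+0}\subseteq\I^{\mathrm{C}}$ and identifying the surviving indices with the RNLP active set, spells out what the paper's phrase ``active sets coincide for all $\BNLP_\I$'' leaves implicit.
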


In summary, B-stationarity is the tightest stationarity concept for MPCCs, and under MPCC--LICQ  (and the weaker MPCC--SMFCQ~\cite{Scheel2000}) it is equivalent to S-stationarity, which is purely algebraic and does not require any combinatorial exploration of all BNLPs. 
This is the setting considered in this paper.

\subsection{Second-order optimality conditions}
The second-order sufficient conditions for MPCCs are also defined in terms of the RNLP and BNLPs.
The MPCC--SOSCs and the stronger MPCC--SSOSC are formulated at an S-stationary point so that a single set of MPCC--Lagrange multipliers is valid for all BNLPs (cf. Proposition~\ref{prop:uniqe_mpcc_multipliers}).
The so-called MPCC critical cone is obtained as the union of the usual critical cones of all $\BNLP_{\I}$ at $\bar w$~\cite{Luo1996}.
Analogous to the standard NLP case, the MPCC--SOSC  can be strengthened by enlarging the critical cone via dropping the inequalities corresponding to zero multipliers, and we define the MPCC strong critical cone~\cite{Luo1996,Scheel2000}:

\begin{align*}
	\mathcal{C}_{\mathrm{MPCC}}^{\mathrm{S}}(\bar{w}) := \bigcup_{\I \in \mathcal P(\bar w)}
	\Big\{ d\in\R^n \ \Big|\
	& \nabla h(\bar{w})^\top d = 0,\ \nabla g_{i}(\bar{w})^\top d = 0, \ \forall i \in \A_+(\bar{w},\bar{\mu}),\\ 
	& \nabla G_{i}(\bar{w})^\top d = 0, \ \forall i \in \I \cup \{ j \in \I^{\mathrm{C}} \mid \bar \xi_j >0 \} ,\\ 
	& \nabla H_{i}(\bar{w})^\top d = 0, \ \forall i \in \I^\mathrm{C} \cup \{ j \in \I \mid \bar \nu_j >0 \}
	\Big\}. 
\end{align*}
\begin{definition}[MPCC--SSOSC]
	The MPCC--SSOSC (or the piecewise SSOSC) holds at S-stationary point $\bar{w}$ with MPCC--Lagrange multipliers
	$(\bar\lambda,\bar\mu,\bar\nu,\bar\xi)$ if
	\[
	d^\top \nabla_{ww}^2 \mathcal{L}(\bar{w},\bar\lambda,\bar\mu,\bar\xi,\bar\nu)\, d>0
	\quad \forall  d\in \mathcal{C}^\mathrm{S}_{\mathrm{MPCC}}(\bar{w})\setminus\{0\}.
	\]
\end{definition}
The piecewise (S)SOSC implies the standard (S)SOSC for each BNLP.
This also implies that $\bar{w}$ is a strict local solution for each BNLP, and thus for the MPCC~\cite{Hu2002}.
It is sufficient for local optimality of an S-stationary point.
A stronger condition is the so-called RNLP-(S)SOSC, which is the usual (S)SOSC of the RNLP~\eqref{eq:rnlp}.

We recall the MPCC analogue of strict complementarity, referred to as upper-level strict complementarity (ULSC)~\cite{Scheel2000}.
In addition, in this paper, we introduce and use a new condition that is weaker than the ULSC, the partial ULSC (PULSC).
\begin{definition}[ULSC]\label{def:ulsc}
	Let $\bar w$ be an S-stationary point of the MPCC~\eqref{eq:mpcc_intro}, with associated MPCC--Lagrange multipliers $(\bar \lambda, \bar \mu, \bar \xi, \bar \nu)$.
	The upper-level strict complementarity (ULSC) condition holds if, for every $i \in \I_{00}(\bar w)$, the multipliers satisfy the condition:
	$\bar \xi_i > 0$ and $\bar \nu_i > 0$.
\end{definition}
\begin{definition}[PULSC]\label{def:pulsc}
	Let $\bar w$ be an S-stationary point of the MPCC~\eqref{eq:mpcc_intro}, with associated multipliers	$(\bar \lambda, \bar \mu, \bar \xi, \bar \nu)$.
	The partial ULSC (PULSC) condition holds if, for every $i \in \I_{00}(\bar w)$, at least one of the inequalities
	$\bar \xi_i > 0$ or $\bar \nu_i > 0$ holds.
\end{definition}

\section{Sequential quadratic programming with complementarity constraints (SQPCC)}~\label{sec:sqpcc}
In Section~\ref{sec:sqpcc_derivation}, we recall the derivation of the SQPCC method.
Section~\ref{sec:sqpcc_convergence} presents our new local convergence result.
Furthermore, in Section~\ref{sec:leyffer}, we revisit a counterexample from~\cite{Leyffer2007} and show that our result is not affected by it.
In addition, we discuss some interesting properties of the SQPCC method and compare it with the usual SQP method.
Section~\ref{sec:sqpcc_tight} tightens the convergence result to exclude spurious iterates.
Section~\ref{sec:sqpcc_active_set} provides an active-set stabilization result.

\subsection{Method statement}\label{sec:sqpcc_derivation}
The SQP method for standard NLPs is derived from a piecewise linearization of the KKT conditions.
In the absence of appropriate constraint qualifications, the KKT conditions may fail to be necessary for local optimality of the MPCC~\eqref{eq:mpcc_intro}; moreover, the associated Lagrange multipliers are necessarily unbounded, and the resulting linearization may become inconsistent arbitrarily close to a solution; see~\cite{Fletcher2006}.
To avoid these complications, as proposed by~\cite[Section 5]{Scholtes2004}, one can instead consider a piecewise linearization of the B-stationarity conditions, given in Eq.~\eqref{eq:kkt_bnlp} from Def.~\ref{def:b_stat_bnlp}, at the linearization point $(w^k,\lambda^k,\mu^k,\xi^k,\nu^k)$, which yields:
\begin{subequations}\label{eq:kkt_bnlp_pwl}
	\begin{align*}
		\begin{split}
			&\nabla f(w^k) + \nabla h(w^k) \lambda^{k+1} + \nabla g(w^k) \mu^{k+1}  \\
			&- \nabla G(w^k) \nu^{k+1}   -  \nabla H(w^k) \xi^{k+1}  +
			\nabla_{ww}^2 \mathcal{L}(w^k,\lambda^k,\mu^k,\xi^k,\nu^k) \Delta w^k = 0,	\\
		\end{split}\\
		&h(w^k) + \nabla h(w^k)^\top \Delta w^k = 0,\\
		&0 \leq -g(w^k) -\nabla g(w^k)^\top \Delta w^k \perp \mu^{k+1} \geq 0,\\
		& G_i(w^{k}) +  \nabla G(w^k)^\top \Delta w^k = 0, \;  \quad \forall i \in 	\I(w^{k}),\\
		&  H_i(w^k) +  \nabla H(w^k)^\top \Delta w^k = 0,\;  \quad \forall i \in 	\I^{\mathrm{C}}(w^{k}),\\
		& 0 \leq  H_i(w^k) +  \nabla H(w^k)^\top \Delta w^k \perp {\xi}_i^{k+1} \geq 0,\;  \quad \forall i \in 	\I(w^{k}),\\
		& 0 \leq G_i(w^{k}) +  \nabla G(w^k)^\top \Delta w^k \ \perp {\nu}_i^{k+1} \geq 0 \;  \quad \forall i \in 	\I^{\mathrm{C}}(w^{k}).
	\end{align*}
\end{subequations}
Clearly, if the linearization point coincides with a solution of the MPCC, then $\Delta w^{k} = 0$ and the B-stationarity conditions~\eqref{eq:kkt_bnlp} are recovered.
Otherwise, the resulting conditions correspond to the B-stationarity conditions of a quadratic program with complementarity constraints (QPCC):
\begin{mini!}[2]
	{\substack{\Delta w \in \R^{n}}}{\nabla f(w^k)^\top \Delta w + \frac{1}{2} \Delta w^\top \nabla_{ww}^2 \mathcal{L}(w^k,\lambda^k,\mu^k,\xi^k,\nu^k) \Delta w \label{eq:qpcc_obj}}
	{\label{eq:qpcc}}{}
	\addConstraint{h(w^k) + \nabla h(w^k)^\top \Delta w}{=0 \label{eq:qpcc_eq}}
	\addConstraint{g(w^k) +\nabla g(w^k)^\top \Delta w}{\leq0 \label{eq:qpcc_ineq}}
	\addConstraint{0 \leq G(w^k) + \nabla G(w^k)^\top \Delta w \perp H(w^k) + \nabla H(w^k)^\top \Delta w}{\geq 0 \label{eq:qpcc_comp}.}
\end{mini!}
Using QPCC subproblems, the SQP method can be generalized by solving a sequence of quadratic programs with complementarity constraints (SQPCC).  
This SQPCC approach was first proposed in~\cite{Scholtes2004} in a more general setting, namely for star-shaped nonconvex geometric constraints, to which the L-shaped complementarity set belongs.
In contrast to the standard SQP method, the subproblems explicitly linearize the complementarity constraints while preserving their combinatorial structure.
This leads to a more natural quadratic approximation of the MPCC.

Observe that the QPCC uses the Hessian of the MPCC Lagrangian $\mathcal L(\cdot)$ instead of the standard Lagrangian $L(\cdot)$ of~\eqref{eq:mpcc_nlp}.
As in the SQP method, one may use a positive definite Hessian approximation $H^k$ in place of the exact Hessian.

Several local algorithms for solving QPCCs can, under suitable assumptions such as MPCC--LICQ and positive definiteness of the Hessian $H^k$, efficiently compute an S-stationary point of a QPCC~\cite{Hall2024,Luo1996,Pozharskiy2026,Scholtes2004}. 
We will assume this in the convergence proof below.
Note that finding a global optimum of a QPCC is sufficient for finding an S-stationary point of the QPCC, but it is not necessary for the convergence of the SQPCC method.

Note that satisfaction of the linearized complementarity conditions~\eqref{eq:qpcc_comp} at an iterate $w^{k}$ does not, in general, imply satisfaction of the nonlinear complementarity conditions, that is, $G_i(w^k) H_i(w^k) \neq 0$.  
We therefore define the complementarity index sets for the QPCC~\eqref{eq:qpcc} using the superscript ``$\mathrm{qp}$'' and write $(\I_{0+}^{\mathrm{qp}}(w^k), \I_{+0}^{\mathrm{qp}}(w^k), \I_{00}^{\mathrm{qp}}(w^k))$.
In the special case where the complementarity functions $G$ and $H$ are affine in $w$, one has $G_i(w^k) H_i(w^k) = 0$ for all $k>0$.  
That is, the iterates are complementarity-feasible and the superscript ``$\mathrm{qp}$'' can be omitted.

\subsection{Local convergence of the SQPCC method}\label{sec:sqpcc_convergence}

A local convergence analysis of the SQPCC method was outlined in~\cite[Section 5]{Scholtes2004}. 
However, a detailed convergence theory was not the main focus there; rather, the emphasis was on introducing the method and some basic notions.
Thus, the analysis given in~\cite{Scholtes2004} relies on rather restrictive assumptions, namely strict complementarity for the inequality constraints, ULSC for the complementarity constraints, an RNLP--SSOSC condition (which is stronger than the MPCC--SSOSC assumed below), and initialization sufficiently close to the solution so that the active set is already stabilized. 
A stronger version of this result, requiring only MPCC--SSOSC and no strict complementarity assumption, is established in Part~I of the proof below. 
Parts~II and~III further remove the ULSC condition and enlarge the local region of convergence, thereby yielding a more general and stronger convergence result.

Our proof relies on the observation that the SQP method and Theorem~\ref{th:sqp_local_convergence} can be applied to each ${\BNLP}_{\I}$. 
A similar decomposition idea appears in piecewise SQP (PSQP) methods~\cite[Theorem~6.4.3]{Luo1996}.

Just like in the PSQP method, due to the nonconvexity of the QPCC subproblems, one cannot expect a unique convergence sequence.
However, this nonuniqueness is not problematic for the present result, which is formulated as an existence statement,  see also the discussion in Sec.~\ref{sec:leyffer}.
For the sake of generality, we do not yet impose additional assumptions in Theorem~\ref{th:sqpcc_local_convergence}, but later provide conditions under which each sequence converges (cf. Corollary~\ref{cor:no_spurious_sequences}) and under which the converging sequence is locally unique.

Note that PSQP methods still solve QPs and rely on explicit active-set identification strategies (as discussed in Sec.~\ref{sec:intro}), which impose additional restrictions on the initialization.
In contrast, in the SQPCC approach, the active set selection is done implicitly by the QPCC solver, allowing to relax the restrictive active-set identification and strict complementarity assumptions.

The main idea of the proof is to show that, if initialized sufficiently close to a solution, the solutions of the QP subproblems solved by SQP when applied to ${\BNLP}_{\I}$ are S-stationary points of the QPCC~\eqref{eq:qpcc}.  
As a consequence, convergence of the SQPCC algorithm follows directly from the local convergence theory of SQP.  
Interestingly, we show that weaker non-S-stationary points $\tilde w$ in a neighborhood of an S-stationary point $\bar w$ can attract iterates, which subsequently converge to an S-stationary point.  
This phenomenon further enlarges the local convergence region of the SQPCC method (Part~III of the proof below), a property that plain SQP~\cite{Fletcher2006} and PSQP methods of~\cite{Luo1996} do not possess.  
This, and the typical convergence behavior is illustrated in the example below.
\begin{figure}[t]
	\centering
	\includegraphics[width=0.9\textwidth]{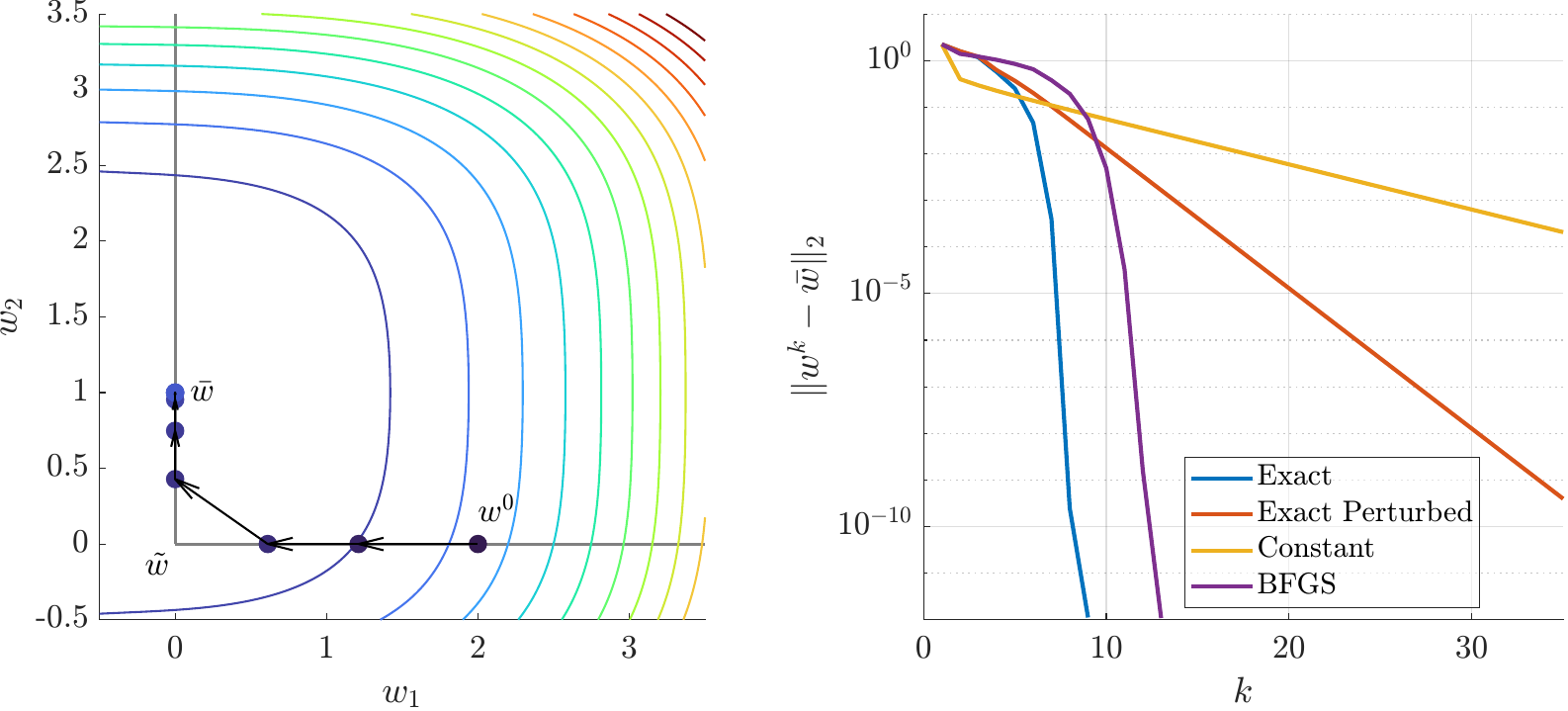}
	\caption{
		The left plot illustrates the iterates of the exact-Hessian SQPCC method in the $(w_1,w_2)$-plane.  
		The right plot illustrates the errors over the iterations for different Hessian approximations, showing the corresponding convergence rates.
	}
	\label{fig:sqpcc_local_convergence}
	\vspace{-0.3cm}
\end{figure}

\begin{example}\label{ex:sqpcc_local_convergence}
	We illustrate the local convergence of the SQPCC method with the following example:
	\begin{align*}
		\min_{w\in \R^2}\; w_1 + w_1^2 + w_1^3 + (w_2-1)^4 + (w_2-1)^2
		\quad \mathrm{s.t.} \quad
		0 \leq w_1 \perp w_2 \geq 0.
	\end{align*}
	The problem has a unique local minimizer $\bar w = (0,1)$ with $f(\bar w) = 0$, which is an S-stationary point.  
	Moreover, the origin $\tilde w = (0,0)$ with $f(\tilde w) = 2$ is not a local minimizer, but a C-stationary point with MPCC--Lagrange multipliers $\tilde \xi = -1$ and $\tilde \nu = -6$.  
	We use the initial point $w^0 = (2,0)$ and show the iterates of the exact-Hessian SQPCC method in the left plot of Fig.~\ref{fig:sqpcc_local_convergence}.
	The first three iterates are of the form $w^k = (w_1^k,0)$ with $w_1^k > 0$, which are S-stationary points of the QPCC since there are no degenerate indices, i.e., $\I_{00}^\mathrm{qp}(w^k) = \emptyset$.  
	However, the origin $\tilde w$ is not an S-stationary point, and the SQPCC iterates turn around the corner and converge toward $\bar w$, with iterates of the form $w^k = (0,w_2^k)$ and $w_2^k > 0$.  
	This illustrates how a C-stationary point, together with its local convergence radius where the QPCC subproblems yield S-stationary points (Part~III of the proof), attract iterates into the local convergence region of the BNLP with $w_1 = 0$, $w_2 \ge 0$ (Parts~I–II of the proof).
	
	The right plot in Fig.~\ref{fig:sqpcc_local_convergence} illustrates different convergence rates.  
	The exact Hessian yields quadratic convergence, as expected; the BFGS approximation gives superlinear convergence.  
	A perturbed exact Hessian $H^k = \nabla_{ww}^2 \mathcal L^k + I$, where $I$ is the identity, yields linear convergence since $\kappa^k > 0$ for all $k$.  
	Similarly, a constant Hessian $H^k = \mathrm{diag}(5,10)$ also gives linear convergence, with a larger $\kappa^k$ and thus a slower local convergence rate $\alpha^ k < 1$.
\end{example}

To apply SQP arguments to each BNLP, analogous to~\eqref{eq:kkt_ge_definition}, we define the problem function mapping for the BNLPs:
\begin{align*}
	\Psi(z) := \Big(\nabla_{w} \mathcal L(w,\lambda,\mu,\xi,\nu),\, h(w),\, -g(w),\, G(w),\, H(w)\Big).
\end{align*}
With a slight overloading of notation, now $z = (w,\lambda,\mu,\xi,\nu)$ collects all primal–dual variables of the MPCC.
The ``kappa'' and ``omega'' conditions are formulated for $\nabla \Psi(z^k)^\top$ and its approximation $\nabla \tilde \Psi(z^{k})^\top$, where $H^k$ replaces the exact Hessian of $\mathcal{L}(\cdot)$ in the QPCC~\eqref{eq:qpcc}.
In the line of Remark~\ref{rem:adjoint_sqp}, other problem data may also be approximated with only minor changes in the argument; for ease of exposition, we focus only on Hessian approximations.

\begin{theorem}[SQPCC local convergence]\label{th:sqpcc_local_convergence}
	Let $f:\R^n\to\R$, $h:\R^n\to\R^{m_h}$, $g:\R^n\to\R^{m_g}$, $H:\R^n \to \R^{m}$, and $G:\R^{n} \to \R^{m}$ be twice differentiable in a neighborhood of a point $\bar{w}\in\R^n$, with second derivatives continuous at $\bar{w}$. 
	Let $\bar{w}$ be a local solution of problem~\eqref{eq:mpcc_intro}, satisfying MPCC--LICQ and MPCC--SSOSC with the associated unique Lagrange multiplier $(\bar\lambda,\bar\mu,\bar \xi, \bar\nu )\in \R^{m_h}\times \R^{m_g} \times \R^{m} \times \R^{m}$.
	Further, suppose there exist constants $\gamma > 0$, $\omega > 0$ and $\bar{\kappa} < \frac{1}{3\gamma}$, and a sequence $\{\kappa^k\}$ with $\kappa^k \in [0,\bar \kappa]$ such that, for all $z^{k}$ and $z$, the following inequalities hold:
	\begin{subequations}\label{eq:kappa_omega_sqpcc}
		\begin{align}
			&\| \nabla \Psi(z)^\top - \nabla \Psi(z^k)^\top \| \leq \omega \| z- z^k \|, \label{eq:omega_cond_sqpcc}\\
			&\| \nabla \Psi(z^k)^\top - \nabla \tilde \Psi(z^k)^\top \| \le \kappa^k. \label{eq:kappa_cond_sqpcc}
		\end{align}
	\end{subequations}
	Then there exist a set $U$ and a constant $\varepsilon > 0$ such that, for any starting point $z^0 = (w^0,\lambda^0,\mu^0,\xi^0, \nu^0)\in U$ with $\B_{\varepsilon}(\bar z) \subseteq U$ and $\bar z = (\bar{w},\bar\lambda,\bar\mu,\bar\nu,\bar \xi)$, there exists a (not necessarily unique) sequence 
	$\{(w^k,\lambda^k,\mu^k,\nu^k,\xi^k)\}$ such that for $k=0,1,\dots$, the point $w^{k+1}$ is an S-stationary point of the QPCC~\eqref{eq:qpcc} with a positive definite Hessian approximation $H^k$, and $(\lambda^{k+1},\mu^{k+1},\nu^{k+1},\xi^{k+1})$ is an associated Lagrange multiplier. 
	Moreover, there exists such a sequence which converges to $(\bar{w},\bar\lambda,\bar\mu,\bar\xi,\bar\nu)$, and the rate of convergence is linear, satisfying the contraction inequality
	\begin{align}\label{eq:sqpcc_contraction_estimate}
		\| z^{k+1} - \bar z \| \le \alpha^k \| z^{k} - \bar z \| + \beta \| z^{k+1} - \bar z \|^2,
	\end{align}
	with constants $\alpha^k \in [0,1)$ for all $k$ and $\beta > 0$.
	If, in addition, $\{ \kappa^k \} \to 0$, then $\alpha^k \to 0$ and the convergence rate is superlinear; if $\bar \kappa = 0$, then $\alpha^k = 0$ and the convergence rate is quadratic.
\end{theorem}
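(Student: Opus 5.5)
The plan is to reduce the SQPCC iteration to the classical SQP iteration on a fixed branch NLP and then invoke Theorem~\ref{th:sqp_local_convergence}. The proof is organised in the three parts already announced in the text.

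\textbf{Part~I (one branch).} Fix $\I\in\mathcal P(\bar w)$. By MPCC--LICQ, the LICQ holds for $\BNLP_\I$ at $\bar w$. Since $\bar w$ is S-stationary, Proposition~\ref{prop:uniqe_mpcc_multipliers} shows that $(\bar\lambda,\bar\mu,\bar\xi,\bar\nu)$ is the unique multiplier of $\BNLP_\I$. The MPCC strong critical cone contains the strong critical cone of $\BNLP_\I$ at this multiplier, so MPCC--SSOSC gives SSOSC for $\BNLP_\I$. The $\kappa$--$\omega$ conditions~\eqref{eq:kappa_omega_sqpcc} for $\Psi$ restrict to the KKT map of $\BNLP_\I$, because $\Psi$ collects exactly those data. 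Hence Theorem~\ref{th:sqp_local_convergence} applies to $\BNLP_\I$. It yields a radius $\varepsilon_\I$, Lipschitz constants $\gamma_\I,\tilde\gamma_\I$, and a unique convex-QP solution $z^{k+1}$ for every $z^k\in\B_{\varepsilon_\I}(\bar z)$, with the contraction~\eqref{eq:sqpcc_contraction_estimate}. I would take $\gamma$ as the maximum of the $\gamma_\I$ over the finitely many branches and set $\varepsilon:=\min_\I\varepsilon_\I$, so that one $\varepsilon$ and one $\bar\kappa<1/(3\gamma)$ serve all branches.

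\textbf{Key step: the branch QP step is an S-stationary point of the QPCC~\eqref{eq:qpcc}.} The branch QP keeps $G_i+\nabla G_i^\top\Delta w=0$ with $H_i+\nabla H_i^\top\Delta w\ge0$ for $i\in\I$, and symmetrically on $\I^{\mathrm C}$. Its solution is therefore feasible for the QPCC.

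1. Compare the KKT system of the branch QP with the S-stationarity system of the QPCC. These agree except on the sign of the multiplier of an equality-fixed component that happens to be degenerate in the QPCC, i.e.\ $i\in\I_{00}^{\mathrm{qp}}$.
2. If $i\in\I$ and the linearized $H_i$ is also zero, the QPCC requires $\xi_i^{k+1}\ge0$ as well as $\nu_i^{k+1}\ge0$; the branch QP only supplies $\nu_i^{k+1}\ge0$.
3. Such a degenerate index can occur only for $i\in\I_{00}(\bar w)$, by active-set stabilization (Corollary~\ref{lem:active_set_stabilization} applied as in Proposition~\ref{th:active_set_stabilization}).
4. Near $\bar z$ the multiplier $\xi_i^{k+1}$ is close to $\bar\xi_i\ge0$. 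If $\bar\xi_i>0$, then $\xi_i^{k+1}>0$ for $k$ large. If $\bar\xi_i=0$, its sign is uncontrolled.

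\textbf{Part~II (choice of branch, no ULSC).} To handle the uncontrolled sign, I would not fix $\I$ once and for all but choose it adaptively.

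1. At each iterate, take the branch QP solution.
2. If some degenerate $i\in\I$ has $\xi_i^{k+1}<0$, move $i$ to $\I^{\mathrm C}$. The point is a stationary point of the other branch QP too, so it can be checked there.
3. Uniqueness of multipliers under LICQ lets one argue that in some branch all degenerate signs are correct: this is the QP analogue of S-stationarity being equivalent to B-stationarity under LICQ. Concretely, a QPCC S-stationary point exists because the QPCC restricted to $\B_\varepsilon$ has a global minimizer, which is B-stationary, and MPCC--LICQ passes to the linearized constraints near $\bar w$.
4. That minimizer is the solution of some branch QP $\I\in\mathcal P(\bar w)$, and hence satisfies the Part~I contraction.
5. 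Switching branches between iterations is harmless, because the contraction estimate holds uniformly in $\I$ with the common constants.
6. Induction then gives an existing sequence with~\eqref{eq:sqpcc_contraction_estimate} and the stated rates, exactly as at the end of the proof of Theorem~\ref{th:sqp_local_convergence}.

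\textbf{Part~III (the set $U$).} Here I would enlarge $U\supseteq\B_\varepsilon(\bar z)$ by adding neighbourhoods of nearby branch solutions of $\BNLP$'s from which the QPCC steps are S-stationary and enter $\B_\varepsilon(\bar z)$ after finitely many steps, as in Example~\ref{ex:sqpcc_local_convergence}.

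\textbf{Main obstacle.} The main obstacle is step 3 of Part~II: guaranteeing, without ULSC, that a QPCC S-stationary point exists inside the SQP convergence ball of some branch. My first attempt would use the local QPCC minimizer plus uniqueness of branch QP solutions from Theorem~\ref{th:fiacco_nlp_sensitivity}(b).
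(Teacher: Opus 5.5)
Your Part~I matches the paper's Part~I: apply Theorem~\ref{th:sqp_local_convergence} to every $\BNLP_\I$, take $\gamma=\max_\I\gamma^\I$ and $\varepsilon=\min_\I\varepsilon^\I$, and use that the one-step contraction holds for any branch, so switching branches is harmless. The genuine difference is in Part~II.

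The paper argues case by case on the possible sign violations:
\begin{itemize}
\item nondegenerate pairs of $\bar w$ whose inequality is still active in the branch QP (the sets $\J_{0+},\J_{+0}$), where it shrinks the radius to $\rho_{0+},\rho_{+0}$;
\item biactive pairs with exactly one positive multiplier, where it shrinks the radius to $\rho_{00}$;
\item pairs with $\bar\xi_j=\bar\nu_j=0$, where it compares the two branch QPs $\I_a,\I_b$ differing only in $j$ and uses Proposition~\ref{prop:uniqe_mpcc_multipliers} to show that not both can violate S-stationarity.
\end{itemize}
You instead shrink the radius once, so that only indices in $\I_{00}(\bar w)$ can be biactive in a branch-QP solution. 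You then select the branch by minimization: a local QPCC minimizer is B-stationary, hence S-stationary under MPCC--LICQ of the linearized data.

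What each route buys: yours treats all biactive indices simultaneously, whereas the paper's $\I_a/\I_b$ comparison handles one index with the others fixed, and it needs no multiplier bookkeeping. The paper's route, in exchange, yields the multiplier-dependent radii and the explicit description of $\I^{\mathrm{qp}}_{0+},\I^{\mathrm{qp}}_{+0},\I^{\mathrm{qp}}_{00}$. These are reused in Corollary~\ref{cor:no_spurious_sequences} and Proposition~\ref{th:active_set_stabilization_sqpcc}. The statement is already satisfied with $U=\B_\varepsilon(\bar z)$, so your sketchy Part~III, like the paper's, is an optional enlargement.

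Three points need tightening.

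\textbf{Step~2 is false as stated.} The point is \emph{not} stationary for the flipped branch QP. There $\xi_i^{k+1}$ multiplies the inequality $G_i(w^k)+\nabla G_i(w^k)^\top\Delta w\ge 0$ and must be nonnegative, so the flipped QP has a different solution. Step~3 makes step~2 unnecessary, so drop it.

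\textbf{Step~3 needs the ball constraint to be inactive.} A minimizer over a ball is B-stationary for the QPCC only if the ball constraint is inactive, and you must argue this.
\begin{itemize}
\item Take the ball around $\bar w$ so small that the linearizations of $H_i$, $i\in\I_{0+}(\bar w)$, of $G_i$, $i\in\I_{+0}(\bar w)$, and of inactive $g_i$ keep their signs inside it.
\item Then the QPCC feasible set there equals the union of the branch-QP feasible sets over $\I\in\mathcal P(\bar w)$, and MPCC--LICQ carries over by continuity.
\item Take $z^k$ so close that, by \eqref{eq:qp_lipschitz}, every branch solution lies strictly inside the ball. The minimizer is then simply the branch solution of least objective value.
\end{itemize}
This can even be phrased without B-stationarity. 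Suppose that solution had a biactive $j\in\I\cap\I_{00}(\bar w)$ with $\xi_j^{k+1}<0$. By uniqueness of multipliers it is not a KKT point of the convex branch QP for $\I\setminus\{j\}\in\mathcal P(\bar w)$. That QP's optimal value is therefore strictly smaller, a contradiction.

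\textbf{Multipliers must match.} Under LICQ the QPCC multipliers coincide with the branch-QP multipliers. So the primal--dual point $z^{k+1}$ you select is exactly the one covered by the contraction estimate.
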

\textit{Proof.}
The proof is divided into three parts. 
In Part~I, we apply the local SQP result from Theorem~\ref{th:sqp_local_convergence} to the branch NLPs and establish local convergence under ULSC (see Def.~\ref{def:ulsc}) for initializations at which the complementarity active sets are already identified. 
In Part~II, we show that the corresponding branch-QP solutions are, after refining the local neighborhood if necessary, S-stationary points of the QPCC also without ULSC and without prior complementarity active-set identification. 
In Part~III, we show that nonoptimal stationary points close to the solution can attract iterates even from outside the local convergence region identified in Part~II, thereby enlarging the local region from which a convergent SQPCC sequence exists.

\textbf{Part I.} 
MPCC--LICQ and MPCC--SSOSC imply the usual LICQ and SSOSC at $\bar w$ for each ${\BNLP}_{\I}$ with $\I \in \mathcal{P}(\bar w)$.  
Recall that $\bar z$ is a solution of each $\BNLP_\I$.
Furthermore, \eqref{eq:kappa_omega_sqpcc} implies that the analogous condition~\eqref{eq:kappa_omega_sqp} holds for each ${\BNLP}_{\I}$. 
Define $\gamma = \max_{\I \in \mathcal{P}(\bar w)} \gamma^\I$, where $\gamma^\I$ is the Lipschitz constant from Theorem~\ref{th:fiacco_nlp_sensitivity} applied to ${\BNLP}_{\I}$ (compare to~\eqref{eq:perturbed_nlp}).  
Therefore, Theorem~\ref{th:sqp_local_convergence} can be applied to each ${\BNLP}_{\I}$, yielding a constant $\varepsilon^\I > 0$ such that, for any starting point $z^0 = (w^0,\lambda^0,\mu^0,\nu^0,\xi^0) \in \B_{\varepsilon^{\I}}(\bar z)$, there exists a sequence $\{(w^{\I,k},\lambda^{\I,k},\mu^{\I,k},\nu^{\I,k},\xi^{\I,k})\}$ such that for $k=0,1,\dots$, the point $w^{\I,k+1}$ is a stationary point of a QP associated with ${\BNLP}_{\I}$:
\begin{mini!}[2]
	{\substack{\Delta w \in \R^{n}}}{\nabla f(w^k)^\top  \Delta w+ \frac{1}{2} \Delta w^\top \nabla_{ww}^2 \mathcal{L}(w^k,\lambda^k,\mu^k,\xi^k,\nu^k) \Delta w \label{eq:branch_qp_obj}}
	{\label{eq:branch_qp}}{}
	\addConstraint{h(w^k) + \nabla h(w^k)^\top \Delta w}{=0 \label{eq:branch_qp_eq}}
	\addConstraint{g(w^k) +\nabla g(w^k)^\top \Delta w}{\leq0 \label{eq:branch_qp_ineq}}
	\addConstraint{G_i(w^k) + \nabla G_i(w^k)^\top \Delta w = 0,\ H_i(w^k) + \nabla H_i(w^k)^\top \Delta w}{\geq 0,\ \forall i \in \I(\bar{w})\label{eq:branch_qp_comp1}}
	\addConstraint{G_i(w^k) + \nabla G_i(w^k)^\top \Delta w \geq 0,\ H_i(w^k) + \nabla H_i(w^k)^\top \Delta w}{= 0,\ \forall i \in \I^\mathrm{C}(\bar{w}),\label{eq:branch_qp_comp2}}
\end{mini!}
and it converges with the rate~\eqref{eq:sqpcc_contraction_estimate}.

From now on, we omit the subscript $\I$ in the QP solutions, as it will be clear from the context.
Moreover, under the conditions discussed below, the QP solutions $z^{\I,k}$ coincide with the QPCC iterates $z^k$.
If we choose $z^0 \in \mathcal B_{\bar\varepsilon}(\bar z)$ with $\bar\varepsilon := \min_{\I \in \mathcal P(\bar w)} \varepsilon^{\I}$, then the sequence of iterates generated by repeatedly solving the QP~\eqref{eq:branch_qp} with any $\I \in \mathcal P(\bar w)$ (not necessarily the same $\I$ for all $k$) converges to $\bar z$.

Next, we show that solutions of the QP~\eqref{eq:branch_qp} at linearization points $z^{k}$ sufficiently close to $\bar z$ are also S-stationary points of the QPCC~\eqref{eq:qpcc}.
Recall that, for nondegenerate indices, the corresponding inequality constraints are inactive at the solution, that is,
\[
G_i(\bar w) = 0,\quad H_i(\bar w) > 0,\quad \forall i \in \I_{0+}(\bar w) \subseteq \I(\bar w).
\]
By applying Proposition~\ref{th:active_set_stabilization} to ${\BNLP}_{\I}$ and the QP~\eqref{eq:branch_qp}, it follows that, for each constraint pair in~\eqref{eq:branch_qp_comp1}, there exists a constant $\rho_i^H > 0$, $i \in \I_{0+}(\bar w)$, such that, with
$\tilde\rho_{0+} := \min_{i \in \mathcal I_{0+}(\bar w)} \rho_i^H$,
for every $z^k \in \B_{\tilde\rho_{0+}}(\bar z)$ it holds that
\[
G_i(w^k) + \nabla G_i(w^k)^\top \Delta w = 0,\quad
H_i(w^k) + \nabla H_i(w^k)^\top \Delta w > 0,
\quad \forall i \in \I_{0+}(\bar w).
\]
The same argument applies to indices $i \in \I_{+0}(\bar w)$, yielding a constant $\tilde\rho_{+0} := \min_{i \in \I_{+0}(\bar w)} \rho_i^G > 0$ such that, for every $z^{k} \in \B_{\tilde\rho_{+0}}(\bar z)$, it holds that 
\(
G_i(w^k) + \nabla G_i(w^k)^\top \Delta w > 0,\ H_i(w^k) + \nabla H_i(w^k)^\top \Delta w = 0\,  \forall i \in \I_{+0}(\bar w).
\)

Next, suppose that $\bar\xi_i > 0$ and $\bar\nu_i > 0$ for all $i \in \I_{00}(\bar w)$ (this ULSC assumption will be relaxed below).
Then, for each $i \in \I_{00}(\bar w)$, the constraints $H_i(w) \geq 0$ for $i \in \I(\bar w) \setminus \I_{0+}(\bar w)$
and $G_i(w) \geq 0$ for $i \in \I^{\mathrm{C}}(\bar w) \setminus \I_{+0}(\bar w)$ in ${\BNLP}_{\I}$ are strictly active at $\bar w$.
By applying Proposition~\ref{th:active_set_stabilization} to the QP~\eqref{eq:branch_qp} again, there exists a constant $\tilde\rho_{00} > 0$ such that, for all $z^{k} \in \B_{\tilde\rho_{00}}(\bar z)$, it holds that 
\[
G_i(w^k) + \nabla G_i(w^k)^\top \Delta w = 0,\quad H_i(w^k) + \nabla H_i(w^k)^\top \Delta w = 0, \quad \forall i \in \I_{00}(\bar w).
\]
Furthermore, the corresponding multipliers satisfy $\xi_i^{k+1} =  \xi_i(r^k) > 0$ and $\nu_i^{k+1} =  \nu_i(r^k) > 0$.
Therefore, with $\tilde{\rho} = \min(\tilde{\rho}_{00}, \tilde{\rho}_{0+}, \tilde{\rho}_{+0})$ and for all $z^{k} \in \B_{\tilde{\rho}}(\bar z )$ the QP~\eqref{eq:branch_qp} has the same active set at $w^{k+1}$ as $\mathrm{BNLP}_\I$ at $\bar w$.
Moreover, the multipliers associated with the degenerate indices $i \in \I_{00}(\bar w)$ satisfy the conditions of S-stationarity.
By applying Proposition~\ref{prop:uniqe_mpcc_multipliers} to the QP solutions of~\eqref{eq:branch_qp}, we conclude that these points are also solutions of the QPCC~\eqref{eq:mpcc_intro}, for any $\I \in \mathcal P(\bar w)$.

Up to this point, the result has been established for initial points $z^0 \in \B_{\bar \varepsilon}(\bar z) \cap \B_{\tilde\rho}(\bar z)$ under the additional ULSC assumption, for which the complementarity active sets of the QPCC, denoted by
$(\I_{0+}^{\mathrm{qp}}(w^k), \I_{+0}^{\mathrm{qp}}(w^k), \I_{00}^{\mathrm{qp}}(w^k))$, coincide with those of the MPCC,
$(\I_{0+}(\bar w), \I_{+0}(\bar w), \I_{00}(\bar w))$.
In this case, convergence of the SQPCC method follows directly from the convergence of SQP applied to any ${\BNLP}_{\I}$.

\textbf{Part II.} It remains to consider the case where ULSC does not hold and the complementarity active sets of the QPCC do not coincide with those of the MPCC at $\bar w$. 
This situation allows for the choice of a larger constant $\tilde\rho > 0$.
To this end, we discuss next each possible case in which a solution of~\eqref{eq:branch_qp} does not have the same active set as ${\BNLP}_{\I}$ at $\bar w$.

First, we regard, a specific nondegenerate index $j \in \I_{0+}(\bar w)$. 
The SQP method applied to~$\mathrm{BNLP}_\I$ may converge with $z^0 \in \B_{\varepsilon}(\bar z)$ and $\varepsilon > \rho^H_j > \tilde{\rho}_{0+} > 0$.  
In other words, the active set of the nondegenerate pair must not immediately be identified, and we have in~\eqref{eq:branch_qp_comp1}:
\[
G_j(w^k) + \nabla G_j(w^k)^\top \Delta w = 0,\ H_j(w^k) + \nabla H_j(w^k)^\top \Delta w = 0,\  j  \in \I_{0+}(\bar w) \subseteq \I(\bar w).
\]
For the active inequality constraint in~\eqref{eq:branch_qp_comp1}, the associated multiplier already satisfies $\nu_j^{k+1} \geq 0$.
If the multiplier corresponding to the equality constraint satisfies $\xi_j^{k+1} \geq 0$, then the QP solution considered so far still fulfills the conditions of S-stationarity and, by Proposition~\ref{prop:uniqe_mpcc_multipliers}, is also an S-stationary point of the QPCC~\eqref{eq:qpcc}.
We collect all such indices $j \in \I_{0+}(\bar w)$ that do not lead to a violation of S-stationarity in the set $\J_{0+} \subseteq \I_{0+}(\bar w)$.
On the other hand, if $\xi_j^{k+1} < 0$, then the initialization radius must be reduced to exclude these constraints from being active.
Specifically, we choose $z^0 \in \B_{\rho_{0+}}(\bar z)$, where ${\rho}_{0+} = \min_{i \in\I_{0+}(\bar{w}) \setminus \J_{0+}} \rho^H_i$.

By repeating the same argument for the constraints~\eqref{eq:branch_qp_comp2} and indices $i \in \I_{+0}(\bar w) \subseteq \I^{\mathrm C}(\bar w)$, we can analogously refine $\tilde\rho_{+0}$ to 
$
\rho_{+0} := \min_{i \in \I_{+0}(\bar w) \setminus \J_{+0}} \rho_i^G,
$
where $\J_{+0} \subseteq \I_{+0}(\bar w)$ is defined in analogy to $\J_{0+}$.

Similarly, for degenerate indices $i \in \I_{00}(\bar w)$ satisfying $\bar\xi_i > 0$ and $\bar\nu_i > 0$, it may occur, for $\varepsilon > \tilde\rho_{00}$, that for some $\I$ and a specific $j \in \I_{00}(\bar w) \cap \I(\bar w)$ in~\eqref{eq:branch_qp_comp1} we have
\[
G_j(w^k) + \nabla G_j(w^k)^\top \Delta w = 0,\quad
H_j(w^k) + \nabla H_j(w^k)^\top \Delta w > 0 .
\]
In this case, the corresponding complementarity pair is nondegenerate, and all such indices are collected in the set $\J_{00}^{0+} \subseteq \I_{00}(\bar w)$.
Repeating the same argument for the constraints~\eqref{eq:branch_qp_comp2}, we collect the corresponding indices in $\J_{00}^{+0} \subseteq \I_{00}(\bar w)$.
Thus, all indices $j \in \J_{00} := \J_{00}^{0+} \cup \J_{00}^{+0}$ are nondegenerate in the QP solution and do not violate S-stationarity.
Accordingly, we refine $\tilde\rho_{00}$ to 
\(
\rho_{00} := \min_{i \in \I_{00}(\bar w) \setminus \J_{00}} \rho_i .
\)

Altogether, we have established S-stationarity of the QPCC at iterates $w^{k+1} \in \B_{\rho}(\bar w)$,
with $\rho := \min(\rho_{0+}, \rho_{+0}, \rho_{00})$, and with complementarity active sets given by
$\I_{0+}^\mathrm{qp}(w^{k+1}) = \I_{0+}(\bar w) \cup \J_{00}^{0+} \setminus \J_{0+}$,
$\I_{+0}^\mathrm{qp}(w^{k+1}) = \I_{+0}(\bar w) \cup \J_{00}^{+0} \setminus \J_{+0}$ and
$\I_{00}^\mathrm{qp}(w^{k+1}) = \I_{00}(\bar w) \cup \J_{0+}\cup \J_{+0} \setminus \J_{00}$.

It remains to relax the intermediate ULSC assumption.
Consider an index $j \in \I_{00}(\bar w)$ with $\bar\xi_j > 0$ and $\bar\nu_j = 0$, and two BNLPs that differ only in the index $j$, denoted by ${\BNLP}_{\I_a}$ and ${\BNLP}_{\I_b}$.
For $j \in \I_a$, the corresponding linearized constraint in the QP is
\begin{align}\label{eq:branch_qp_case_a}
	G_j(w^k) + \nabla G_j(w^k)^\top \Delta w = 0,\quad	H_j(w^k) + \nabla H_j(w^k)^\top \Delta w \geq 0.
\end{align}
For $j \in \I_b$, it is given by
\begin{align}\label{eq:branch_qp_case_b}
	G_j(w^k) + \nabla G_j(w^k)^\top \Delta w \geq 0,\quad 	H_j(w^k) + \nabla H_j(w^k)^\top \Delta w = 0.
\end{align}
It is sufficient to consider ${\BNLP}_{\I_a}$ and the corresponding QP with the linearized constraint~\eqref{eq:branch_qp_case_a}.
Since $\bar\nu_j = 0$, the inequality constraint is weakly active at the solution $\bar w$ of ${\BNLP}_{\I_a}$.
By Proposition~\ref{th:active_set_stabilization}, this constraint may remain inactive for all finite $k$ and become active only in the limit.
In this case, the index $j$ is nondegenerate in the QPCC, and S-stationarity of the QP solution is preserved.
On the other hand, if the inequality constraint in~\eqref{eq:branch_qp_case_a} is active, then the associated multiplier satisfies $\nu_j^{k+1} \geq 0$.
Moreover, there exists a constant $\rho_j^G > 0$ such that $\xi_j^{k+1} = \xi_j(r^k) \geq 0$ for all $z^{k} \in \B_{\rho_j^G}(\bar z)$.
In this case, the QP solution corresponds to an S-stationary point of the QPCC provided that we update
$\rho_{00} := \min(\rho_{00}^+, \rho_j^G)$, where
$
\rho_{00}^+ := \min_{i \in I_{00}^+(\bar w) \setminus \J_{00}} \rho_i,
$
and
$
\I_{00}^+(\bar w) := \{ i \in \I_{00}(\bar w) \mid \bar\xi_i > 0,\ \bar\nu_i > 0 \}.
$
An analogous argument applies to indices $j \in \I_{00}(\bar w)$ with $\bar\xi_j = 0$ and $\bar\nu_j > 0$, allowing $\rho_{00}$ to be reduced further if necessary.

To complete this part of the proof, we consider complementarity pairs with $j \in \I_{00}(\bar w)$ such that $\bar\xi_j = 0$ and $\bar\nu_j = 0$.
In this case, for both BNLPs indexed by $\I_a$ and $\I_b$, the inequality constraints in \eqref{eq:branch_qp_case_a} and \eqref{eq:branch_qp_case_b} are weakly active at the solution $\bar w$. 
By Proposition~\ref{th:active_set_stabilization}, these constraints may remain inactive for all $k$.
As in the previous cases, the resulting QP solution is an S-stationary point of the QPCC, where the index $j$ is now nondegenerate.
Consider now the case~\eqref{eq:branch_qp_case_a} in which the inequality constraint is active.
Then $\nu_j^{k+1} \geq 0$, and if, in addition, the multiplier associated with the equality constraint satisfies $\xi_j^{k+1} \geq 0$, the QP solution again yields an S-stationary point of the QPCC.
On the other hand, if $\xi_j^{k+1} < 0$, the corresponding complementarity pair is degenerate and does not satisfy the definition of S-stationarity.
Denote the solution of this QP by $z^{\I_a,k+1}$.

Next, we check whether $z^{\I_a,k+1}$ satisfies the KKT conditions of the QP obtained from $\BNLP_{\I_b}$ at $z^{k}$, which contains the constraint pair~\eqref{eq:branch_qp_case_b}.
In this case, the first constraint is an inequality, and it is impossible that the associated multiplier satisfies $\nu_j^{k+1} < 0$.
Hence, this QP yields a solution in which the index $j$ is nondegenerate, and the corresponding point is S-stationary.

Finally, we show that it is impossible for both QPs obtained from $\BNLP_{\I_a}$ and $\BNLP_{\I_b}$ at $z^{k}$ to admit $j$ as a degenerate index with multiplier signs violating the definition of S-stationarity.
Indeed, by Proposition~\ref{prop:uniqe_mpcc_multipliers}, for identical active sets the two QPs must have identical Lagrangians and therefore identical MPCC--Lagrange multipliers.
This contradicts the possibility that $\nu_j^{k+1} < 0$ in one formulation and not in the other.

To summarize, by refining $\tilde\rho$ to $\rho := \min(\rho_{00}, \rho_{0+}, \rho_{+0})$ and choosing the initial point $z^0 \in \mathcal B_{\varepsilon}(\bar z)$ with $\varepsilon := \min(\rho, \bar\varepsilon)$, we have shown that there exists an index set $\I \in \mathcal P(\bar w)$ such that the solution of the QP~\eqref{eq:branch_qp} yields an S-stationary point of the QPCC.
Moreover, any sequence of solutions generated by repeatedly solving such QPs converges to $\bar z$.
Consequently, there exists a sequence of S-stationary points of the QPCC that converges to $\bar z$ with the same local convergence rate as SQP applied to the corresponding $\BNLP_{\I}$.

\textbf{Part III.} 
To conclude the proof, we consider one further case in which the region of local convergence can be enlarged.
By MPCC--SSOSC and MPCC--LICQ, the solution $\bar w$ is an isolated local solution of the MPCC~\cite[Theorem 1]{Hu2002}.
That is, there exists a constant $\sigma > 0$ such that $\bar w$ is the unique local minimizer in $\B_{\sigma}(\bar w)$.

Now consider a point $\tilde w$ that is MPCC--stationary but not B-stationary (and hence not S-stationary), that is, there exists at least one index $i \in \I_{00}(\tilde w)$ such that $\tilde\xi_i < 0$ or $\tilde\nu_i < 0$.
Together with its associated multipliers, we denote the corresponding primal--dual point by $\tilde z$.

If $\tilde w \notin \B_{\sigma}(\bar w)$, we set $U := \B_{\varepsilon}(\bar z)$ and the proof is complete (cf. Fig.~\ref{fig:partIII_regions}(a) for an illustration).
Otherwise, if $\tilde w \in \B_{\sigma}(\bar w)$, we may apply the SQP method to any $\BNLP_{\tilde\I}$ (provided that the SSOSC holds for $\BNLP_{\tilde\I}$ at $\tilde{w}$ and its associated MPCC--Lagrange multipliers; if it does not, the proof is complete and we set $U := \B_{\varepsilon}(\bar w)$) with $\tilde\I \in \tilde{\mathcal P}(\tilde w)$, each of which admits a local convergence radius $\tilde\varepsilon_{\tilde\I}$.
Let $\tilde\varepsilon := \min_{\tilde\I \in \tilde{\mathcal P}(\tilde w)} \tilde\varepsilon_{\tilde\I}$ and suppose that
$z^0 \in \B_{\tilde\varepsilon}(\tilde z)$.
Denote by $r := \|\tilde w - \bar w\|$ the distance between the non-B-stationary point $\tilde w$ and the B-stationary point $\bar w$.

If $r > \varepsilon$, where $\varepsilon$ is the convergence radius obtained in the first two parts of the proof, we again set $U := \B_{\varepsilon}(\bar w)$ and the proof is complete (again illustrated by Fig.~\ref{fig:partIII_regions}(a))
If, on the other hand, $r < \varepsilon$, we examine conditions under which the QP subproblems of $\BNLP_{\tilde\I}$, $\tilde\I \in \tilde{\mathcal P}(\tilde w)$, yield S-stationary points of the QPCC (cf. Fig.~\ref{fig:partIII_regions}(b) and (c)).
Clearly, $\Delta w = 0$ cannot be an S-stationary point of the QPCC at $\tilde w$, since $\tilde w$ itself is not S-stationary.
Hence, it is not possible that $\I^{\mathrm{qp}}_{00}(w^{k+1}) = \I_{00}(\tilde w)$.

Nevertheless, QP solutions may exist that do not contain degenerate indices $j \in \I_{00}(\tilde w)$ with multipliers violating S-stationarity.
By an argument analogous to that used in the first part of the proof and by applying Proposition~\ref{th:active_set_stabilization} to $\BNLP_{\tilde\I}$, there exists a constant $\rho_1 > 0$ such that, for all $z^0 \in \B_{\tilde\varepsilon}(\tilde w) \setminus \B_{\rho_1}(\tilde w)$, the QP solution satisfies
\[
\I_{00}^{\mathrm{qp}}(w^{k+1}) \cap
\{ i \in \I_{00}(\tilde w) \mid \tilde\xi_i < 0 \textnormal{ or } \tilde\nu_i < 0 \} = \emptyset.
\]
That is, the QP solution does not contain degenerate indices whose multipliers invalidate S-stationarity
(red and black dots in Fig.~\ref{fig:partIII_regions}(b) and (c); case of spurious sequence (red dots) is further discussed in Sec.~\ref{sec:leyffer}).

Similarly, indices that are nondegenerate at $\tilde w$ but become degenerate in the QP solution and violate S-stationarity can be excluded by choosing $z^0 \in \B_{\tilde\rho_2}(\tilde z)$ for some $\tilde\rho_2 > 0$, so that the nondegenerate indices of $\tilde w$ stabilize in the QP solution.
Consequently, if we initialize $z^0 \in \tilde U := \B_{\min(\tilde\varepsilon,\rho)}(\tilde z) \setminus \B_{\rho_1}(\tilde z)$, the QP solutions associated with $\BNLP_{\tilde\I}$ are S-stationary points of the QPCC~\eqref{eq:qpcc}.
Moreover, if $r < \varepsilon$, these iterates converge, with the same local convergence rate as SQP, in a finite number of iterations $K$ such that $z^K \in \B_{\varepsilon}(\bar z)$ (denoted by $w^K$ in Fig.~\ref{fig:partIII_regions}).
Combining this argument with the first part of the proof, we obtain convergence of the SQPCC method for any initial point $z^0 \in U := \B_{\varepsilon}(\bar z) \cup \tilde U$.
\qed

Since the convergence rates of SQPCC are inherited from those of the SQP method applied to BNLPs, all conclusions drawn after the proof of Theorem~\ref{th:sqp_local_convergence} for the $\kappa$--$\omega$ framework carry over directly to the SQPCC method.
In particular, the less nonlinear the problem functions are, that is, the smaller $\omega$ is, and the smaller the derivative approximation errors in the QPCC~\eqref{eq:qpcc} are, that is, the smaller $\kappa^k$ is, the larger the local convergence region becomes.
Moreover, smaller values of $\kappa^k$ yield faster local convergence.

\begin{remark}
	Since the proof relies on the local convergence theory of the SQP method, in line with Remark~\ref{rem:sqp_ass}, the assumptions of MPCC--LICQ can be relaxed to MPCC--SMFCQ.  
	Already in the SQP case, further relaxation of the constraint qualifications requires modifications of the QP subproblems~\cite{Wright1998}.
	Therefore, in analogy with SQP convergence results~\cite{Bonnans1994}, it can be expected that our standing assumptions are the weakest assumptions under which a local SQPCC convergence result can be obtained without further regularization of the QPCC subproblems.
\end{remark}

\begin{figure}[t]
	\centering
	\includegraphics[width=\textwidth]{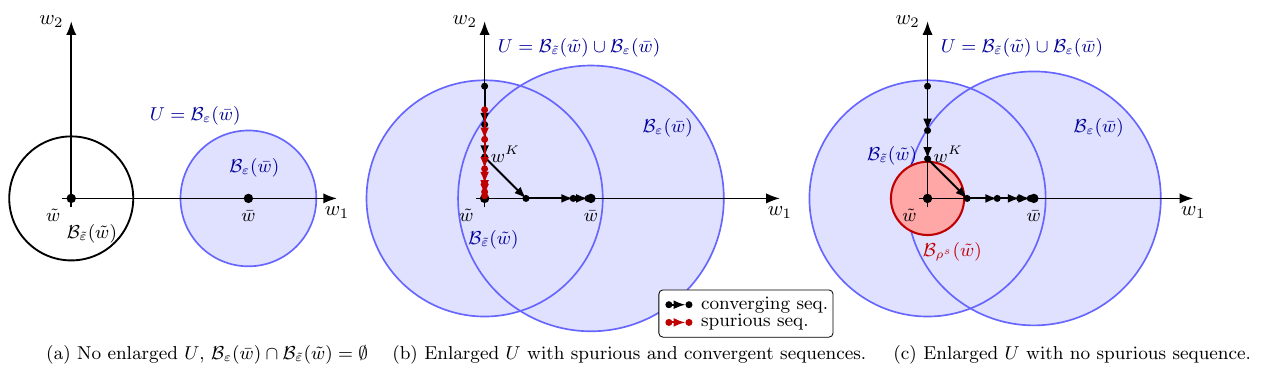}
	\caption{Schematic illustration of the argument in Part~III of the proof of Theorem~\ref{th:sqpcc_local_convergence} in the $(w_1,w_2)$-plane. 
	The reference S-stationary point is denoted by $\bar w$ and the nearby non-S-stationary point by $\tilde w$. 
	In (a), the neighborhoods $\mathcal{B}_{\varepsilon}(\bar w)$ and $\mathcal{B}_{\tilde\varepsilon}(\tilde w)$ do not intersect, so there is no enlargement of $U$. 
	Panels (b) and (c) illustrate the enlarged set $U$ when the two neighborhoods overlap. 
	In (b), red dots indicate spurious sequences attracted to $\tilde w$, while black dots indicate sequences that turn toward $\bar w$. 
	Panel (c) shows the same situation with an inner ball $\mathcal{B}_{\rho^s}(\tilde w)$; cf. Corollary~\ref{cor:no_spurious_sequences}.}

	\label{fig:partIII_regions}
	\vspace{-0.4cm}
\end{figure}

Next we discuss cases where nonconverging sequences of QPCC S-stationary points may exist, and how they can be excluded, and thereby strengthening further the results of Theorem~\ref{th:sqpcc_local_convergence}.

\subsection{On the counterexample of Leyffer and Munson}\label{sec:leyffer}
In~\cite[Section~2.3]{Leyffer2007}, the authors provide an example in which SQPCC can converge to a non-S-stationary solution. 
Here, we revisit this example, show that Theorem~\ref{th:sqpcc_local_convergence} is not contradicted by it, and draw several conclusions from it.

Consider the MPCC~\cite[Eq.~(2.7)]{Leyffer2007}:
\begin{align}\label{eq:leyffer2007}
	\min_{w\in \R^2}\; (w_1-1)^2 + w_2^2 + w_2^3 
	\quad \mathrm{s.t.} \quad
	0 \le w_1 \perp w_2 \ge 0.
\end{align}
This problem satisfies MPCC--LICQ at all feasible points. 
It has a unique minimizer $\bar w = (1,0)$, which is an S-stationary point and satisfies the MPCC--SSOSC. 
Moreover, it also has an M-stationary point $\tilde w = (0,0)$ with MPCC--Lagrange multipliers $\tilde \xi = -2$ and $\tilde \nu = 0$.

Leyffer and Munson observe that there exists a sequence of SQPCC iterates that converges to $\tilde w$ instead of $\bar w$.
 If one initializes at, for example, $w^0 = (t,0)$ with sufficiently small $t>0$, then the SQPCC iterates are
\begin{align*}
w^{k+1} = \left(0, \frac{3 (w_2^k)^2}{6 w_2^k+2}\right),
\end{align*}
and these are S-stationary points of the corresponding QPCCs.
This sequence converges quadratically to the M-stationary point $\tilde w$. 
Locally, these iterates reduce to SQP iterations on the BNLP with $w_1 = 0$ and $w_2 \geq 0$. 
Since $\tilde \nu = 0$, the weakly active constraint $\tilde w_2 = 0$ is identified only in the limit; cf.~Propositions~\ref{th:active_set_stabilization} and \ref{th:active_set_stabilization_sqpcc}.

Applying the exact-Hessian SQP method~\cite{Fletcher2006} directly to the NLP reformulation (cf.~Eq.~\eqref{eq:mpcc_nlp}) of~\eqref{eq:leyffer2007}, using the active-set solver \texttt{qpOASES}~\cite{Ferreau2014} via \texttt{CasADi}~\cite{Andersson2019}, yields the same unfavorable behavior. 
The iterates are illustrated in Fig.~\ref{fig:leyffer2007_convergence} (left). 
Note that in this case the SQP iterates can only follow the BNLP corresponding to the feasible initialization point.

However, this does not contradict Theorem~\ref{th:sqpcc_local_convergence}. 
Under assumptions satisfied by this example, that theorem asserts the existence of at least one sequence of QPCC S-stationary points converging to an S-stationary point of the MPCC. 
The sequence above is not such a sequence. 
We show next that such a sequence still exists and that the conclusion of Theorem~\ref{th:sqpcc_local_convergence} remains valid.

For illustration, we solve the MPCC~\eqref{eq:leyffer2007} with the exact Hessian SQPCC method. 
The QPCC subproblems are solved by a homotopy loop with a Scholtes relaxation~\cite{Scholtes2001}. We use the implementation in \texttt{nosnoc}'s \texttt{mpccsol} function for solving QPCCs, which in turn uses \texttt{Ipopt}~\cite{Waechter2006} to solve the relaxed subproblems.\footnote{The code for the examples is available at \url{https://github.com/nurkanovic/sqpcc_examples}.}
The iterates converge to the MPCC solution $\bar w$, and after identifying the correct branch, the BNLP is a QP, so convergence is completed in one additional iteration; see the middle and right plots in Fig.~\ref{fig:leyffer2007_convergence}.

\begin{figure}[t]
	\centering
	\includegraphics[width=0.95\textwidth]{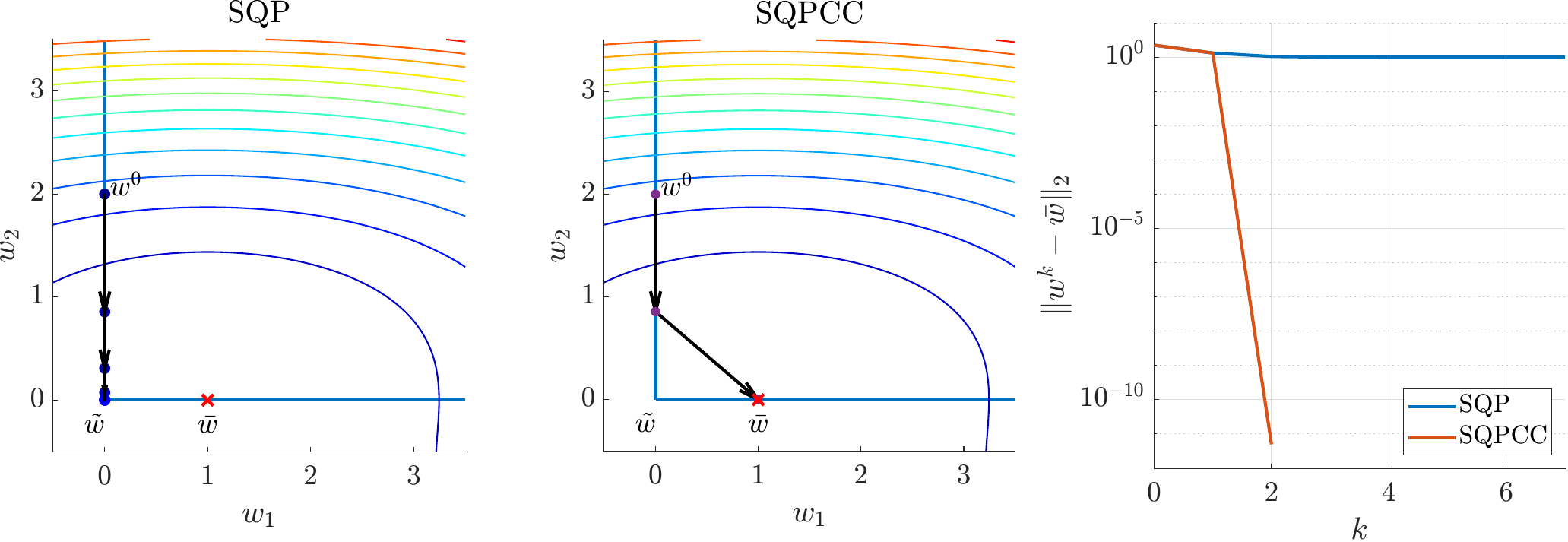}
	\caption{The left plot illustrates the iterates of the exact-Hessian SQP method, and the middle plot those of the exact-Hessian SQPCC method in the $(w_1,w_2)$-plane. 
	The initial point in both cases is $w^0 = (0,2)$.
	The right plot shows the errors over the iterations for two methods; the SQPCC method converges to $\bar w$, and SQP does not.
	}
	\label{fig:leyffer2007_convergence}
	\vspace{-0.5cm}
\end{figure}

The mechanism behind this behavior is explained as follows. 
For $t>0$, the QPCC subproblem at $(0,t)$ reads:
\begin{mini*}[2]
	{\substack{\Delta w \in \R^{2}}}{-2 \Delta w_1 + (2t+3t^2)\Delta w_2 + \Delta w_1^2 + (1+3t)\Delta w_2^2}
	{}{}
	\addConstraint{0 \leq \Delta w_1 \;\perp\; t+\Delta w_2 \geq 0.}
\end{mini*}

Inspection of the two branch QPs shows that there are two S-stationary points:
\begin{equation*}
	\Delta w^{(1)} =
	\left(
	0,\;
	-\frac{2t+3t^2}{2(1+3t)}
	\right),
	\qquad
	\Delta w^{(2)} = (1,-t).
\end{equation*}
The solution $\Delta w^{(1)}$ produces the stalling behavior discussed above, whereas $\Delta w^{(2)}$ yields convergence as predicted by Theorem~\ref{th:sqpcc_local_convergence} and observed in our numerical experiment. 
This highlights an advantage of SQPCC over SQP applied to the NLP reformulation, since SQP cannot recover the step $\Delta w^{(2)}$.

\subsection{On excluding spurious sequences}\label{sec:sqpcc_tight}
The previous example also clarifies when nonconvergent sequences of QPCC S-stationary points may occur. 
The point $\tilde w$ is a local minimizer of the BNLP with $w_1 = 0$ and $w_2 \geq 0$, and Part~III of the proof shows that such points may act as local attractors. 
These SQP iterates for this BNLP are valid SQPCC iterates, since they are S-stationary points of the corresponding QPCCs. 
These iterates, however, identify the complementarity active set at $\tilde w$ only asymptotically because the active constraint is weakly active.

We therefore conclude that if there exists a local minimizer of a BNLP with a weakly active degenerate complementarity constraint that violates the conditions of S-stationarity, then there may exist a sequence of SQPCC iterates that does not converge to an S-stationary point.
Theorem~\ref{th:sqpcc_local_convergence} shows, however, that this does not exclude the simultaneous existence of another locally convergent SQPCC sequence.
This behavior can be excluded by imposing an additional assumption on $\tilde w$.

\begin{corollary}\label{cor:no_spurious_sequences}
	Suppose that the assumptions of Theorem~\ref{th:sqpcc_local_convergence} hold at the S-stationary point $\bar z = (\bar w,\bar\lambda,\bar\mu,\bar\xi,\bar\nu)$ of the MPCC~\eqref{eq:mpcc_intro}.
	Assume, in addition, that every non-S-stationary point
	$\tilde z = (\tilde w,\tilde\lambda,\tilde\mu,\tilde\xi,\tilde\nu)$
	sufficiently close to $\bar w$ satisfies
	\begin{align}\label{eq:no_spurios_condition}
		\tilde \xi_i \neq 0
		\quad\text{and}\quad
		\tilde \nu_i \neq 0
		\qquad \forall i \in \I_{00}(\tilde w).
	\end{align}
	Then there exists no sequence of SQPCC iterates starting in $U$ that can converge to a non-S-stationary point $\tilde w$.
	If, in addition, all QPCC iterates are chosen as S-stationary points contained in the neighborhood $U$ from Theorem~\ref{th:sqpcc_local_convergence}, then every such sequence converges to $\bar z$.
\end{corollary}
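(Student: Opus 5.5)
We argue by contradiction, following the mechanism identified in Section~\ref{sec:leyffer}. Suppose a sequence of SQPCC iterates $\{z^k\}$, each $z^{k+1}$ an S-stationary point of the QPCC~\eqref{eq:qpcc}, starts in $U$ and converges to a non-S-stationary point $\tilde w$ (with limiting multipliers $\tilde z$). The first step is a branch-selection argument. The complementarity index sets $(\I^{\mathrm{qp}}_{0+},\I^{\mathrm{qp}}_{+0},\I^{\mathrm{qp}}_{00})$ of the iterates take only finitely many values, so along a subsequence they are constant. Each QPCC S-stationary point $z^{k+1}$ is then a KKT point of the branch QP~\eqref{eq:branch_qp} for some fixed $\tilde\I\in\mathcal P(\tilde w)$. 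Choose $\tilde\I$ to put the QP-degenerate indices on the side where the S-stationary signs are kept, as in Part~II of the proof of Theorem~\ref{th:sqpcc_local_convergence}. Passing to the limit in these KKT systems, and using MPCC--LICQ at $\tilde w$ (which is inherited near $\bar w$) to bound the multipliers, shows that $\tilde z$ is a KKT point of $\BNLP_{\tilde\I}$. The limiting multipliers inherit the sign constraints of the branch. For every $i\in\I_{00}(\tilde w)$, the inequality multiplier of the branch (say $\tilde\nu_i$ if $i\in\tilde\I$) is therefore $\geq 0$.

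The second step, which is the core, uses~\eqref{eq:no_spurios_condition}. Since $\tilde w$ is not S-stationary, some $j\in\I_{00}(\tilde w)$ has $\tilde\xi_j<0$ or $\tilde\nu_j<0$. The branch sign constraint forbids this for the inequality multiplier, so the violation sits on the equality multiplier of that branch, for example $\tilde\xi_j<0$ with $j\in\tilde\I$. By~\eqref{eq:no_spurios_condition} we also have $\tilde\nu_j\neq 0$, and hence $\tilde\nu_j>0$. The inequality $H_j\geq 0$ is therefore \emph{strictly} active at $\tilde w$ for $\BNLP_{\tilde\I}$.

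Next we invoke the active-set stabilization argument. Corollary~\ref{lem:active_set_stabilization} applies to the perturbed problem~\eqref{eq:perturbed_nlp} built from $\BNLP_{\tilde\I}$, and its perturbation $r^k\to 0$ by~\eqref{eq:kappa_omega_rk_bound}; this is exactly the mechanism of Proposition~\ref{th:active_set_stabilization}. It follows that for all large $k$ the linearized constraint $H_j(w^k)+\nabla H_j(w^k)^\top\Delta w\geq 0$ is active in the QP solution, and by continuity $\xi_j^{k+1}<0$. The pair $j$ is then QP-degenerate with a negative multiplier, contradicting S-stationarity of $z^{k+1}$ for the QPCC. Hence no such sequence exists. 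This is precisely the case that the counterexample of Section~\ref{sec:leyffer} exploits, where $\tilde\nu=0$ and the constraint is only weakly active, so it can stay inactive forever.

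The main obstacle is this stabilization step. Proposition~\ref{th:active_set_stabilization} was proved for SQP iterates converging to a regular KKT point. Here we only know that the iterates converge to $\tilde z$, and we need the Lipschitz stability of $\BNLP_{\tilde\I}$ at $\tilde z$. I would try first to avoid second-order assumptions at $\tilde w$. The plan is to apply continuity directly: with $\tilde\nu_j>0$, the QP multipliers $\nu_j^{k+1}\to\tilde\nu_j>0$ along the subsequence, so complementarity in the QP forces the constraint to be active for large $k$. No Lipschitz estimate is needed for this. Multiplier convergence itself follows from LICQ of the QP constraints near $\tilde w$.

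For the second claim, suppose all iterates are S-stationary QPCC points in $U$. The sequence is bounded, and by the contraction estimates of Parts~I--III of the proof of Theorem~\ref{th:sqpcc_local_convergence} any accumulation point is a stationary point of some branch. By the first claim this point cannot be non-S-stationary. By isolatedness of $\bar w$ in $\B_\sigma(\bar w)$ (\cite[Theorem~1]{Hu2002}) it must therefore be $\bar w$. Once the iterates enter $\B_\varepsilon(\bar z)$, the contraction~\eqref{eq:sqpcc_contraction_estimate} gives convergence of the whole sequence to $\bar z$.
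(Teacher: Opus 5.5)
Your argument for the first claim is sound, and it is leaner than the paper's. The paper uses Part~III to say that $\tilde w$ must be a local minimizer of some branch NLP. It then applies Proposition~\ref{th:active_set_stabilization} at $\tilde z$, which requires LICQ and SSOSC for that branch at $\tilde w$, to get finite identification of the biactive pairs. Only after that does it use continuity to produce a wrong-signed multiplier. You get the same contradiction from convergence of the multipliers alone, so no second-order condition at $\tilde w$ is needed. You can even drop the branch selection. S-stationarity of each QPCC iterate means that $\xi_j^{k+1}\nu_j^{k+1}\neq 0$ forces $\xi_j^{k+1}>0$ and $\nu_j^{k+1}>0$. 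Under \eqref{eq:no_spurios_condition} both limits are nonzero, so $\tilde\xi_j,\tilde\nu_j>0$ for all $j\in\I_{00}(\tilde w)$, i.e.\ $\tilde w$ would be S-stationary. For the multiplier bound, note that $H^k$ depends on the previous multipliers, so boundedness needs a short recursive estimate using $\Delta w^k\to 0$; this is routine.

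The second claim has a genuine gap, in four places.
(i) A sequence that merely stays in $U$ need not satisfy $w^{k+1}-w^k\to 0$. Its accumulation points therefore need not be feasible, let alone stationary for some branch. Invoking ``the contraction estimates of Parts~I--III'' here is circular: those estimates hold only for branch-QP solutions generated from iterates already inside the convergence ball of a regular branch KKT point.
(ii) Your first claim uses convergence of the whole primal--dual sequence, so it does not exclude a non-S-stationary \emph{accumulation} point.
(iii) The isolatedness from \cite{Hu2002} used in Part~III is uniqueness among local minimizers in $\B_\sigma(\bar w)$, not among S-stationary points.
(iv) The estimate \eqref{eq:sqpcc_contraction_estimate} was proved for the constructed sequence, not for an arbitrary selection of QPCC S-stationary points.

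The missing step is what the paper's remark that the proof ``constructs all QPCC S-stationary points in $U$'' rests on. Suppose $z^k$ and $w^{k+1}$ are close to $\bar z$ and $\bar w$. Then any S-stationary point $w^{k+1}$ of the QPCC is a KKT point of the branch QP~\eqref{eq:branch_qp} whose branch is read off from the QP index sets. Biactive pairs can be placed on either side, because both of their multipliers are nonnegative. Continuity at the nondegenerate pairs of $\bar w$ forces this branch into $\mathcal P(\bar w)$. Since $H^k$ is positive definite and the constraints are linear, $w^{k+1}$ is then the unique solution of that branch QP. Hence the contraction of Theorem~\ref{th:sqp_local_convergence} applies to every admissible selection. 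This gives invariance of the ball and convergence of every such sequence directly, with no compactness or accumulation-point argument.
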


\textit{Proof.}
By Theorem~\ref{th:sqpcc_local_convergence}, there exists a neighborhood $U$ of $\bar z$ such that, for every initial point $z^0 \in U$, there exists a sequence of QPCC S-stationary points in $U$ converging to $\bar z$.

It remains to exclude convergence to non-S-stationary points.
By Part~III of the proof of Theorem~\ref{th:sqpcc_local_convergence}, a sequence of SQPCC iterates can converge to a non-S-stationary point $\tilde w$, only if it is a local minimizer of an associated branch NLP and the corresponding SQP iterates yield QPCC S-stationary points for each $k$.
This is only possible if $\I^{\mathrm{qp}}_{00}(w^k) \cap \I_{00}(\tilde w) = \emptyset$ for all $k$, since otherwise the conditions of S-stationarity for the QPCC would be violated.
Biactive complementarity constraints are weakly active constraints in the BNLP, and if \eqref{eq:no_spurios_condition} holds, then by Proposition~\ref{th:active_set_stabilization}, there exists a $\rho^{{s}}>0$ and a finite $K$ such that, for all $w^K \in \mathcal{B}_{\rho^{{s}}}(\tilde w)$, it follows that $\I^{\mathrm{qp}}_{00}(w^{K+1}) = \I_{00}(\tilde w)$.
Further, by Corollary~\ref{th:fiacco_nlp_sensitivity}, $w^{K+1}$ must violate the conditions of S-stationarity; compare Fig.~\ref{fig:partIII_regions}(b) and (c) for an illustration of the argument.
Therefore, there cannot exist a sequence of iterates converging to $\tilde w$.

For the second claim, let $\{z^k\} \subset U$ be any sequence of QPCC iterates chosen as S-stationary points.
By the first part, such a sequence cannot converge to a non-S-stationary point.
The proof of Theorem~\ref{th:sqpcc_local_convergence} explicitly constructs all QPCC S-stationary points in $U$, and since for any $z^k \in U$ it also follows that $z^{k+1} \in U$, all such sequences converge to $\bar z$.
\qed

The M-stationary point $\tilde w$ in the example~\eqref{eq:leyffer2007} violates the conditions of Corollary~\ref{cor:no_spurious_sequences} and a spurious sequence exists.
On the other hand, in Example~\ref{ex:sqpcc_local_convergence} the C-stationary  point $\tilde w$ has no zero multiplier, and no spurious sequence exists.
In addition, such cases can be excluded entirely from the local convergence analysis by restricting attention to the smaller local convergence region determined in Parts~I and II of the proof of Theorem~\ref{th:sqpcc_local_convergence}.
Note that this smaller region still allows for complementarity active-set changes and requires no ULSC.
In practice, the assumptions of Corollary~\ref{cor:no_spurious_sequences} can be satisfied by warm-starting a local QPCC solver so that the selected S-stationary points remain in $U$, since for every $z^k \in U$ there exists a corresponding $z^{k+1} \in U$.

Finally, we note that under ULSC, by applying MPCC perturbation theory to the QPCC~\cite[Theorem~11]{Scheel2000}, in analogy with applying Theorem~\ref{th:fiacco_nlp_sensitivity} to Eq.~\eqref{eq:perturbed_nlp}, one can conclude that the QPCC solutions close to an S-stationary point are locally unique.
This case was already covered in~\cite{Scholtes2004}, but in contrast to the more general Theorem~\ref{th:sqpcc_local_convergence}, it does not allow for complementarity active-set changes.

Without ULSC, local uniqueness cannot be expected, but, as covered by Theorem~\ref{th:sqpcc_local_convergence} and Corollary~\ref{cor:no_spurious_sequences}, this does not cause difficulties.
An S-stationary point of an MPCC is a KKT point of each associated branch NLP, and, as the proofs show, in the SQPCC method it can be approached along any of these branches, which leads to nonunique convergent sequences.
This is illustrated in the following example.

\begin{example}
	Consider the MPCC
	\begin{align}\label{eq:mpcc_active_set}
		\min_{w\in \R^2}\; w_1^4 + w_1^2 + w_2^4 + w_2^2
		\quad \mathrm{s.t.} \quad
		0 \le w_1 \perp w_2 \ge 0.
	\end{align}
	This problem has a unique S-stationary point $\bar w = (0,0)$ with multipliers $\bar \xi = 0$ and $\bar \nu = 0$.
	Initializing at $w^0 = (t,0)$ or $w^0 = (0,t)$ for sufficiently small $t$ yields, on the two branches, the QPCC S-stationary point sequences
	\[
	w_a^{k+1} = \left(\frac{4(w_1^k)^3}{6(w_1^k)^2+1},\,0\right),
	\qquad
	w_b^{k+1} = \left(0,\,\frac{4(w_2^k)^3}{6(w_2^k)^2+1}\right).
	\]
	Both sequences converge to $\bar w$.
	Furthermore, at each iteration the QPCC subproblem admits S-stationary points on both branches, and any sequence obtained by selecting one of these branch-generated iterates at each step is a valid SQPCC iteration sequence and converges to $\bar w$.
	Note that the optimal active set is identified only in the limit; this is discussed further in the next section.
\end{example}

\subsection{Active-set stabilization for the SQPCC method}\label{sec:sqpcc_active_set}

At the solution $\bar w$, the active sets of the QPCC and the MPCC agree.
However, the proof of the local convergence theorem reveals that SQPCC iterates $z^{k} \in \B_{\varepsilon}(\bar z) \subseteq U$ satisfy:
\begin{align*}
	&\I_{0+}^{\mathrm{qp}}(w^k) \subseteq \I_{0+}(\bar{w}) \cup \I_{00}(\bar{w}),\\
	&\I_{+0}^{\mathrm{qp}}(w^k) \subseteq \I_{+0}(\bar{w}) \cup \I_{00}(\bar{w}),\\
	&\I_{00}^{\mathrm{qp}}(w^k) \subseteq \I_{00}(\bar{w}) \cup \I_{+0}(\bar{w}) \cup \I_{0+}(\bar{w}).
\end{align*}
It is natural to expect that the QPCC active sets stabilize in some sense.
We derive a corresponding active-set stabilization result for SQPCC, analogous to Proposition~\ref{th:active_set_stabilization}.
The proof again exploits the piecewise structure of the MPCC and essentially applies Proposition~\ref{th:active_set_stabilization} to each $\BNLP_\I$.

Using the PULSC condition (cf. Def.\ref{def:pulsc}), we split the set of degenerate indices into
$\I_{00}^0(\bar w, \bar \xi, \bar \nu) = \{i \in \I_{00}(\bar w) \mid \bar \xi_i = 0 \textnormal{ and } \bar \nu_i  = 0 \}$, and 
$\I_{00}^+(\bar w, \bar \xi, \bar \nu) = \{i \in \I_{00}(\bar w) \mid \bar \xi_i > 0 \textnormal{ or } \bar \nu_i  > 0 \}$.

\begin{proposition}[SQPCC active-set stabilization]\label{th:active_set_stabilization_sqpcc}
	Suppose that the assumptions of Theorem~\ref{th:sqpcc_local_convergence} hold.
	Denote by $z^{k}$ the iterates of the SQPCC method, where $w^k$ is an S-stationary point of the QPCC~\eqref{eq:qpcc}, and 
	$(\lambda^k,\mu^k, \xi^k, \nu^k)$ are the associated multipliers.
	Let $z^0 = (w^0,\lambda^0,\mu^0, \xi^0, \nu^0 )\in U$.
	Then there exist constants $\rho^{\mathrm{s}}_1>0$ and $\rho^{\mathrm{s}}_2>0$ and a sequence of SQPCC iterates such that, after finitely many iterations: 
	\begin{enumerate}[(i)]
		\item the iterates satisfy
		$
		z^{k} \in \mathcal{B}_{\rho^{\mathrm{s}}_1}(\bar z) \cap U,
		$
		and the inequality constraints active set stabilizes in the sense that
		\begin{align}\label{eq:sqpcc_active_set_stabilization_std}
			\A_+(\bar w,\bar\mu)= \A_+(w^k,\mu^k),
			\qquad
			\A_+(\bar w,\bar{\mu}) \subseteq  \A(w^k)\subseteq
			\mathcal{A}_+(\bar w,\bar\mu)\cup\mathcal{A}_0(\bar w,\bar\mu).
		\end{align}
		
		If, in addition, strict complementarity holds at $\bar w$, then
		$
		\mathcal{A}(w^k)=\mathcal{A}(\bar w).
		$
		\item The iterates satisfy
		$
		z^{k} \in \mathcal{B}_{\rho^{\mathrm{s}}_2}(\bar z) \cap U,
		$
		and the complementarity constraints active set stabilizes in the sense that:
		\begin{subequations}\label{eq:sqpcc_active_set_stabilization_comp}
			\begin{align}
				&\I_{0+}(\bar w)\subseteq  \I_{0+}^{\mathrm{qp}}(w^k) \subseteq \I_{0+}(\bar w) \cup \I_{00}^0(\bar w, \bar \xi, \bar \nu), \label{eq:sqpcc_active_set_stabilization_comp_nondeg_0}\\
				&\I_{+0}(\bar w)\subseteq  \I_{+0}^{\mathrm{qp}}(w^k) \subseteq \I_{+0}(\bar w) \cup \I_{00}^0(\bar w, \bar \xi, \bar \nu), \label{eq:sqpcc_active_set_stabilization_comp_nondeg_1}\\
				&\I_{00}^+(\bar w, \bar \xi, \bar \nu) \subseteq \I_{00}^{\mathrm{qp}}(w^k) \subseteq \I_{00}^+(\bar w, \bar \xi, \bar \nu) \cup  \I_{00}^0(\bar w, \bar \xi, \bar \nu) \label{eq:sqpcc_active_set_stabilization_comp_deg}.
			\end{align}
		\end{subequations}
		If in addition the PULSC condition holds at $\bar z$, then $\I_{00}(w^k) = \I_{00}^{\mathrm{qp}}(w^k)$. 
	\end{enumerate}
	If $z^{k} \in \mathcal{B}_{\rho^{\mathrm{s}}_2}(\bar z) \cap \mathcal{B}_{\rho^{\mathrm{s}}_1}(\bar z) \cap U$, then both estimates~\eqref{eq:sqpcc_active_set_stabilization_std} and \eqref{eq:sqpcc_active_set_stabilization_comp} hold true.
\end{proposition}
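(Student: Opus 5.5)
The plan is to reduce everything to Proposition~\ref{th:active_set_stabilization} applied to a single branch NLP, in the same way the proof of Theorem~\ref{th:sqpcc_local_convergence} reduces convergence to Theorem~\ref{th:sqp_local_convergence}.

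\textbf{Choice of sequence.} By Theorem~\ref{th:sqpcc_local_convergence} (Parts~I--II, and Part~III after finitely many iterations $K$ with $z^K\in\B_{\varepsilon}(\bar z)$), there is a sequence of QPCC S-stationary points that, from some finite index on, are solutions of the branch QP~\eqref{eq:branch_qp} for some $\I\in\mathcal P(\bar w)$. Since $\mathcal P(\bar w)$ is finite, the branch may change with $k$. Every such QP is the piecewise linearization of $\BNLP_{\I}$, whose KKT point is $\bar z$ with the same multipliers for all $\I$ (Proposition~\ref{prop:uniqe_mpcc_multipliers}). As in the proof of Theorem~\ref{th:sqp_local_convergence}, $z^{k+1}=z^{\I}(r^k)$ is the solution map of the perturbed $\BNLP_\I$. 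The residual bound \eqref{eq:kappa_omega_rk_bound} is uniform in $\I$, so $r^k\to0$.

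\textbf{Part (i).} For each $\I$, apply Proposition~\ref{th:active_set_stabilization} to $\BNLP_\I$. This gives radii $\rho^{\mathrm s,\I}$, built from $\bar\mu_i/L_{\varphi_i}$ and $-g_i(\bar w)/L_{\varphi_i}$ through \eqref{eq:epsilon_cond}--\eqref{eq:epsilon_cond_inactive}. Set $\rho^{\mathrm s}_1:=\min_{\I}\rho^{\mathrm s,\I}$, which is positive because the minimum is over a finite set. Once $z^k\in\B_{\rho^{\mathrm s}_1}(\bar z)$, which happens after finitely many steps by the contraction \eqref{eq:sqpcc_contraction_estimate}, the inequality constraints $g$ satisfy \eqref{eq:sqpcc_active_set_stabilization_std} regardless of which branch is used. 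Under strict complementarity, $\A_0(\bar w,\bar\mu)=\emptyset$ gives equality.

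\textbf{Part (ii).} Treat the complementarity pairs as inequality constraints of $\BNLP_\I$ and sort them into four cases.
\begin{enumerate}[(a)]
\item For $i\in\I_{0+}(\bar w)$, the constraint $H_i\ge0$ is inactive with slack $H_i(\bar w)>0$. The switching-function argument of Corollary~\ref{lem:active_set_stabilization} keeps the linearized $H_i$ positive, so $i\in\I^{\mathrm{qp}}_{0+}(w^k)$. The case $\I_{+0}$ is symmetric. This gives the left inclusions in \eqref{eq:sqpcc_active_set_stabilization_comp_nondeg_0}--\eqref{eq:sqpcc_active_set_stabilization_comp_nondeg_1}.
\item For $i\in\I_{00}^+$, say $\bar\xi_i>0$, pick any branch. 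If $i\in\I^{\mathrm C}$, the inequality $G_i\ge0$ is strictly active and stays active with $\xi_i^{k+1}>0$. If $i\in\I$, then $G_i=0$ is an equality, and $\xi_i^{k+1}$ stays near $\bar\xi_i>0$. In the latter case the inequality $H_i\ge0$ has multiplier $\bar\nu_i$.
\item The delicate sub-case of (b) is $\bar\nu_i=0$ with $i\in\I$: the inequality $H_i\ge0$ is only weakly active, so it need not become active. I would handle this with the branch freedom. Part~II of the proof of Theorem~\ref{th:sqpcc_local_convergence} shows that the branch with $i\in\I^{\mathrm C}$ is also admissible, and there $G_i\ge0$ is strictly active while $H_i=0$ is an equality. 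Hence we may \emph{choose} the sequence, as the existence statement permits, so that both linearized functions vanish and $i\in\I^{\mathrm{qp}}_{00}(w^k)$. This gives $\I_{00}^+\subseteq\I^{\mathrm{qp}}_{00}$.
\item For $i\in\I^0_{00}$, both multipliers vanish and the constraints are weakly active in every branch. The pair can therefore land in any of the three sets, which accounts for the right-hand unions.
\end{enumerate}
Take $\rho^{\mathrm s}_2$ as the minimum over branches and indices of the corresponding radii. Under PULSC, $\I^0_{00}=\emptyset$ and the sets coincide. The final joint claim follows by intersecting the two balls.

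\textbf{Main obstacle.} The hardest step is case (c), a degenerate index with one zero multiplier. There the inclusion $\I_{00}^+\subseteq\I_{00}^{\mathrm{qp}}$ holds only for a suitably chosen sequence, not for every sequence. I would lean on the phrase ``there exists \ldots a sequence'' in the statement. I would also check that switching branches between iterations keeps the uniform contraction, using $\gamma=\max_\I\gamma^\I$ as in Part~I.
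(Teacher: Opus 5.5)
Your route is the paper's. You apply Proposition~\ref{th:active_set_stabilization} to each $\BNLP_\I$, so inactive nondegenerate constraints and strictly active biactive constraints are identified finitely, and weakly active ones only asymptotically. Parts (i), (ii)(a), (ii)(d), and (ii)(b) for indices with $\bar\xi_i>0$ and $\bar\nu_i>0$ are fine.

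The gap is (c), and it cannot be closed. Part~II of the proof of Theorem~\ref{th:sqpcc_local_convergence} never shows that the branch with $i\in\I^{\mathrm C}$ is admissible when $\bar\xi_i>0=\bar\nu_i$. It only treats the opposite branch, where the zero-multiplier constraint $H_i\ge0$ is the inequality. In your branch, $H_i=0$ is an \emph{equality*}, and its multiplier $\nu_i^{k+1}\to\bar\nu_i=0$ has no sign control. If $\nu_i^{k+1}<0$, the branch-QP point is biactive with a negative multiplier, hence not S-stationary for the QPCC. The other branch must then have $H_i(w^k)+\nabla H_i(w^k)^\top\Delta w>0$. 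Otherwise its KKT point, with $\nu_i\ge0$ and $\xi_i$ near $\bar\xi_i>0$, would also be the unique KKT point of your branch (by strict convexity and LICQ), contradicting $\nu_i^{k+1}<0$. So at such an iterate no QPCC S-stationary point places $i$ in $\I^{\mathrm{qp}}_{00}$, and the existence quantifier cannot rescue the inclusion.

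This does happen. Add $w_1$ to the objective of \eqref{eq:mpcc_active_set}, i.e.\ take $f(w)=w_1+w_1^2+w_1^4+w_2^2+w_2^4$ with $0\le w_1\perp w_2\ge0$. At $\bar w=0$ one has $\bar\xi=1$ and $\bar\nu=0$, so $\I_{00}^+=\{1\}$ and PULSC holds. At $w^k=(0,t)$ with $t>0$:
\begin{itemize}
\item the branch with linearized $H=0$ gives $\Delta w=(0,-t)$ with $\nu^{k+1}=-8t^3<0$;
\item the other branch gives $w^{k+1}=(0,4t^3/(1+6t^2))$.
\end{itemize}
So the QPCC has a unique S-stationary point. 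Every SQPCC sequence from $z^0=(w^0,\xi^0,\nu^0)=((0,t),1,0)$ therefore satisfies $\I^{\mathrm{qp}}_{0+}(w^k)=\{1\}$ and $\I^{\mathrm{qp}}_{00}(w^k)=\emptyset$ for all $k\ge1$, even though it converges to $\bar z$. This violates \eqref{eq:sqpcc_active_set_stabilization_comp_deg} and the upper bound in \eqref{eq:sqpcc_active_set_stabilization_comp_nondeg_0}. It also violates the intended PULSC conclusion $\I^{\mathrm{qp}}_{00}(w^k)=\I_{00}(\bar w)$.

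The paper's proof has the same hole. It calls the relevant constraints strictly active for all $i\in\I_{00}^+$, which is true only in the branch you picked. Its PULSC step turns the zero-multiplier constraint into an equality without checking that multiplier's sign. What your argument does prove is the statement with $\I_{00}^+$ replaced by $\{i\in\I_{00}(\bar w)\mid \bar\xi_i>0,\ \bar\nu_i>0\}$, and with the indices having exactly one zero multiplier moved into the right-hand unions.
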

\textit{Proof.}
Let $z^{k} \in \mathcal B_{\varepsilon}(\bar z) \subseteq U$.
Then, by Theorem~\ref{th:sqpcc_local_convergence}, there exist an S-stationary point of the QPCC that is also a solution of the QP~\eqref{eq:branch_qp}.
Consequently, we may apply Proposition~\ref{th:active_set_stabilization} to any $\BNLP_\I$, which immediately yields claim~(i).

For part~(ii), we observe that, in any $\BNLP_\I$, the constraints $G_i(w) = 0,\ H_i(w) \geq 0$, $i \in \I \cap \I_{0+}(\bar w)$,
satisfy that the inequality constraint is inactive at the solution, that is, $H_i(\bar w) > 0$.
An analogous argument applies to the constraints $G_i(w) \geq 0,\ H_i(w) = 0$, $i \in \I^{\mathrm C} \cap \I_{+0}(\bar w)$.
It then follows from Proposition~\ref{th:active_set_stabilization} that there exists $\rho_2^{\mathrm s} > 0$ such that the inactive constraints are identified in a finite number of iterations, which yields the lower bounds in \eqref{eq:sqpcc_active_set_stabilization_comp_nondeg_0}~and~\eqref{eq:sqpcc_active_set_stabilization_comp_nondeg_1}.

Next, we observe in the constraints $G_i(w) = 0,\ H_i(w) \geq 0$, $i \in \I \cap \I_{00}(\bar w)$, or
$G_i(w) \geq 0,\ H_i(w) = 0$, $i \in \I^{\mathrm C} \cap \I_{00}(\bar w)$ in any $\BNLP_\I$, the inequality constraints are active at $\bar w$.
Moreover, for indices $i \in \I_{00}^+(\bar w,\bar\xi,\bar\nu)$, these constraints are strictly active and, by Proposition~\ref{th:active_set_stabilization}, are identified in finitely many iterations, which yields the lower bound~\eqref{eq:sqpcc_active_set_stabilization_comp_deg}.
The indices $i \in \I_{00}^0(\bar w,\bar\xi,\bar\nu)$ correspond to weakly active constraints in a $\BNLP_\I$ and may therefore be identified only asymptotically, which gives the upper bound in~\eqref{eq:sqpcc_active_set_stabilization_comp}.

Under PULSC, for each $i \in \I_{00}(\bar w)$ at least one of $\bar\xi_i$ and $\bar\nu_i$ is strictly positive.
Assuming without loss of generality that $\bar\nu_i > 0$ for all such indices and choosing $\I \in \mathcal P(\bar w)$ with $\I_{00}(\bar w) \subseteq \I$, the inequality constraint in  $G_i(w)=0$, $H_i(w)\geq0$ are strictly active at the solution $\bar w$, and hence finitely identified.
\qed

There is no a priori ordering between the stabilization radii $\rho_1^{\mathrm{s}}$ and $\rho_2^{\mathrm{s}}$.
The size of $\rho_1^{\mathrm{s}}$ is determined by the values of $g_i(\bar w)$ and $\bar\mu_i$ for the inequality constraints, while the size of $\rho_2^{\mathrm{s}}$ is determined by the values of $G_i(\bar w)$ and $H_i(\bar w)$ for $i \in \I_{0+}(\bar w) \cup \I_{+0}(\bar w)$, as well as by $\bar\xi_i$ and $\bar\nu_i$ for $i \in \I_{00}(\bar w)$.
If strict complementarity and PULSC hold, then there exists a converging SQPCC sequence for which all active sets are identified correctly after a finite number of iterations and, depending on the corresponding radii, possibly already at the initialization point.
Note that the nondegenerate indices $i \in \I_{0+}(\bar w) \cup \I_{+0}(\bar w)$ are always identified in finitely many iterations and yield the same inactive constraints in each $\BNLP_\I$. 
Only the degenerate indices may vary, and depending on the value of their optimal  MPCC--Lagrange multipliers, they may be identified either finitely or only asymptotically.

\section{Conclusions and future work}\label{sec:conclusion}
Sequential quadratic programming with complementarity constraints (SQPCC) is a natural extension of the classical sequential quadratic programming (SQP) method to mathematical programs with complementarity constraints (MPCCs), since it retains the complementarity structure directly in the quadratic subproblems.
This paper has presented a new local convergence result for SQPCC, strengthening earlier results while weakening their assumptions.
Our main result establishes local convergence of the basic SQPCC method to S-stationary points, under the assumptions of MPCC--SSOSC and MPCC--LICQ, where the latter can be relaxed directly to MPCC--SMFCQ.
In addition, we work under a Lipschitz assumption on the subproblem functions (the $\omega$-condition) and a compatibility assumption on the approximation error in the subproblem data (the $\kappa$-condition).
These assumptions provide an intuitive way to understand how the nonlinearity of the problem functions and the size of the approximation error influence both the local convergence region and the resulting convergence speed.

Our analysis builds on the local convergence theory of the SQP method.
As part of this development, compared to~\cite{TranDinh2012b}, we also established a local convergence result for SQP under weaker assumptions in the $\kappa$--$\omega$ setting.
In contrast to earlier results, the SQPCC theory developed here does not require strict complementarity-type assumptions, and allows standard and complementarity active-set changes along the iterates.
Moreover, the active-set stabilization results for SQP and SQPCC identify conditions on the Lagrange multipliers that characterize whether the optimal active set is identified in finitely many iterations or only asymptotically, both for standard inequality constraints and for complementarity constraints.

A distinctive feature of the SQPCC method is that non-S-stationary points may enlarge the local convergence region.
At the same time, this phenomenon can also lead to sequences converging to spurious solutions.
By revisiting the counterexample of Leyffer and Munson~\cite{Leyffer2007}, we showed that this does not contradict our local convergence result, and we identified conditions on the MPCC--Lagrange multipliers under which such spurious attracting sequences cannot occur.
In this way, the analysis clarifies both the additional flexibility and the additional subtleties that arise when complementarity structure is retained directly in the subproblems.

We also emphasize the relevance of SQPCC for dynamic optimization applications, in particular for model predictive control, as discussed in detail in ~\cite{Nurkanovic2026a}.
This application area is a major motivation for a better understanding of the convergence properties of SQPCC.
Unlike SQP methods applied to NLP reformulations of MPCCs, SQPCC iterates retain a direct relation to the computation of directional derivatives of the solution map of parametric MPCCs.
Furthermore, the possibility of discovering new complementarity active sets, as covered by our theory, plays an important role in dynamic optimization~\cite{Nurkanovic2026a}.

A natural direction for future work is to weaken the constraint qualification assumptions further, for example to MPCC-MFCQ.
As in the classical SQP setting~\cite{Wright1998}, we expect that such a development will require regularization of the QPCC subproblems.
Without MPCC-MFCQ, not all B-stationary points are S-stationary, and one may expect that convergence to B-stationary points of the MPCC will require the computation of B-stationary points of the QPCC subproblems.
This, in turn, may require crossover and active-set strategies within QPCC solvers of the type developed in~\cite{Kirches2022,Leyffer2007,Nurkanovic2025}.

\begin{acknowledgements}
This research was supported by DFG via projects 504452366 (SPP 2364), 560056112 (robust MPC), 535860958 (ALeSCo) and 525018088 (MAWERO), and by BMWK via 03EN3054B.
The author thanks M. Diehl for stimulating discussions on the local convergence of SQP methods, and A. E. Pozharskiy for the implementation of \texttt{mpccsol} in \texttt{nosnoc}.
\end{acknowledgements}


\end{document}